\theoremstyle{plain}
\newtheorem*{bigtheo}{Théorème}
\newtheorem{theo}{Théorème}[subsection]
\newtheorem{prop}[theo]{Proposition}\newtheorem{conv}[theo]{Convention}
\newtheorem{lemm}[theo]{Lemme}
\newtheorem{coro}[theo]{Corollaire}
\newtheorem{hypo}[theo]{Hypothèse}
\newtheorem{ex}[theo]{Exemple}
\theoremstyle{definition}
\newtheorem{defi}[theo]{Définition}
\theoremstyle{remark}
\newtheorem{rema}[theo]{Remarque}
\newtheorem*{bigrema}{Remarque}
\newcommand{\T}{\mathcal{T}}
\newcommand\NN{\mathbb{N}} % Ensemble des entiers naturels.
\newcommand\ZZ{\mathbb{Z}} % Ensemble des entiers relatifs.
\newcommand\QQ{\mathbb{Q}} % Ensemble des nombres rationnels.
\newcommand\FP{\mathbb{F}_p} 
\newcommand{\dans}{ \!\in\! }		
\DeclareMathOperator{\Spec}{Spec}
\DeclareMathOperator{\End}{End}
\DeclareMathOperator{\Hdg}{Hdg}
\DeclareMathOperator{\Newt}{Newt}
\DeclareMathOperator{\PR}{PR}
\DeclareMathOperator{\Gr}{Gr}
\DeclareMathOperator{\Fil}{Fil}
\DeclareMathOperator{\im}{Im}
\DeclareMathOperator{\Ker}{Ker}
\DeclareMathOperator{\Ha}{Ha}
\DeclareMathOperator{\Id}{Id}
\DeclareMathOperator{\id}{id}
\newcommand\fleche{\longrightarrow} 
\def\quotient#1#2{
  \raise0ex\hbox{$#1$}\big/\!\lower1ex\hbox{$#2$}
}
\title{Groupes $p$-divisibles avec condition de Pappas-Rapoport et invariants de Hasse
%Invariants de Hasse $\mu$-ordinaires et ramification
}
\author{St\'ephane Bijakowski}
\address{Imperial College,
Department of Mathematics,
180 Queen's Gate,
London SW7 2AZ,
UK}
\email{s.bijakowski@imperial.ac.uk}
\author{Valentin Hernandez}
\address{IMJ-PRG, Université Paris 6,
4 place Jussieu,
75005 Paris, 
France}
\email{valentin.hernandez@imj-prg.fr}
\begin{document}

\maketitle

\begin{abstract}
Nous étudions les groupes $p$-divisibles $G$ munis d'une action de l'anneau des entiers d'une extension finie (possiblement ramifiée) de $\mathbb{Q}_p$ sur un schéma de caractéristique $p$. Nous supposons de plus que le groupe $p$-divisible satisfait la condition de Pappas-Rapoport pour une certaine donnée $\mu$ ; cette condition consiste en une filtration sur le faisceau des différentielles $\omega_G$ satisfaisant certaines propriétés. Sur un corps parfait, nous définissons les polygones de Hodge et de Newton pour de tels groupes $p$-divisibles, en tenant compte de l'action. Nous montrons que le polygone de Newton est au-dessus du polygone de Hodge, lui-même au-dessus d'un certain polygone dépendant de la donnée $\mu$. \\
Nous construisons ensuite des invariants de Hasse pour de tels groupes $p$-divisibles sur une base arbitraire de caractéristique $p$. Nous prouvons que l'invariant de Hasse total est non nul si et seulement si le groupe $p$-divisible est $\mu$-ordinaire, c'est-à-dire si son polygone de Newton est minimal. Enfin, nous étudions les propriétés des groupes $p$-divisibles $\mu$-ordinaires. \\
La construction des invariants de Hasse s'applique en particulier aux fibres spéciales des modèles des variétés de Shimura PEL construits par Pappas et Rapoport. 
\end{abstract}

\selectlanguage{english}
\begin{abstract}
We study $p$-divisible groups $G$ endowed with an action of the ring of integers of a finite (possibly ramified) extension of $\mathbb{Q}_p$ over a scheme of characteristic $p$. We suppose moreover that the $p$-divisible group $G$ satisfies the Pappas-Rapoport condition for a certain datum $\mu$ ; this condition consists in a filtration on the sheaf of differentials $\omega_G$ satisfying certain properties. Over a perfect field, we define the Hodge and Newton polygons for such $p$-divisible groups, normalized with the action. We show that the Newton polygon lies above the Hodge polygon, itself lying above a certain polygon depending on the datum $\mu$. \\
We then construct Hasse invariants for such $p$-divisible groups over an arbitrary base scheme of characteristic $p$. We prove that the total Hasse invariant is non-zero if and only if the $p$-divisible group is $\mu$-ordinary, i.e. if its Newton polygon is minimal. Finally, we study the properties of $\mu$-ordinary $p$-divisible groups. \\
The construction of the Hasse invariants can in particular be applied to special fibers of PEL Shimura varieties models as constructed by Pappas and Rapoport.
\end{abstract}

\selectlanguage{french}

\tableofcontents

\section*{Introduction}

L'étude des congruences entre formes modulaires s'est avérée, au delà de son caractère esthétique, avoir de nombreuses conséquences importantes en théorie des nombres ; que 
ce soit l'article fondateur de Serre \cite{Se} d'où sont déduites les congruences entre les valeurs des fonctions z\^eta aux entiers négatifs, ou bien l'article de Deligne et Serre \cite{DS} qui utilise les congruences pour construire les représentations d'Artin associées aux formes modulaires de poids 1. Ces deux aspects ont depuis été grandement généralisés, et dans les cas cités précédemment ils sont basé sur l'utilisation de la série d'Eisenstein $E_{p-1}$ ($p\geq 5$ un nombre premier), puisque l'ideal qui définit (pour $SL_2(\ZZ)$) les congruences modulo $p$ entre formes modulaires est donné par $E_{p-1} - 1$.

On peut interpréter géométriquement ce qui précède, puisque $E_{p-1}$ est une section sur la courbe modulaire du fibré $\omega^{p-1}$. 
%On peut alors généraliser le cadre précédent en regardant plus généralement des variétés de Shimura, mais commençons par décrire le cas le plus simple, celui des courbes elliptiques (disons sur $\QQ$) et de la courbe modulaire.
Pour chaque nombre premier où une courbe elliptique (disons sur un corps de nombres) a bonne réduction, on peut regarder le nombre de points modulo $p$ (dans $\overline{\FP}$) de sa $p$-torsion. 
Une courbe elliptique (à bonne réduction en $p$) est dite supersingulière en $p$ si sa $p$-torsion n'a qu'un point sur 
$\overline{\FP}$, et ordinaire si elle en a $p$. Si la courbe elliptique est ordinaire, alors sa $p$-torsion est une extension d'une partie multiplicative par une partie étale. De plus, on peut associer à chaque courbe elliptique un invariant modulo $p$, dit de Hasse, décrivant l'action du 
Verschiebung sur le faisceau conormal. Cet invariant est inversible exactement lorsque la courbe elliptique est ordinaire (en $p$). En fait cet invariant est la réduction modulo $p$ de la série d'Eisenstein $E_{p-1}$ ; son lieu d'annulation étant alors un fermé de la courbe modulaire. L'invariant de Hasse est donc intimement lié aux congruences entre formes modulaires.

Plus généralement, étant donné un groupe $p$-divisible, on peut lui associer des polygones de Hodge et Newton et on dit qu'il est ordinaire lorsque ces deux polygones sont égaux. Cette condition est équivalente au fait que la $p$-torsion soit l'extension d'une partie multiplicative par une partie étale. Comme dans le cas des courbes elliptiques, on peut associer à $G$ un invariant de Hasse qui détecte si le groupe est ordinaire ou non. Étant donné une variété de Shimura (PEL non ramifiée en $p$), on peut alors grâce à ce qui précède construire un ouvert, le lieu ordinaire. Lorsqu'il est non vide, cet ouvert est dense (\cite{NO}). Le polygone de Newton permet plus généralement de construire une stratification de la variété.

En général, malheureusement, fixer des endomorphismes dans la donnée de Shimura peut impliquer que le lieu ordinaire est vide. Dans ce cas on doit construire de nouveaux 
invariants. Lorsque les endomorphismes sont non ramifiés au dessus de $p$, de nombreux outils permettent d'étudier de telles familles de groupes $p$-divisibles. En particulier, on peut définir les polygones de Newton et de Hodge normalisés avec l'action (étudiés dans \cite{Kot2} et \cite{RR}), et on définit le lieu $\mu$-ordinaire comme le lieu o\`u ces deux polygones sont égaux. Ce lieu, défini par Rapoport dans une conférence en 1996, permet donc de construire un ouvert, qu'il a conjecturé être dense pour les variétés de niveau hyperspécial en $p$ (voir \cite{Ra} pour plus de détails). Ce résultat a été prouvé par Wedhorn dans \cite{Wed}. Les groupes $p$-divisibles $\mu$-ordinaires sur $\overline{\FP}$ ont ensuite été étudiés dans \cite{Moo}, et l'ouvert des variétés de Shimura précédentes dans \cite{Wed}. De plus, Goldring et Nicole (\cite{GN}) ont introduit des invariants de Hasse dans le cas des variétés de Shimura unitaires, qui détectent l'ouvert $\mu$-ordinaire. 
%(\cite{Kot2,MV,WV,FarHN,Her1,AGlow,Bijpar}). 

En fait le problème précédent est local, détaillons ce qui est connu. Soit $S$ un schéma de caractéristique $p$, et $G\fleche S$ un groupe $p$-divisible muni d'une action d'une $\ZZ_p$-algèbre $\mathcal O$, qui est l'anneau des entiers d'une extension finie non ramifiée de $\mathbb{Q}_p$ . On peut alors associer à $G$ une famille (indéxée par les points géométriques de $S$) de polygones de Newton, ainsi qu'une famille de polygones de Hodge, qui tiennent compte de l'action de $\mathcal O$. Les polygones de Hodge sont localement constant sur $S$ (voir \cite{Kot2} ou les rappels de \cite{Her1}). L'ouvert $\mu$-ordinaire est alors le lieu où les polygones de Hodge et Newton coïncident.
On peut aussi associer à $G$ un invariant de Hasse $\mu$-ordinaire, qui est une section sur $S$ d'un fibré en droites, le déterminant du faisceau conormal de $G$ (à une certaine puissance explicite) ; voir \cite{Her1} pour la construction à l'aide de la cohomologie cristalline de $G$, ou \cite{KW} pour une construction utilisant les $G$-Zip.
Ces derniers invariants ont de nombreuses applications en dehors de la géométrie des variétés de Shimura (\cite{DS} dans le cas de la courbe modulaire, puis \cite{Box,GK} dans un cadre plus général les utilisent pour construire des représentations galoisiennes par interpolation).

Si on autorise l'anneau $\mathcal{O}$ à être ramifié, le problème devient plus difficile. Il semble alors nécessaire d'introduire les modèles entiers de Pappas-Rapoport à la place des modèles entiers de Kottwitz, car ces derniers ne sont pas plats. Sur ces modèles, on dispose d'un groupe $p$-divisible $G$ satisfaisant une condition de Pappas-Rapoport, c'est-à-dire une filtration sur le faisceau $\omega_G$ dont on fixe les dimensions des gradués. Dans le cas des variétés de Hilbert, les dimensions des gradués sont égaux à $1$ ; Reduzzi et Xiao ont alors montré que l'invariant de Hasse classique s'exprimait comme un produit d'invariants partiels. Cette construction a été généralisée dans \cite{Bijpar} au cas où la dimension des gradués est constante. Si cette dernière condition n'est pas vérifiée, l'invariant de Hasse est toujours nul (et le groupe $p$-divisible ne peut pas être ordinaire). C'est ce cas que nous proposons d'étudier.

Soit $p$ un nombre premier, et $L$ une extension finie de $\mathbb{Q}_p$. On note $O_L$ son anneau des entiers, $k_L$ le corps résiduel, $f$ le degré résiduel, $e$ l'indice de ramification, et $\mathcal{T}$ l'ensemble des plongements de $L^{nr}$ dans $\overline{\mathbb{Q}_p}$, où $L^{nr}$ est l'extension maximale non ramifiée contenue dans $L$. Soient enfin  $\mu = (d_{\tau,i})_{\tau \in \mathcal{T}, 1 \leq i \leq e}$ des entiers. On étudie alors des groupes $p$-divisibles $G$ sur un schéma $S$ de caractéristique $p$, munis d'une action de $O_L$ (i.e. un morphisme $\iota : O_L \to \End(G)$), et satisfaisant une condition de Pappas-Rapoport pour la donnée $(d_\bullet)$. Cette condition consiste en une filtration $\Fil^\bullet$ sur le faisceau $\omega_G$ dont les gradués sont de dimension $d_{\tau,i}$ (voir définition \ref{defPR}). 
On note $\underline G$ le groupe avec ces données.

Nos résultats sont alors les suivants (voir théorème \ref{thr118} et théorème \ref{thr119}),

\begin{bigtheo}
Soit $\underline G = (G,\iota,\Fil^{\bullet})$ un groupe $p$-divisible avec condition de Pappas-Rapoport $\mu$ sur un corps parfait $k$ de caractéristique $p$, muni d'une action de $O_L$ et d'une donnée de Pappas-Rapoport. On peut alors associer à $\underline G$ des polygones (convexes) de Newton $\Newt(\underline G)$, de Hodge $\Hdg(\underline G)$, et définir un polygone de Pappas-Rapoport $\PR(\mu)$. Ces polygones vérifient les inégalités,
\[ \Newt(\underline G) \geq \Hdg(\underline G) \geq \PR(\mu)\]
De plus, les polygones $\Newt(\underline G)$ et $\Hdg(\underline G)$ vérifient la propriété de filtration Hodge-Newton : si ces deux polygones ont un point de contact qui est un point de rupture pour $\Newt(\underline G)$ alors il existe un unique scindage $\underline G = \underline G' \oplus \underline G''$ correspondant à ce contact.
\end{bigtheo}

\begin{bigrema} En fait le théorème vaut plus généralement pour $\underline M = (M,F,\iota,\Fil^{\bullet})$ un $F$-cristal sur $k$. 
\end{bigrema}

Dans le cas où $O_L = \ZZ_p$ (la condition de Pappas-Rapoport est alors triviale), le théorème précédent est le théorème de Mazur ainsi que le théorème de Katz 
sur la rupture Hodge-Newton (voir \cite{Katz}). Ces résultats ont été généralisés dans de nombreux cas (voir \cite{RR, MV, Her1}), mais pas à notre connaissance dans les 
cas ramifiés.

Un groupe $p$-divisible $\underline G$ sur un corps parfait $k$ de caractéristique $p$ est alors dit $\mu$-ordinaire si $\Newt(\underline G) = \PR(\mu)$. Si la donnée $\mu = (d_{\tau,i})_{\tau,i}$ est ordinaire, i.e. si le polygone PR a pour pentes $0$ ou $1$, on retrouve la condition d'ordinarité usuelle. En revanche, si ce n'est pas le cas, la condition de Pappas-Rapoport induit une obstruction à être ordinaire. 
On dit que $\underline G$ vérifie la condition de Rapoport généralisée si $\Hdg(\underline G) = \PR(\mu)$. Dans le cas d'une donnée $\mu = (d_{\tau,i})_{\tau,i}$ ordinaire, on retrouve la condition de Rapoport usuelle (i.e. le module $\omega_G$ est libre sur $k \otimes_{\mathbb{Z}_p} O_L$). \\
$ $\\
\indent La deuxième partie de cet article est consacrée à la construction d'invariants de Hasse. Pour cela la construction est purement locale, et est valable pour les groupes de Barsotti-Tate tronqués d'échelon 1 (voir quand même la remarque \ref{remBTO1}). Cela nous permet aussi de caractériser la $\mu$-ordinarité.
De plus, on a d'autres caractérisations possibles du fait d'être $\mu$-ordinaire dans le cas d'un corps algébriquement clos ; on peut construire explicitement un groupe $p$-divisible $X^{ord}$ qui est $\mu$-ordinaire (voir définition \ref{def33}) auquel tout groupe $p$-divisible $\mu$-ordinaire est isomorphe. Enfin, le fait d'être $\mu$-ordinaire ne dépend que de la $p$-torsion, généralisant ainsi les résultats de Moonen (\cite{Moo}). \\
Notre théorème est le suivant (voir proposition \ref{prop215}, théorème \ref{equimuord} et proposition \ref{pro38}).

\begin{bigtheo}
Supposons que pour tout $\tau$, $d_{\tau,1} \geq d_{\tau,2} \geq \dots \geq d_{\tau,e}$. Soit $S$ un schéma sur $k_L$ de caractéristique $p$. 
Alors pour tout groupe $p$-divisible $G$ sur $S$, muni d'une action de $O_L$ et d'une condition de Pappas-Rapoport $(\omega_{G,\tau}^{[i]})_{\tau,i}$, il existe des applications compatibles au changement de base
\[ \Ha_{\tau}^{[i]}(G) : \det\left(\omega_{G,\tau}^{[i]}/\omega_{G,\tau}^{[i-1]}\right) \fleche \det\left(\omega_{G,\tau}^{[i]}/\omega_{G,\tau}^{[i-1]}\right)^{\otimes(p^f)}\]
De plus, lorsque $S = \Spec(k)$ avec $k$ un corps algébriquement clos de caractéristique $p$, $\Ha_{\tau}^{[i]}$ est inversible si et seulement si les polygones $\Newt(\underline G)$ et $\PR(\mu)$ ont un point de contact en l'abscisse $h-d_{\tau,i}$.
En particulier, si on définit ${^\mu}{\Ha} = \bigotimes_{\tau,i} \Ha_{\tau}^{[i]}$, alors  on a l'équivalence entre les propositions suivantes.
\begin{enumerate}
\item $G$ est $\mu$-ordinaire.
\item ${^\mu}{\Ha}(G)$ est inversible.
\item $G$ est isomorphe à $X^{ord}$.
\item $G[p]$ est isomorphe à $X^{ord}[p]$.
\end{enumerate}
\end{bigtheo}

\begin{bigrema}
En fait la construction des invariants précédents ne dépend que de la $p$-torsion du groupe $p$-divisible $G$. Plus précisément notre construction est valable pour un groupe de Barsotti-Tate tronqué d'échelon 1, muni d'une action de $O_L$, et dont le cristal de Dieudonné est un $\mathcal O_S\otimes_{\ZZ_p}O_L$-module libre (voir remarque \ref{remBTO1}).
\end{bigrema}

 La construction de ces invariants $\Ha_\tau^{[i]}$ est une généralisation de l'invariant de Hasse classique. Dans le cas non ramifié, ils sont aussi construit dans \cite{Her1}, dans \cite{KW} en utilisant les $G$-Zip et \cite{GN} dans le cas des variétés de Shimura en utilisant la cohomologie cristalline.
Ici, c'est l'étude combinatoire de la filtration de Pappas-Rapoport qui permet la construction de tels invariants. Dans \cite{Her1} la construction passe par la cohomologie cristalline et une division du Verschiebung sur le cristal ; ici on adopte une stratégie plus simple : les invariants de Hasse sont alors construit sur le cristal de Berthelot-Breen-Messing de $G$ (cf. \cite{BBM}) mais évalué sur l'épaississement tautologique $(S \fleche S)$, qui est tué par $p$, et l'étape de division du Verschiebung est remplacée par une utilisation astucieuse du Verschiebung et du Frobenius. \\
$ $\\
\indent Décrivons maintenant brièvement l'organisation de l'article. Dans la première partie, nous étudions les $F$-cristaux avec action, satisfaisant une condition de Pappas-Rapoport. Nous définissons les polygones de Hodge, de Newton et de Pappas-Rapoport, et prouvons des inégalités entre ces polygones. Nous prouvons également un analogue du théorème de décomposition Hodge-Newton. Dans la deuxième partie, nous définissons les invariants de Hasse pour un groupe $p$-divisible satisfaisant la condition de Pappas-Rapoport. Nous effectuons cette construction d'abord pour les groupes $p$-divisibles sur un corps parfait, puis dans le cas général. Enfin, nous donnons plusieurs caractérisations des groupes $p$-divisibles $\mu$-ordinaires dans la troisième partie.
\\
$ $

Les auteurs souhaitent remercier Laurent Fargues et Vincent Pilloni pour des échanges intéressants.

\section{Polygones}

\subsection{$F$-cristaux avec action}

Soit $k$ un corps parfait de caractéristique $p>0$, et soit $W(k)$ l'anneau des vecteurs de Witt de $k$. Soit $\sigma$ le Frobenius agissant sur $W(k)$. 

\begin{defi} 
Un $F$-cristal est un couple $(M,F)$, où $M$ est un $W(k)$-module libre de rang fini, et $F : M \to M$ est une injection $\sigma$-linéaire. 
Un morphisme $\varphi : (M,F) \to (M',F')$ entre deux $F$-cristaux est un morphisme de $W(k)$-modules $\varphi : M \to M'$ vérifiant $\varphi \circ F = F' \circ \varphi$.
\end{defi}

\begin{defi}
Un $F$-isocristal est un couple $(N,F)$ où $N$ est un $W(k)[1/p]$-espace vectoriel de dimension finie, et $F : N \to N$ est une bjection $\sigma$-linéaire. Un morphisme $\varphi : (N,F) \to (N',F')$ d'isocristaux est un morphisme de $W(k)[1/p]$-espaces vectoriels $\varphi : N \to N'$ tel que  $\varphi \circ F = F' \circ \varphi$. 
\end{defi}

\begin{rema}
Si l'application $F$ est $\sigma^a$-linéaire pour un certain entier $a \in \mathbb{Z}$, on dit que $(M,F)$ (respectivement $(N,F)$) est un $\sigma^a$-$F$-cristal (resp. isocristal). 
En particulier, à tout $\sigma^a$-$F$-cristal $(M,F)$, on peut associer le $\sigma^a$-$F$-isocristal $(M[1/p],F)$.
\end{rema}

Soit $L$ une extension finie de $\mathbb{Q}_p$, $L^{nr}$ l'extension maximale non-ramifiée contenue dans $L$ et $k_L$ le corps résiduel de $L$. Soient $f$ le degré résiduel, $e$ l'indice de ramification et $\pi$ une uniformisante de $L$. Soient $O_L$ et $O_{L^{nr}}=W(k_L)$ les anneaux des entiers de $L$ et $L^{nr}$, et on suppose que $k$ contient $k_L$.
%, et $E(X)$ le polynôme minimal de $\pi$ sur $L^{nr}$. 

\begin{defi}
Une action de $O_L$ sur le $F$-cristal $(M,F)$ est la donnée d'un morphisme de $\mathbb{Z}_p$-algèbres
$$O_L \to \text{End} (M,F).$$
\end{defi}

Soit $(M,F)$ un $F$-cristal avec une action de $O_L$. Le module $M$ a en particulier une action de $O_{L^{nr}}$ ; si $\mathcal{T}$ désigne l'ensemble des plongements de $O_{L^{nr}}$ dans $W(k)$, alors on a une décomposition naturelle
$$ M = \bigoplus_{\tau \in \mathcal{T}} M_\tau$$
où $M_\tau$ est le sous-module de $M$ où $O_{L^{nr}}$ agit par $\tau$. Le Frobenius $F$ induit des applications $\sigma$-linéaires
$$F_\tau : M_{\sigma^{-1} \tau} \to M_{\tau}.$$
Les modules $(M_\tau)_{\tau \in \mathcal{T}}$ sont donc des $W(k)$-modules libres ayant le même rang. Définissons pour $\tau \in \mathcal{T}$
$$W_{O_L,\tau} (k) := W(k) \otimes_{O_{L^{nr}},\tau} O_L.$$
L'anneau $W_{O_L,\tau} (k)$ est un anneau de valuation discrète qui a pour uniformisante $\pi$. On normalise la valuation sur cet anneau par $v(p)=1$. Le morphisme $\sigma$ s'étend en un morphisme $W_{O_L,\tau} (k) \to~W_{O_L,\sigma \tau} (k)$ par $\sigma (\pi) = \pi$.

\begin{rema}
De manière explicite, si $E(X) \in O_{L^{nr}} [X]$ est le polynôme minimal de $\pi$ sur $O_{L^{nr}}$, alors pour tout $\tau \in \T$
$$W_{O_L,\tau} (k) = W(k) [X] / (\tau E) (X).$$ 
\end{rema}

\noindent On a des définitions et décompositions analogues pour les isocristaux.

\begin{prop}
Soit $(M,F)$ un $F$-cristal avec une action de $O_L$. Alors $M$ se décompose en $M = \oplus_{\tau \in \mathcal{T}} M_\tau$ ; pour tout $\tau \in \mathcal{T}$, le module $M_\tau$ est un $W_{O_L,\tau} (k)$-module libre de rang fini indépendant de $\tau$. Soit $\tau \in \mathcal{T}$ ; le morphisme $F_\tau$ vérifie
$$F_\tau ( \lambda x) = \sigma ( \lambda) F_\tau (x)$$
pour tout $\lambda \in W_{O_L,\sigma^{-1} \tau} (k)$ et $x \in M_{\sigma^{-1} \tau}$.
\end{prop}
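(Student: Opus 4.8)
The plan is to diagonalise the $\iota$-action of the unramified part $O_{L^{nr}} = W(k_L)$ against the $W(k)$-scalar structure, and then to track the Frobenius through the resulting decomposition. First I would record that, since $O_{L^{nr}}$ is unramified (hence étale) of degree $f$ over $\ZZ_p$ and $k \supseteq k_L$, the algebra $W(k) \otimes_{\ZZ_p} O_{L^{nr}}$ splits as a product $\prod_{\tau \in \T} W(k)$ indexed by the embeddings $\tau$: writing $h$ for the minimal polynomial of a generator of $W(k_L)$ over $\ZZ_p$, the polynomial $h$ splits into distinct linear factors over $W(k)$, so the Chinese remainder theorem gives the product together with orthogonal idempotents $e_\tau$. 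As $M$ is a module over $W(k) \otimes_{\ZZ_p} O_{L^{nr}}$ (scalars $W(k)$ together with the $\iota$-action of $O_{L^{nr}}$), setting $M_\tau := e_\tau M$ yields $M = \bigoplus_\tau M_\tau$, where $M_\tau$ is exactly the part on which $a \in O_{L^{nr}}$ acts by the scalar $\tau(a)$.

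Next I would upgrade this to a decomposition of $O_L$-modules. Because $O_L$ is commutative and contains $O_{L^{nr}}$, and $\iota$ lands in $W(k)$-linear endomorphisms, the action $\iota(O_L)$ commutes with the idempotents $e_\tau$, so each $M_\tau$ is stable under $O_L$. Setting $B := W(k) \otimes_{\ZZ_p} O_L \supseteq A := W(k) \otimes_{\ZZ_p} O_{L^{nr}}$, the same idempotents give $e_\tau B = W(k) \otimes_{O_{L^{nr}},\tau} O_L = W_{O_L,\tau}(k)$, so that $M_\tau$ is a module over the discrete valuation ring $W_{O_L,\tau}(k)$. For freeness, I would observe that $M_\tau$ is finitely generated over $W_{O_L,\tau}(k)$ (being a $W(k)$-direct summand of the finite free module $M$, with $W_{O_L,\tau}(k)$ finite over $W(k)$), hence it is free if and only if it is torsion-free; since $p = u\pi^e$ for a unit $u$, torsion over this DVR coincides with $p$-torsion, and $M_\tau \subseteq M$ has none because $M$ is free over $W(k)$.

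For the Frobenius, the key computation combines the two semilinearities of $F$: it is $\sigma$-linear over $W(k)$ and commutes with $\iota$. For $x \in M_\tau$ and $a \in O_{L^{nr}}$ one gets $\iota(a)F(x) = F(\iota(a)x) = F(\tau(a)x) = (\sigma\tau)(a)F(x)$, so $F(x) \in M_{\sigma\tau}$; hence $F$ restricts to $F_\tau : M_{\sigma^{-1}\tau} \to M_\tau$. Writing a typical $\lambda \in W_{O_L,\sigma^{-1}\tau}(k)$ as a combination of elements $w \otimes a$ with $w \in W(k)$ and $a \in O_L$, and using that the Frobenius $W_{O_L,\sigma^{-1}\tau}(k) \to W_{O_L,\tau}(k)$ acts as $\sigma$ on $W(k)$ and fixes $\pi$ (so $w \otimes a \mapsto \sigma(w) \otimes a$), the identity $F(w\,\iota(a)x) = \sigma(w)\,\iota(a)F(x)$ gives $F_\tau(\lambda x) = \sigma(\lambda)F_\tau(x)$ as required. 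Finally, rank independence follows formally: injectivity of $F$ makes each $F_\tau$ injective, so, after inverting $p$, a $\sigma$-semilinear injection forces the rank of $M_{\sigma^{-1}\tau}$ to be at most that of $M_\tau$; since $\sigma$ permutes $\T$ cyclically, all ranks coincide.

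The arguments are essentially formal once the bookkeeping is fixed; the only genuinely delicate point is keeping the two actions of $O_{L^{nr}}$ distinct — the scalar action coming from $W(k_L) \subseteq W(k)$ versus the $\iota$-action that defines the grading — and correctly identifying the induced twist $\tau \mapsto \sigma\tau$ together with the rule $\sigma(\pi) = \pi$ governing the Frobenius on the rings $W_{O_L,\tau}(k)$. I expect this semilinearity verification to be where care is most needed.
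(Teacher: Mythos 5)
Your proof is correct and follows essentially the same route as the paper's: decompose via the $O_{L^{nr}}$-action, note each $M_\tau$ is a torsion-free (hence free) finitely generated module over the DVR $W_{O_L,\tau}(k)$, and deduce the $\pi$-semilinearity statement by combining the $\sigma$-linearity of $F$ over $W(k)$ with $F\circ[\pi]=[\pi]\circ F$. You merely spell out details the paper leaves implicit (the idempotent/CRT decomposition, and the injectivity-plus-cyclicity argument for rank independence), all of which are accurate.
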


\begin{proof}
Si $(M,F)$ est un $F$-cristal avec une action de $O_L$, alors $M_\tau$ est stable par l'action de $\pi$, donc est un $W_{O_L,\tau} (k)$-module. Puisque $M$ est sans torsion, il est libre sur $W_{O_L,\tau} (k)$. Puisque le rang de $M_\tau$ sur $W(k)$ ne dépend pas de $\tau$, son rang sur $W_{O_L,\tau} (k)$ non plus. \\
On sait déjà que l'on a la relation $F ( \lambda x) = \sigma ( \lambda) F(x)$ pour tout $\lambda \in W (k)$ et $x \in M_\tau$. Soit $[\pi]$ l'action de $\pi$ sur $M$ ; alors $F \circ [\pi] = [\pi] \circ F$. On en déduit que $F ( \lambda x) = \sigma ( \lambda) F(x)$ pour tout $\lambda \in W_{O_L,\sigma^{-1} \tau} (k)$ et $x \in M_{\sigma^{-1} \tau}$.
\end{proof}

Si $(M,F)$ est un $F$-cristal avec une action de $O_L$, alors on définit le rang de ce cristal comme le rang de $M_\tau$ sur $W_{O_L,\tau} (k)$ pour n'importe quel élément $\tau \in \mathcal{T}$. \\
Définissons maintenant les polygones de Hodge et de Newton pour un $F$-cristal $(M,F)$ avec une action de $O_L$. Soit $\tau \in \T$ ; d'après le théorème des diviseurs élémentaires appliqué aux modules $F_\tau M_{\sigma^{-1} \tau} \subset M_\tau$, il existe des éléments $a_{\tau,1}, \dots, a_{\tau,h}$ dans $W_{O_L,\tau} (k)$ tels que
$$M_\tau /F_\tau M_{\sigma^{-1} \tau} \simeq \bigoplus_{i=1}^h W_{O_L,\tau} (k) / a_{\tau,i} W_{O_L,\tau} (k)$$
où $h$ est le rang du cristal. On peut bien sûr supposer les valuations des $a_{\tau , i}$ ordonnés : \\
$v(a_{\tau , 1}) \leq~v(a_{\tau,2}) \leq~\dots \leq~v(a_{\tau,h})$.

\begin{defi}
Le polygone de Hodge de $(M,F)$ relativement au plongement $\tau$ est le polygone à abscisses de rupture entières sur $[0,h]$ défini par $\Hdg_{O_L,\tau} (M,F)(0)=0$ et
$$\Hdg_{O_L,\tau} (M,F)(i) = v(a_{\tau,1}) + \dots + v(a_{\tau,i})$$
pour $1 \leq i \leq h$. Le polygone de Hodge de $(M,F)$ est défini comme la moyenne des polygones $\Hdg_{O_L,\tau} (M,F)$, c'est-à-dire
$$\Hdg_{O_L} (M,F) (i) =\frac{1}{f} \sum_{\tau \in \T} \Hdg_{O_L,\tau} (M,F) (i)$$
pour $0 \leq i \leq h$.
\end{defi}

Les points initial et terminal du polygone $\Hdg_{O_L,\tau} (M,F)$ sont donc $(0,0)$ et $(h, v(\det F_{\tau}))$, où le déterminant de $F_{\tau}$ est vu comme un élément de $W_{O_L,\tau}(k)$. Si $M_\tau / F^f M_\tau \simeq \oplus_{i=1}^h W_{O_L,\tau} (k) / b_{\tau,i} W_{O_L,\tau} (k)$, alors on notera $\Hdg_{O_L} (M_\tau, F^f)$ le polygone convexe ayant pour pentes $v(b_{\tau,1}), \dots, v(b_{\tau,h})$.\\
$ $\\
Soit $(N,F)$ un $F$-isocristal sur $k$, muni d'une action de $O_L$. Soit $\tau \in \T$, et considérons le morphisme $F^f  : N_\tau \to N_\tau$. Quitte à faire une extension $K$ de $W_{O_L,\tau} (k)[1/p]$ en ajoutant des racines de $\pi$, on peut supposer que la matrice de $F^f$ agissant sur $N_\tau$ dans une certaine base est de la forme

 \begin{displaymath}
\left(
\begin{array}{ccc}
\lambda_1 & \dots & \star \\
& \ddots & \vdots \\
& & \lambda_h
\end{array}
\right)
\end{displaymath}

\noindent pour des éléments $\lambda_i \in K$ avec $v(\lambda_1) \leq \dots \leq v(\lambda_h)$. Si $(N,F)$ vérifie $F(N) \subset N$ (par exemple si $(N,F) = (M[1/p],F)$), alors en plus $v(\lambda_i) \geq 0$ pour tout $i$.

\begin{defi}
Le polygone de Newton d'un isocristal $(N,F)$ relativement au plongement $\tau$ est le polygone à abscisses de rupture entières sur $[0,h]$ défini par $\Newt_{O_L,\tau} (N,F)(0)=0$ et
$$\Newt_{O_L,\tau} (N,F)(i) = \frac{v(\lambda_1) + \dots + v(\lambda_i)}{f}$$
pour $1 \leq i \leq h$.
Le polygone de Newton relativement à un plongement $\tau$ d'un cristal $(M,F)$ est celui de l'isocristal $(M[1/p],F)$.
\end{defi}

La théorie de Dieudonné-Manin montre que ce polygone est bien défini, et ne dépend pas de la base dans laquelle la matrice de $F^f$ est écrite. On notera également 
$\Newt_{O_L} (N_\tau, F^f)$ le polygone égal à $f \cdot \Newt_{O_L,\tau} (N,F)$.

\begin{prop}
Soit $(N,F)$ un isocristal. Le polygone $\Newt_{O_L,\tau} (N,F)$ est indépendant de $\tau$, et sera noté $\Newt_{O_L} (N,F)$. C'est le $O_L$-polygone de Newton de $(N,F)$.
\end{prop}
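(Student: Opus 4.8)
Le plan est d'utiliser les isomorphismes semi-linéaires $F_\tau$ pour transporter l'opérateur $F^f$ d'un plongement au suivant, et de vérifier que ce transport préserve le polygone de Newton. Comme $\sigma$ permute $\T$ de façon transitive (en un seul $f$-cycle), il suffira de montrer que $\Newt_{O_L,\tau}(N,F) = \Newt_{O_L,\sigma\tau}(N,F)$ pour tout $\tau \in \T$.

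Je commencerais par remarquer que, $(N,F)$ étant un isocristal, $F$ est bijectif ; comme $F$ envoie $N_{\sigma^{-1}\tau}$ dans $N_\tau$ et que ces facteurs sont permutés cycliquement par $\sigma$, chaque $F_\tau : N_{\sigma^{-1}\tau} \to N_\tau$ est une bijection $\sigma$-linéaire. La restriction de $F^f$ au facteur $N_\tau$ fournit alors une bijection $\sigma^f$-linéaire $\Phi_\tau := F^f_{|N_\tau}$, dont les pentes (divisées par $f$) définissent $\Newt_{O_L,\tau}$. Partant de $F^{f} \circ F = F \circ F^{f}$ et de la décomposition, la restriction à $N_\tau$ donne la relation d'entrelacement
$$ \Phi_{\sigma\tau} \circ F_{\sigma\tau} = F_{\sigma\tau} \circ \Phi_{\tau}, $$
où $F_{\sigma\tau} : N_\tau \to N_{\sigma\tau}$ est une bijection $\sigma$-linéaire.

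Je passerais ensuite aux matrices. Posons $K_\tau := W_{O_L,\tau}(k)[1/p]$ ; rappelons que $\sigma : K_\tau \to K_{\sigma\tau}$ est un isomorphisme de corps valués préservant la valuation et vérifiant $\sigma \circ \sigma^f = \sigma^f \circ \sigma$. En choisissant des bases de $N_\tau$ et de $N_{\sigma\tau}$, notons $A$ la matrice de $\Phi_\tau$, $A'$ celle de $\Phi_{\sigma\tau}$ et $B$ celle de $F_{\sigma\tau}$ ; la semi-linéarité transforme la relation précédente en
$$ A' \, \sigma^f(B) = B \, \sigma(A), \qquad \text{soit} \qquad A' = B\,\sigma(A)\,\sigma^f(B)^{-1}, $$
ce qui exhibe $A'$ comme le $\sigma^f$-conjugué (par $B$) de $\sigma(A)$.

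Il ne resterait plus qu'à invoquer les deux propriétés d'invariance du polygone de Newton d'un opérateur $\sigma^f$-linéaire (théorie de Dieudonné--Manin) : il est invariant par $\sigma^f$-conjugaison $A \mapsto P A \sigma^f(P)^{-1}$, et il est transporté par l'isomorphisme $\sigma : K_\tau \to K_{\sigma\tau}$, lequel préserve la valuation et commute à $\sigma^f$, envoyant l'opérateur de matrice $A$ sur $K_\tau$ sur celui de matrice $\sigma(A)$ sur $K_{\sigma\tau}$ avec les mêmes pentes. La combinaison des deux donne $\Newt_{O_L,\sigma\tau}(N,F) = \Newt_{O_L,\tau}(N,F)$, d'où l'indépendance en $\tau$ par transitivité. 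Le seul point réellement délicat est la gestion de la semi-linéarité --- vérifier que le transport par le $F_{\sigma\tau}$ $\sigma$-linéaire conjugue par $\sigma^f$ d'un côté tout en appliquant $\sigma$ aux coefficients de l'autre --- mais une fois l'identité matricielle ci-dessus établie, la conclusion est immédiate à partir de l'invariance standard des pentes par changement de base préservant la valuation.
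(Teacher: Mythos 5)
Votre preuve est correcte et suit essentiellement la même démarche que celle du papier : la relation d'entrelacement $A' = B\,\sigma(A)\,\sigma^f(B)^{-1}$ que vous établissez est exactement l'identité $X_\tau = Y_\tau (X_{\sigma^{-1}\tau})^\sigma (Y_\tau^{-1})^{\sigma^f}$ du texte, et la conclusion par invariance du polygone de Newton sous $\sigma^f$-conjugaison et sous transport par $\sigma$ est la même. Rien à redire.
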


\begin{proof}
Fixons des bases sur les modules $N_\tau$, et soit $X_\tau$ la matrice de $F^f$ agissant sur $N_\tau$ pour tout $\tau$. Soit également $Y_\tau$ la matrice de $F_\tau : N_{\sigma^{-1} \tau} \to N_\tau$. Alors on a l'égalité dans $M_n ( W_{O_L,\tau} (k) [1/p] )$
$$ X_\tau = Y_\tau   (X_{\sigma^{-1} \tau})^\sigma   (Y_\tau^{-1})^{\sigma^f} $$
où $(X_{\sigma^{-1} \tau})^\sigma$ est la matrice $X_{\sigma^{-1} \tau}$ à laquelle on applique $\sigma$ aux coefficients, et de même pour $(Y_\tau^{-1})^{\sigma^f}$. Cela prouve que, quitte à faire un changement de base pour le module $N_\tau$, on peut supposer que la matrice de $F^f$ agissant sur $N_\tau$ est égale à $(X_{\sigma^{-1} \tau})^\sigma $. Les valuations des valeurs propres de $X_\tau$ et $X_{\sigma^{-1} \tau}$ sont donc les mêmes. Puisque $\T$ est un groupe cyclique engendré par $\sigma$, on en déduit le résultat.
\end{proof}

\begin{rema}
On a choisi de détailler la preuve précédente, qui démontre essentiellement que le polygone de Newton d'un cristal sur $W_{O_L,\tau}(k)$ est invariant par isogénie.
\end{rema}

Puisque le morphisme $F^f$ agissant sur $M_\tau$ est égal au composé des morphismes $F_{\sigma^i \tau}$ pour $1 \leq i \leq f$, on voit que les points initial et terminal du polygone $\Newt_{O_L} (M,F)$ sont les mêmes que ceux du polygone $\Hdg_{O_L} (M,F)$.

\begin{ex}
Si $M= W (k) \otimes_{\mathbb{Z}_p} O_L$, et $F= p \cdot (\sigma \otimes 1)$, alors $(M,F)$ est un $F$-cristal avec action de $O_L$ de rang $1$. Les polygones de Hodge et de Newton de $(M,F)$ sont égaux, et ont une pente égale à $1$.
\end{ex}

\begin{rema}
Si $\Newt(M,F)$ et $\Hdg(M,F)$ désignent les polygones de Newton et de de Hodge de $(M,F)$ (sans prendre en compte l'action de $O_L$), alors les pentes de $\Newt (M,F)$ sont exactement celles de $\Newt_{O_L} (M,F)$, chacune étant comptée avec une multiplicité $ef$. En revanche, les pentes de $\Hdg(M,F)$ et $\Hdg_{O_L} (M,F)$ ne sont pas reliées en général.
\end{rema}

\subsection{Condition de Pappas-Rapoport}

Soit $(M,F)$ un $F$-cristal avec action de $O_L$ de rang $h$, et soit pour tout $\tau \in \T$ un entier $r_\tau \geq 1$, ainsi que des entiers $(d_{\tau,1}, \dots, d_{\tau,r_\tau})$ compris entre $0$ et $h$.

\begin{defi}
\phantomsection\label{defPR}
On dit que le $F$-cristal $(M,F)$ satisfait la condition de Pappas-Rapoport (ou condition PR en abrégé) pour la donnée $(d_{\tau,i})_{\tau \in \T, 1 \leq i \leq r_\tau}$ si pour tout $\tau \in \T$, il existe une filtration 
$$F_\tau M_{\sigma^{-1} \tau} = \Fil^{[0]} M_\tau \subset \Fil^{[1]} M_\tau \subset \dots \subset \Fil^{[r_\tau-1]} M_\tau \subset \Fil^{[r_\tau]} M_\tau = M_\tau$$
avec
\begin{itemize}
\item pour tout $0 \leq i \leq r_\tau$, $\Fil^{[i]} M_\tau$ est un sous-$W_{O_L,\tau} (k)$-module de $M_\tau$.
\item on a $\pi \cdot \Fil^{[i]} M_\tau \subset \Fil^{[i-1]} M_\tau$ pour $1 \leq i \leq r_\tau$.
\item $\Fil^{[i]} M_\tau / \Fil^{[i-1]} M_\tau$ est un $k$-espace vectoriel de dimension $d_{\tau,i}$ pour tout $1 \leq i \leq r_\tau$.
\end{itemize}
\end{defi}

Soit $\tau \in \T$. Pour tout $1 \leq i \leq r_\tau$, considérons le polygone convexe défini sur $[0,h]$ ayant pour pentes $0$ avec multiplicité $h-d_{\tau,i}$ et $1/e$ avec multiplicité $d_{\tau,i}$. On définit le polygone de Pappas-Rapoport $\PR_\tau (d_\bullet)$ comme étant la somme de ces polygones pour $1 \leq i \leq r_\tau$. De manière explicite, on a 
$$\PR_\tau (d_\bullet) (j) = \frac{1}{e} \sum_{i=1}^{r_\tau} \max(j-h+d_{\tau,i},0).$$
En étudiant la longueur de $M_\tau / F_\tau M_{\sigma^{-1} \tau}$ comme $W_{O_L,\tau} (k)$-module, on voit que si $(M,F)$ est un $F$-cristal avec action de $O_L$ de rang $h$ satisfaisant la condition PR pour la donnée $(d_\bullet)$, alors $\Hdg_{O_L,\tau} (M,F) (h) =~\PR_\tau (d_\bullet) (h)$. En d'autres termes, les polygones $\Hdg_{O_L,\tau} (M,F)$ et $\PR_\tau (d_\bullet)$ ont mêmes points initial et terminal. \\
On définit le polygone $\PR(d_\bullet)$ comme la moyenne des polygones $\PR_\tau (d_\bullet)$ pour $\tau \in \T$. Les polygones $\Hdg_{O_L} (M,F)$, $\Newt_{O_L} (M,F)$ et $\PR(d_\bullet)$ ont donc mêmes points initial et terminal. \\
$ $\\
Soit $(M,F)$ un $F$-cristal avec action de $O_L$ de rang $h$, et soit $1 \leq s \leq h$ un entier. On définit 
$$\bigwedge^s M = \bigoplus_{\tau \in \T} \bigwedge^s M_\tau$$
le produit extérieur $\bigwedge^s M_\tau$ étant pris sur $W_{O_L,\tau} (k)$. Le morphisme $\bigwedge^s F$ induit des morphismes $\sigma$-linéaires $\bigwedge^s M_{\sigma^{-1} \tau} \to \bigwedge^s M_\tau$. On voit donc que $(\bigwedge^s M, \bigwedge^s F)$ est un $F$-cristal avec action de $O_L$ ; de plus il est de rang $\binom{h}{s}$. 

\begin{rema}
Puisque $M$ est un $W(k) \otimes_{\mathbb{Z}_p} O_L$-module libre de rang $h$, on aurait également pu définir $\bigwedge^s M$ comme le $s$-ième produit extérieur de $M$, celui-ci étant pris sur l'anneau $W(k) \otimes_{\mathbb{Z}_p} O_L$.
\end{rema}

\noindent Supposons maintenant que $(M,F)$ satisfait la condition PR pour une certaine donnée $(d_{\tau,1}, \dots, d_{\tau,r_\tau})_{\tau \in \T}$. On a donc des filtrations pour $\tau \in \T$
$$F_\tau M_{\sigma^{-1} \tau} = \Fil^{[0]} M_\tau \subset \Fil^{[1]} M_\tau \subset \dots \subset \Fil^{[r_\tau-1]} M_\tau \subset \Fil^{[r_\tau]} M_\tau = M_\tau.$$

\begin{defi}
On définit une filtration sur $\bigwedge^s M_\tau$ par 
$$ \Fil^{[is + j]} \bigwedge^s M_\tau := \im  \left( \bigwedge^{s-j} \Fil^{[i]} M_\tau \otimes \bigwedge^{j} \Fil^{[i+1]} M_\tau \to \bigwedge^s M_\tau \right)$$
pour $0 \leq i \leq r_\tau-1$ et $0 \leq j < s$. On définit également $\Fil^{[r_\tau s]} \bigwedge^s M_\tau := \bigwedge^{s} \Fil^{[r_\tau]} M_\tau$.
\end{defi}

On a donc une filtration
$$\bigwedge^s F_\tau M_{\sigma^{-1} \tau} = \Fil^{[0]} \bigwedge^s M_\tau \subset \Fil^{[1]} \bigwedge^s M_\tau \subset \dots \subset \Fil^{[r_\tau s-1]} \bigwedge^s M_\tau \subset \Fil^{[r_\tau s]} \bigwedge^s M_\tau = \bigwedge^s M_\tau.$$

On vérifie facilement que $\pi \cdot \Fil^{[i]} \bigwedge^s M_\tau \subset \Fil^{[i-1]} \bigwedge^s M_\tau$, pour $1 \leq i \leq r_\tau s$. De plus, il est possible de calculer la dimension des $k$-espaces vectoriels $\left(\Fil^{[i]} \bigwedge^s M_\tau\right) / \left(\Fil^{[i-1]} \bigwedge^s M_\tau\right)$, pour $1 \leq i \leq r_\tau s$.

\begin{prop}
Soit $i$ un entier entre $1$ et $r_\tau$, et $k$ un entier compris entre $0$ et $s-1$. Alors la dimension sur $k$ de $\left(\Fil^{[is - k]} \bigwedge^s M_\tau\right) /\left( \Fil^{[is-k-1]} \bigwedge^s M_\tau\right)$ est égale à
$$\mathlarger{\mathlarger{\sum}}_{j=0}^k \binom{d_{\tau,i}}{s-j} \binom{h-d_{\tau,i}}{j}.$$
\end{prop}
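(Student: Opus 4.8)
Le plan est de choisir une base de $M_\tau$ adaptée à la filtration, puis de décrire explicitement les crans $\Fil^{[\bullet]}\bigwedge^s M_\tau$ dans cette base. Posons $R=W_{O_L,\tau}(k)$, $A=\Fil^{[i-1]}M_\tau$, $B=\Fil^{[i]}M_\tau$ et $d=d_{\tau,i}$. La première étape est de constater, en déroulant la définition de la filtration sur $\bigwedge^s M_\tau$, que pour tout $0\le k\le s$ on a
\[\Fil^{[is-k]}\bigwedge^s M_\tau=\im\Big(\bigwedge^{k}A\otimes\bigwedge^{s-k}B\to\bigwedge^s M_\tau\Big)=:\Omega_k,\]
l'image étant contenue dans $\bigwedge^s B$ puisque $A\subset B$ ; en particulier $\Fil^{[is-k-1]}\bigwedge^s M_\tau=\Omega_{k+1}$, et il suffit de calculer $\dim_k(\Omega_k/\Omega_{k+1})$. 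Comme $\pi B\subset A\subset B$ et $\dim_k(B/A)=d$, j'appliquerais ensuite le théorème des diviseurs élémentaires sur l'anneau de valuation discrète $R$ : tous les diviseurs élémentaires de $A$ dans $B$ valant $1$ ou $\pi$, et exactement $d$ valant $\pi$, on obtient une base $e_1,\dots,e_h$ de $B$ telle que $A=\pi e_1 R\oplus\dots\oplus\pi e_d R\oplus e_{d+1}R\oplus\dots\oplus e_h R$.

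Le module $\bigwedge^s B$ est alors libre de base $(e_I)_{|I|=s}$, avec $e_I=e_{i_1}\wedge\dots\wedge e_{i_s}$ pour $I=\{i_1<\dots<i_s\}$ ; pour une telle partie $I$ je note $m(I)=|I\cap\{1,\dots,d\}|$ le nombre d'indices lourds (ceux appartenant à $\{1,\dots,d\}$). Le cœur de la preuve, et le point que j'attends le plus délicat, sera l'identité
\[\Omega_k=\bigoplus_{|I|=s}\pi^{c_k(I)}R\,e_I,\qquad c_k(I):=\max\big(0,\,m(I)-(s-k)\big).\]
Pour l'établir, j'écrirais $\Omega_k$ comme engendré par les produits $g_1\wedge\dots\wedge g_s$ dont $k$ facteurs sont des générateurs $\pi e_1,\dots,\pi e_d,e_{d+1},\dots,e_h$ de $A$ et $s-k$ facteurs sont des $e_l$ ; un tel produit est nul si un indice se répète, et vaut sinon $\pm\pi^t e_I$ où $t$ est le nombre d'indices lourds affectés à un facteur provenant de $A$. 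Comme $I$ comporte $m(I)$ indices lourds et $s-m(I)$ indices légers, la valeur minimale de $t$ s'obtient en remplissant d'abord les $k$ emplacements réservés aux facteurs issus de $A$ par les indices légers, ce qui donne $t=\max(0,k-(s-m(I)))=c_k(I)$. Il faudra vérifier soigneusement que cette puissance minimale est effectivement atteinte (l'affectation optimale étant toujours réalisable car $s-k\ge 0$) et qu'aucune puissance plus petite n'apparaît, ce qui donne à la fois l'inclusion de chaque générateur dans le membre de droite et l'appartenance de $\pi^{c_k(I)}e_I$ à $\Omega_k$.

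Pour conclure, j'observerais que $c_{k+1}(I)-c_k(I)\in\{0,1\}$, de sorte que les deux modules étant diagonaux dans la base $(e_I)$ on a
\[\Omega_k/\Omega_{k+1}=\bigoplus_{|I|=s}\pi^{c_k(I)}R\big/\pi^{c_{k+1}(I)}R,\]
le facteur indexé par $I$ étant isomorphe à $R/\pi R$, donc de dimension $1$, lorsque $m(I)\ge s-k$, et nul sinon. Ainsi $\dim_k(\Omega_k/\Omega_{k+1})$ est le nombre de parties $I$ de cardinal $s$ vérifiant $m(I)\ge s-k$ ; en regroupant selon la valeur $m=m(I)$ et en utilisant qu'il y a $\binom{d}{m}\binom{h-d}{s-m}$ telles parties, j'obtiendrais $\sum_{m=s-k}^{s}\binom{d}{m}\binom{h-d}{s-m}$, soit, après le changement d'indice $j=s-m$, la quantité $\sum_{j=0}^{k}\binom{d_{\tau,i}}{s-j}\binom{h-d_{\tau,i}}{j}$ annoncée.
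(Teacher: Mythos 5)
Votre démonstration est correcte et suit essentiellement la même démarche que celle de l'article : choix d'une base adaptée $(e_1,\dots,e_h)$ de $\Fil^{[i]}M_\tau$ telle que $\Fil^{[i-1]}M_\tau$ soit engendré par $(\pi e_1,\dots,\pi e_{d},e_{d+1},\dots,e_h)$, identification de $\Fil^{[is-k]}\bigwedge^s M_\tau$ avec l'image de $\bigwedge^{k}\Fil^{[i-1]}M_\tau\otimes\bigwedge^{s-k}\Fil^{[i]}M_\tau$, description explicite des générateurs $\pi^{c_k(I)}e_I$ puis comptage des parties $I$. Votre formulation de la puissance minimale $c_k(I)=\max(0,m(I)-(s-k))$ et le calcul final par regroupement selon $m(I)$ rendent simplement explicite le décompte que l'article laisse au lecteur.
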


\begin{proof}
Posons $N = \Fil^{[i]} M_\tau$, et $N_0 = \Fil^{[i-1]} M_\tau$. Alors $N$ et $N_0$ sont des $W_{O_L,\tau} (k)$-modules libres de rang $h$, $\pi \cdot N \subset N_0 \subset N$, et le $k$-espace vectoriel $N/N_0$ est de dimension $d_{\tau,i}$. On peut donc choisir une base $(e_1, \dots, e_h)$ de $N$ sur $W_{O_L,\tau} (k)$ telle que $N_0$ soit égal au module engendré par $(\pi e_1, \dots, \pi e_{d_{\tau,i}}, e_{d_{\tau,i}+1}, \dots, e_h)$. Le module $\bigwedge^s N$ est engendré par les éléments $e_{i_1} \wedge \dots\wedge e_{i_s}$, avec $i_1 < \dots < i_s$. De plus, pour tout entier $0 \leq j \leq s$, le module $N_j := $ Im $(\bigwedge^j N_0 \otimes \bigwedge^{s-j} N \to \bigwedge^s N)$ est engendré par
\begin{itemize}
\item $\pi^j e_{i_1} \wedge \dots \wedge e_{i_s}$, avec $i_1 < \dots < i_s \leq d_{\tau,i}$.
\item $\pi^{j-1} e_{i_1} \wedge \dots \wedge e_{i_s}$, avec $i_1 < \dots < i_{s-1} \leq d_{\tau,i} < i_s$.
\item \dots
\item $\pi e_{i_1} \wedge \dots \wedge e_{i_s}$, avec $i_1 < \dots < i_{s-j+1} \leq d_{\tau,i} < i_{s-j} < \dots < i_s$.
\item $e_{i_1} \wedge \dots \wedge e_{i_s}$, avec $i_1 < \dots < i_s$ et $d_{\tau,i} < i_{s-j+1}$.
\end{itemize}
Le quotient $\left(\Fil^{[is - k]} \bigwedge^s M_\tau\right) / \left(\Fil^{[is-k-1]} \bigwedge^s M_\tau\right)$ étant égal à $N_k / N_{k+1}$, on en déduit le résultat.
\end{proof}

Définissons des entiers $(d_{\tau,l}^{(s)})_{\tau \in \T,1 \leq l \leq r_\tau s}$ par la formule
$$d_{\tau,is - k}^{(s)} = \mathlarger{\mathlarger{\sum}}_{j=0}^k \binom{d_{\tau,i}}{s-j} \binom{h-d_{\tau,i}}{j}$$
pour $\tau \in \T$, $1 \leq i \leq r_\tau$ et $0 \leq k \leq s-1$.

\begin{coro}
Le $F$-cristal $(\bigwedge^s M, \bigwedge^s F)$ satisfait la condition PR pour la donnée $(d_{\tau,l}^{(s)})_{\tau \in \T,1 \leq l \leq r_\tau s}$.
\end{coro}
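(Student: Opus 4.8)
La stratégie est de vérifier directement que la filtration $\big(\Fil^{[l]} \bigwedge^s M_\tau\big)_{0 \leq l \leq r_\tau s}$ introduite ci-dessus réalise, pour chaque $\tau \in \T$, la condition PR de la définition \ref{defPR} relativement à la donnée $(d_{\tau,l}^{(s)})$. La plupart des ingrédients ayant déjà été établis, il s'agit essentiellement d'assembler les trois points de cette définition.

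D'abord, les points initial et terminal de la filtration sont corrects : la définition donne $\Fil^{[0]} \bigwedge^s M_\tau = \bigwedge^s \Fil^{[0]} M_\tau = \bigwedge^s F_\tau M_{\sigma^{-1}\tau}$, et comme $\Fil^{[0]} M_\tau = F_\tau M_{\sigma^{-1}\tau}$ est libre de rang $h$, ce module coïncide bien avec l'image de $\bigwedge^s F_\tau$ ; de même $\Fil^{[r_\tau s]} \bigwedge^s M_\tau = \bigwedge^s M_\tau$ par définition. Ensuite, chaque cran $\Fil^{[l]} \bigwedge^s M_\tau$ est un sous-$W_{O_L,\tau}(k)$-module, puisqu'il est défini comme l'image d'un morphisme de $W_{O_L,\tau}(k)$-modules.

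Restent les deux propriétés quantitatives. Pour la stabilité $\pi \cdot \Fil^{[l]} \bigwedge^s M_\tau \subset \Fil^{[l-1]} \bigwedge^s M_\tau$ (déjà signalée comme facile), j'écrirais $l = is + j$ et je ferais porter le facteur $\pi$ sur l'un des termes $\Fil^{[i+1]} M_\tau$ (ou, si $j = 0$, sur l'un des termes $\Fil^{[i]} M_\tau$) ; la relation $\pi \cdot \Fil^{[i]} M_\tau \subset \Fil^{[i-1]} M_\tau$ dans $M_\tau$ fait alors descendre d'exactement un cran dans la filtration du produit extérieur. Enfin, pour les dimensions des gradués, la proposition précédente calcule exactement $\dim_k \big(\Fil^{[is-k]} \bigwedge^s M_\tau / \Fil^{[is-k-1]} \bigwedge^s M_\tau\big)$, et ces dimensions sont précisément les entiers $d_{\tau,is-k}^{(s)}$ par leur définition même ; le troisième point est donc automatique.

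Il n'y a pas ici d'obstacle sérieux : tout le travail de fond a été fait dans la proposition de calcul des dimensions, et le reste est formel. Le seul point demandant un peu de soin est la numérotation : il faut vérifier que la reparamétrisation $l = is - k$, pour $1 \leq i \leq r_\tau$ et $0 \leq k \leq s-1$, décrit bijectivement $\{1, \dots, r_\tau s\}$, de sorte que les $d_{\tau,l}^{(s)}$ épuisent bien tous les gradués, et que le décalage par $\pi$ abaisse l'indice d'exactement une unité dans les deux cas $j \geq 1$ et $j = 0$. Comme $(\bigwedge^s M, \bigwedge^s F)$ est déjà un $F$-cristal avec action de $O_L$ de rang $\binom{h}{s}$, ces vérifications faites pour tout $\tau$ achèvent la preuve.
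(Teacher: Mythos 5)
Votre démonstration est correcte et suit exactement la démarche du texte : le corollaire y est énoncé sans preuve car il résulte immédiatement de la définition de la filtration sur $\bigwedge^s M_\tau$, de la stabilité par $\pi$ (signalée comme une vérification facile) et de la proposition calculant les dimensions des gradués. Vos vérifications supplémentaires (points initial et terminal, bijectivité de la reparamétrisation $l = is - k$, traitement séparé des cas $j \geq 1$ et $j = 0$ pour la descente par $\pi$) sont précisément les détails laissés au lecteur.
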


Il est possible de relier les différents polygones de $(M,F)$ à ceux de $(\bigwedge^s M, \bigwedge^s F)$.

\begin{prop} \phantomsection\label{ext}
Soit $(M,F)$ un $F$-cristal avec une action de $O_L$ de rang $h$. On suppose que $(M,F)$ satisfait la condition PR pour la donnée $(d_{\tau,1}, \dots, d_{\tau,r_\tau})_{\tau \in \T}$. Alors on a pour tout entier $1 \leq s \leq h$ et $\tau \in \T$
\begin{itemize}
\item $\Newt_{O_L} (\bigwedge^s M, \bigwedge^s F) (1) = \Newt_{O_L} (M,F) (s)$.
\item $\Hdg_{O_L,\tau} (\bigwedge^s M, \bigwedge^s F) (1) = \Hdg_{O_L,\tau} (M,F) (s)$.
\item $\PR_\tau (d_\bullet^{(s)}) (1) = \PR_\tau (d_\bullet) (s)$.
\end{itemize}
En particulier, on en déduit, $\Hdg_{O_L} (\bigwedge^s M, \bigwedge^s F) (1) = \Hdg_{O_L} (M,F) (s)$ pour tout $1 \leq s \leq h$.
\end{prop}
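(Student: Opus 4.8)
The plan is to establish each of the three bullet points separately, the common principle being that evaluating a polygon of $\bigwedge^s$ at the abscissa $1$ extracts its \emph{smallest} slope, and that this smallest slope is controlled by the $s$ smallest slopes of the corresponding polygon of $(M,F)$.

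First I would treat the Newton equality. Fixing $\tau$ and, by Dieudonné--Manin, enlarging $W_{O_L,\tau}(k)[1/p]$ so that $F^f$ acts on $N_\tau$ by an upper triangular matrix with diagonal $\lambda_1, \dots, \lambda_h$ and $v(\lambda_1) \leq \dots \leq v(\lambda_h)$, the operator $\bigwedge^s F^f$ acts on $\bigwedge^s N_\tau$ with eigenvalues $\lambda_{i_1} \cdots \lambda_{i_s}$ for $i_1 < \dots < i_s$. The eigenvalue of least valuation is $\lambda_1 \cdots \lambda_s$, so the smallest slope of $\Newt_{O_L}(\bigwedge^s M, \bigwedge^s F)$ is $\frac{1}{f}(v(\lambda_1) + \dots + v(\lambda_s))$, which is exactly $\Newt_{O_L}(M,F)(s)$. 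Since the value of a convex polygon with integer breakpoints at abscissa $1$ is its first (smallest) slope, this gives the first bullet.

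Next, for the Hodge equality relative to $\tau$, I would use the theory of elementary divisors over the discrete valuation ring $W_{O_L,\tau}(k)$: choosing bases of $M_{\sigma^{-1}\tau}$ and $M_\tau$ adapted to the Smith normal form of $F_\tau$, with diagonal entries $a_{\tau,1}, \dots, a_{\tau,h}$, the induced bases on exterior powers make $\bigwedge^s F_\tau$ diagonal with entries $a_{\tau,i_1} \cdots a_{\tau,i_s}$. The least valuation among these is $v(a_{\tau,1}) + \dots + v(a_{\tau,s}) = \Hdg_{O_L,\tau}(M,F)(s)$, which, being the smallest Hodge slope of $\bigwedge^s$, equals $\Hdg_{O_L,\tau}(\bigwedge^s M, \bigwedge^s F)(1)$. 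Averaging this identity over $\tau \in \T$ then yields the final assertion $\Hdg_{O_L}(\bigwedge^s M, \bigwedge^s F)(1) = \Hdg_{O_L}(M,F)(s)$.

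The hard part will be the Pappas--Rapoport equality, which is purely combinatorial. Applying the defining formula of $\PR_\tau$ to the datum $d_\bullet^{(s)}$ of rank $\binom{h}{s}$, one gets
\[ \PR_\tau(d_\bullet^{(s)})(1) = \frac{1}{e} \# \left\{ l : d_{\tau,l}^{(s)} = \binom{h}{s} \right\}, \]
since $\max(1 - \binom{h}{s} + d_{\tau,l}^{(s)}, 0)$ equals $1$ exactly when $d_{\tau,l}^{(s)} = \binom{h}{s}$ and $0$ otherwise. Writing $d_{\tau,is-k}^{(s)} = \sum_{j=0}^k \binom{d_{\tau,i}}{s-j} \binom{h-d_{\tau,i}}{j}$ as a partial sum and invoking the Chu--Vandermonde identity $\sum_{j=0}^s \binom{d_{\tau,i}}{s-j} \binom{h-d_{\tau,i}}{j} = \binom{h}{s}$, I would count, inside the block of indices attached to a fixed $i$, those $l$ for which the tail $\sum_{j=k+1}^s \binom{d_{\tau,i}}{s-j}\binom{h-d_{\tau,i}}{j}$ already vanishes. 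The binding constraint is the vanishing of $\binom{h-d_{\tau,i}}{j}$ for $j > h - d_{\tau,i}$, which gives precisely $\max(d_{\tau,i} - h + s, 0)$ such indices per block. Summing over $i$ produces $\sum_i \max(d_{\tau,i} - h + s, 0) = e \cdot \PR_\tau(d_\bullet)(s)$, whence $\PR_\tau(d_\bullet^{(s)})(1) = \PR_\tau(d_\bullet)(s)$. The only delicate point is this index count; the Newton and Hodge parts are routine once the exterior-power description of eigenvalues and elementary divisors is in place.
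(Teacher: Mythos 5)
Votre démonstration est correcte et suit essentiellement la même démarche que celle de l'article : trigonalisation de $F^f$ pour les valeurs propres de $\bigwedge^s F^f$ (Newton), diviseurs élémentaires de $F_\tau$ et diagonalisation de $\bigwedge^s F_\tau$ (Hodge), puis décompte des indices $l$ tels que $d_{\tau,l}^{(s)}=\binom{h}{s}$ via l'identité de Vandermonde (Pappas--Rapoport). Le seul point que l'article détaille davantage est la réciproque du décompte combinatoire, à savoir que pour $k<h-d_{\tau,i}$ la somme partielle est \emph{strictement} inférieure à $\binom{h}{s}$ (il faut exhiber un $j$ avec $k+1\leq j\leq h-d_{\tau,i}$ et $0\leq s-j\leq d_{\tau,i}$, ce qui utilise $s\leq h$) ; votre argument du \og binding constraint \fg{} l'esquisse sans le vérifier complètement, mais la conclusion est la bonne.
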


\begin{proof}
Montrons la première égalité. Soit $\tau$ un élément de $\T$, et soit $\lambda_1, \dots, \lambda_h$ les valeurs propres de $F^f$ agissant sur $M_\tau$ dans une certaine base. Si on ordonne les valeurs propres $\lambda_i$ de telle sorte que $v(\lambda_1) \leq \dots \leq v(\lambda_h)$, alors $\Newt_{O_L} (M,F) (s) = (v(\lambda_1) + \dots + v(\lambda_s))/f$. Les valeurs propres de $\bigwedge^s F^f$ agissant sur $M_\tau$ sont $\lambda_{i_1} \cdot \dots \cdot \lambda_{i_s}$ avec $i_1 < \dots <i_s$. D'où
$$\Newt_{O_L} (\bigwedge^s M, \bigwedge^s F) (1) = \frac{v(\lambda_1 \dots \lambda_s)}{f} = \Newt_{O_L} (M,F) (s).$$
Cela prouve le premier point. Pour le deuxième, étudions le morphisme $F_\tau : M_{\sigma^{-1} \tau} \to M_\tau$. Quitte à faire des changements de base pour ces deux modules, on peut supposer que la matrice de $F_\tau$ est diagonale, avec coefficients $a_1, \dots, a_h$. On ordonne les $a_i$ de telle sorte que $v(a_1) \leq \dots \leq v(a_h)$. Alors $\Hdg_{O_L,\tau} (M,F) (s) = v(a_1) + \dots + v(a_s)$. La matrice de $\bigwedge^s F_\tau$ est également diagonale (pour certaines bases de $\bigwedge^s M_{\sigma^{-1} \tau}$ et $\bigwedge^s M_\tau$), avec coefficients $a_{i_1} \dots a_{i_s}$, pour $i_1 < \dots <i_s$. On en déduit que 
$$\Hdg_{O_L,\tau} (\bigwedge^s M, \bigwedge^s F) (1) = v( a_1 \dots a_s) = \Hdg_{O_L,\tau} (M,F) (s).$$
Montrons maintenant la troisième égalité. La quantité $\PR_\tau(d_\bullet) (1)$ est égale au nombre de $d_{\tau,i}$ égaux à $h$ divisés par $e$. Pour calculer le terme $\PR_\tau(d_\bullet^{(s)}) (1)$, il suffit donc de déterminer quels éléments $d_{\tau,k}^{(s)}$ sont égaux à $\binom{h}{s}$. \\
Fixons un entier $1 \leq i \leq r_\tau$. Si $k$ est compris entre $0$ et $s-1$, nous affirmons que $d_{\tau,is-k}^{(s)} = \binom{h}{s}$ si et seulement si $k \geq h-d_{\tau,i}$. En effet, si $k \geq h - d_{\tau,i}$, on a 
$$d_{\tau,is - k}^{(s)} = \mathlarger{\mathlarger{\sum}}_{j=0}^{h-d_{\tau,i}} \binom{d_{\tau,i}}{s-j} \binom{h-d_{\tau,i}}{j} = \binom{h}{s}$$
la dernière égalité étant obtenue en développant $(X+Y)^h = (X+Y)^{d_{\tau,i}} (X+Y)^{h-d_{\tau,i}}$. Si $k < h - d_{\tau,i}$, nous voulons prouver que
$$d_{\tau,is - k}^{(s)} = \mathlarger{\mathlarger{\sum}}_{j=0}^k \binom{d_{\tau,i}}{s-j} \binom{h-d_{\tau,i}}{j} < \mathlarger{\mathlarger{\sum}}_{j=0}^{h-d_{\tau,i}} \binom{d_{\tau,i}}{s-j} \binom{h-d_{\tau,i}}{j}.$$
Il suffit pour cela de prouver qu'il existe un entier $j$ avec $k+1 \leq j \leq h-d_{\tau,i}$ et $0 \leq s-j \leq d_{\tau,i}$. Puisque l'on a $h-d_{\tau,i} \geq s-d_{\tau,i}$ et $k+1 \leq s$, cela est en effet possible. \\
Le nombre d'entiers $0 \leq k \leq s-1$ tels que $d_{\tau,is - k}^{(s)} = \binom{h}{s}$ est donc égal à $\max(s-h+d_{\tau,i},0)$. On en déduit que
$$\PR_\tau (d_\bullet^{(s)}) (1) = \frac{1}{e} \sum_{i=1}^{r_\tau} \max(s-h+d_{\tau,i},0) = \PR_\tau (d_\bullet) (s).$$
\end{proof}

\subsection{Propriétés}

Si $(M,F)$ est un $F$-cristal avec une action de $O_L$ de rang $h$ satisfaisant la condition PR pour une donnée $(d_\bullet)$, alors nous avons construit trois polygones : le polygone de Hodge, le polygone de Newton et le polygone de Pappas-Rapoport. Nous avons de plus des relations entre ces trois polygones. Si $P_1$ et $P_2$ sont deux polygones à abscisses de ruptures entières définis sur $[0,h]$, on dit que $P_1 \geq P_2$ si $P_1(i) \geq P_2(i)$ pour $0 \leq i \leq h$.

\begin{theo}
\phantomsection\label{thr118}
Soit $(M,F)$ un $F$-cristal avec une action de $O_L$ de rang $h$ satisfaisant la condition PR pour une donnée $(d_\bullet)$. Alors on a les relations
$$\Newt_{O_L} (M,F) \geq \Hdg_{O_L} (M,F) \geq \PR(d_\bullet).$$
\end{theo}

\begin{proof}
Fixons un élément $\tau \in \T$. On rappelle que $\Newt_{O_L} (M,F) = \Newt_{O_L} (M_\tau,F^f)/f$ est le polygone de Newton obtenu à partir du morphisme $F^f$ agissant sur $M_{\tau}$. Nous allons montrer que l'on a 
$$\Newt_{O_L} (M_\tau,F^f) \geq \Hdg_{O_L} (M_\tau,F^f).$$
En effet, on peut supposer que la matrice de $F^f$ agissant sur $M_\tau$ est triangulaire supérieure, avec des coefficients diagonaux $\lambda_1, \dots, \lambda_h$ vérifiant $v(\lambda_1) \leq \dots \leq v(\lambda_h)$. Si on écrit $(z_{i,j})$ les coefficients de $F^f$, alors
$$\Hdg_{O_L} (M_\tau,F^f) (1) = \inf_{i,j} v(z_{i,j}) \leq v(\lambda_1) = \Newt_{O_L} (M_\tau,F^f) (1).$$
En appliquant le résultat précédent à $(\bigwedge^s M, \bigwedge^s F)$, on en déduit que le polygone $\Newt_{O_L} (M_\tau,F^f)$ est au-dessus du polygone $\Hdg_{O_L} (M_\tau,F^f)$. \\
Nous allons maintenant montrer que le polygone $\Hdg_{O_L} (M_\tau,F^f)$ est au-dessus du polygone $f \cdot \Hdg_{O_L} (M,F)$. Fixons des bases pour les modules $M_{\tau'}$ (sur $W_{O_L,\tau'} (k)$) pour tout $\tau' \in \T$, et soit $Y_{\tau'} = (y_{\tau',i,j})_{1 \leq i,j \leq h}$ la matrice de $F_{\tau'} : M_{\sigma^{-1} \tau'} \to M_{\tau'}$. Alors la matrice de $F^f$ agissant sur $M_\tau$ est 
$$ X_\tau = \prod_{i=1} ^ f Y_{\sigma^i \tau}^{\sigma^{i-1}}  $$
où $Y_{\sigma^i \tau}^{\sigma^{i-1}}$ est la matrice avec coefficients $(\sigma^{i-1} (y_{\sigma^i \tau,i,j}))_{i,j}$. Soit $(x_{\tau,i,j})_{i,j}$ les coefficients de la matrice de $X_\tau$. Alors on a $\Hdg_{O_L} (M_\tau,F^f) (1) = \inf_{i,j} v(x_{\tau,i,j})$ et $\Hdg_{O_L,\tau'} (M,F) (1) = \inf_{i,j} v(y_{\tau',i,j})$ pour tout $\tau' \in \T$. Cela prouve que
$$\Hdg_{O_L} (M_\tau,F^f) (1) \geq f \cdot \Hdg_{O_L} (M,F) (1).$$
Soit maintenant $s$ un entier entre $1$ et $h$. En appliquant le résultat précédent à $(\bigwedge^s M, \bigwedge^s F)$, on obtient que $\Hdg_{O_L} (\bigwedge^s M_\tau,(\bigwedge^s F)^f) (1) \geq f \cdot \Hdg_{O_L} (\bigwedge^s M,\bigwedge^s F) (1)$. Mais d'apr\`es la proposition $\ref{ext}$, on a $\Hdg_{O_L} (\bigwedge^s M,\bigwedge^s F) (1) = \Hdg_{O_L} (M,F) (s)$ ; en appliquant cette m\^eme proposition au $\sigma^f$-F-cristal $(M_\tau,F^f)$, on obtient 
$$\Hdg_{O_L} (\bigwedge^s M_\tau,(\bigwedge^s F)^f) (1) = \Hdg_{O_L} (M_\tau, F^f) (s)$$
Cela prouve donc l'inégalité entre polygones
$$\Hdg_{O_L} (M_\tau,F^f) \geq f \cdot  \Hdg_{O_L} (M,F)$$
En conclusion, nous avons les inégalités
$$\Newt_{O_L} (M,F) = \frac{1}{f} \Newt_{O_L} (M_\tau,F^f) \geq \frac{1}{f} \Hdg_{O_L} (M_\tau,F^f) \geq \Hdg_{O_L} (M,F).$$
Montrons enfin que le polygone $\Hdg_{O_L} (M,F)$ est au-dessus du polygone $\PR(d_\bullet)$. Pour cela, nous allons montrer que le polygone $\Hdg_{O_L,\tau} (M,F)$ est au-dessus du polygone $\PR_\tau (d_\bullet)$ pour tout $\tau \in \T$. Soit donc $\tau$ un élément de $\T$. Nous devons prouver que pour tout entier $1 \leq s \leq h$, on a 
$$\Hdg_{O_L,\tau} (M,F) (s) \geq \PR_\tau (d_\bullet) (s).$$
Prouvons tout d'abord le résultat pour $s=1$. Soit $j = e \PR_\tau (d_\bullet) (1)$. Alors $j$ est égal au nombre de $d_{\tau,k}$ égaux à $h$. On en déduit que $F_\tau M_{\sigma^{-1} \tau} \subset \pi^j M_\tau$. Or on peut trouver une base $(e_1, \dots, e_h)$ de $M_\tau$ telle que $(\pi^{a_1} e_1, \dots, \pi^{a_h} e_h)$ soit une base de $F_\tau M_{\sigma^{-1} \tau}$, et $e \Hdg_{O_L,\tau } (M,F) (1) = \min_k a_k$. L'inclusion $F_\tau M_{\sigma^{-1} \tau} \subset \pi^j M_\tau$ implique que $a_k \geq j$ pour tout $1 \leq k \leq h$, et donc $e \Hdg_{O_L,\tau} (M,F) (1) \geq e \PR_\tau (d_\bullet) (1)$. \\
Soit maintenant $s$ un entier compris entre $1$ et $h$. En appliquant le résultat précédent au cristal $(\bigwedge^s M, \bigwedge^s F)$, on obtient
$$\Hdg_{O_L,\tau} (\bigwedge^s M, \bigwedge^s F) (1) \geq \PR_\tau(d_\bullet^{(s)}) (1).$$  
La proposition $\ref{ext}$ permet de conclure.
\end{proof}

Le polygone de Newton est donc toujours au-dessus du polygone de Hodge. De plus, le fait que ces deux polygones ont un point commun implique une condition très forte sur le cristal. En effet, le théorème de décomposition Hodge-Newton reste valable dans ce cadre.

\begin{theo}
\phantomsection\label{thr119}
Soit $(M,F)$ un $F$-cristal avec action de $O_L$ de rang $h$. Soient $a_{\tau,1} \leq \dots \leq a_{\tau,h}$ les pentes du polygones $\Hdg_{O_L,\tau} (M,F)$ pour $\tau \in \mathcal{T}$ et $\lambda_1 \leq \dots \leq \lambda_h$ celles de $\Newt_{O_L} (M,F)$. On suppose qu'il existe un point $(i,j) \dans \NN \times \frac{1}{ef}\NN$ qui est un point de rupture du polygone de Newton $\Newt_{O_L} (M,F)$ qui se trouve également sur le polygone $\Hdg_{O_L} (M,F)$. Alors il existe une unique décomposition de $(M,F)$ en somme directe $(M,F) = (M_1,F_1) \oplus (M_2,F_2)$, où $(M_i,F_i)$ sont des $F$-cristaux avec action de $O_L$, et tels que
\begin{itemize}
\item $M_1$ est de rang $i$, le polygone $\Newt_{O_L}(M_1,F_1)$ a pour pentes $\lambda_1, \dots, \lambda_i$ et le polygone $\Hdg_{O_L,\tau} (M_1,F_1)$ a pour pentes $a_{\tau,1}, \dots, a_{\tau,i}$ pour tout $\tau \in \mathcal{T}$.
\item $M_2$ est de rang $h-i$, le polygone $\Newt_{O_L}(M_2,F_2)$ a pour pentes $\lambda_{i+1}, \dots, \lambda_h$ et le polygone $\Hdg_{O_L,\tau} (M_2,F_2)$ a pour pentes $a_{\tau,i+1}, \dots, a_{\tau,h}$ pour tout $\tau \in \mathcal{T}$.
\end{itemize}
\end{theo}

\begin{proof}
Ce théorème est analogue au théorème obtenu par Katz (\cite{Katz} Théor\`eme 1.6.1) dans le cas des cristaux sans action, et la démonstration est similaire. Soit $\tau \in \mathcal{T}$, et considérons le cristal $(M_\tau, F^f)$. On a les inégalités
$$\Newt_{O_L} (M_\tau,F^f) = f \cdot \Newt_{O_L} (M,F) \geq \Hdg_{O_L}(M_\tau,F^f) \geq f \cdot \Hdg_{O_L} (M,F) $$
Le point $(i,fj)$ est donc un point de rupture du polygone $\Newt_{O_L}(M_\tau,F^f)$ qui est également sur le polygone $\Hdg_{O_L}(M_\tau,F^f)$. Le résultat de Katz \cite{Katz} Théor\`eme 1.6.1 montre que l'on peut décomposer de manière unique ce cristal en $M_\tau = M_{\tau,1} \oplus M_{\tau,2}$, avec $M_{\tau,i}$ stable par $F^f$. De plus les pentes du polygone $\Newt_{O_L}(M_{\tau,1},F^f)$ (resp. du polygone $\Hdg_{O_L}(M_{\tau,1},F^f)$) sont égales aux $i$ premières pentes du polygone $\Newt_{O_L}(M_\tau,F^f)$ (resp. du polygone $\Hdg_{O_L}(M_\tau,F^f)$). On rappelle les principales étapes de la preuve de ce résultat. \\
On traite tout d'abord le cas où $i=1$. Dans ce cas, le morphisme $F^f$ est divisible par $\pi^{efj}$, et quitte à diviser par cet élément, on se ramène au cas où $j=0$. On montre alors qu'il existe une unique droite $L \subset M_\tau$ telle que $F$ agisse par un automorphisme sur $L$. De manière explicite, on a 
$$L / p^n L = \bigcap_{m \geq 0} F^{mf} (M_\tau / p^n M_\tau) $$
On peut ensuite montrer que la droite $L$ admet un unique supplémentaire dans $M_\tau$ stable par $F^f$, ce qui prouve le résultat annoncé pour $i=1$. \\
Pour le cas général, on considère le cristal $(\bigwedge^i M_\tau, \bigwedge^i F^f)$. D'après le cas précédent, il existe une unique décomposition $\bigwedge^i M_\tau = N_1 \oplus N_2$, avec chaque $N_i$ stable par $F^f$, avec la propriété que $N_1$ est de rang $1$, et la pente des polygones de Newton et de Hodge de $(N_1,F^f)$ est égale à $j$. On veut prouver qu'il existe $M_{\tau,1} \subset M$ tel que $N_1 = \bigwedge^i M_{\tau,1}$. Il suffit pour cela de prouver que $N_1$ satisfait les équations de Plücker. Mais après inversion de $p$, la matrice de $F^f$ sur $M_\tau$ peut être diagonalisée donc $N_1[1/p]$ est uniquement déterminé et satisfait les équations de Plücker. On en déduit l'existence de $M_{\tau,1} \subset M_\tau$ de rang $i$ et stable par $F^f$, avec les pentes du polygone $\Newt_{O_L}(M_{\tau,1},F^f)$ (resp. du polygone $\Hdg_{O_L}(M_{\tau,1},F^f)$) sont égales aux $i$ premières pentes du polygone $\Newt_{O_L}(M_\tau,F^f)$ (resp. du polygone $\Hdg_{O_L}(M_\tau,F^f)$). De plus, ce sous-module est uniquement déterminé, et on peut prouver qu'il existe un unique supplémentaire $M_{\tau,2}$ stable par $F^f$. \\
On définit ensuite pour $i \in \{1,2\}$
$$M_i = \bigoplus_{\tau \in \mathcal{T}} M_{\tau,i}$$
Comme le Frobenius $F : M_{\sigma^{-1} \tau} \to M_{\tau}$ est une isogénie, il stabilise les polygones de Newton, et donc il envoie $M_{\sigma^{-1} \tau,i}$ dans $M_{\tau,i}$ pour $\tau \in \mathcal{T}$ et $i \in \{1,2\}$. En d'autres termes, les modules $M_1$ et $M_2$ sont stables par le Frobenius. La partie sur les polygones de Newton de ces modules est claire. De plus, on a pour tout $\tau \in \mathcal{T}$ l'inégalité $\Hdg_{O_L,\tau} (M_1,F) (i) \geq \Hdg_{O_L,\tau} (M,F) (i)$. Si $\tau_0$ est un élément fixé de $\mathcal{T}$, on a alors
$$ f \cdot j = \Hdg_{O_L} (M_{\tau_0},F^f) (i) = \Hdg_{O_L} (M_{\tau_0,1},F^f) (i) \geq \sum_{\tau \in \mathcal{T}} \Hdg_{O_L,\tau} (M_1,F) (i) \geq \sum_{\tau \in \mathcal{T}} \Hdg_{O_L,\tau} (M,F) (i) = f \cdot j$$
On a donc les égalités $\Hdg_{O_L,\tau} (M_1,F) (i) = \Hdg_{O_L,\tau} (M,F) (i)$, ce qui prouve que les pentes du polygone $\Hdg_{O_L,\tau} (M_1,F)$ sont égales aux $i$ premières pentes du polygone $\Hdg_{O_L,\tau} (M,F)$ pour tout $\tau \in \mathcal{T}$.
\end{proof}

\section{Invariants de Hasse $\mu$-ordinaires}
\phantomsection\label{sect3}
\subsection{Construction sur un corps parfait} \phantomsection\label{corps}

On suppose toujours donné une extension finie $L$ de $\mathbb{Q}_p$, et soit $k$ un corps parfait de caractéristique $p$ contenant le corps résiduel de $L$. Soit $G \to \Spec(k)$ un groupe $p$-divisible, et on suppose $G$ muni d'une action de $O_L$, c'est à dire qu'il existe un morphisme $\ZZ_p$-linéaire,
\[ \iota_G : O_L \fleche \End(G)\]

Soit $(M,F,V)$ le module de Dieudonné (contravariant) de $G$ (\cite{Fon} partie III), alors $(M,F)$ est un $F$-cristal avec action de $O_L$ au sens de la section précédente. 
En particulier on peut associer à $(M,F)$, et donc à $G$, des polygones de Hodge et Newton, $\Hdg_{O_L}(G),\Newt_{O_L}(G)$ qui ont même points initial $(0,0)$ 
et mêmes points terminaux, et tels que le polygone de Newton est au-dessus du polygone de Hodge. \\
L'algèbre de Lie duale $\omega_G$ de $G$ est un $k$-espace vectoriel muni d'une action de $O_L$, et se décompose en
$$\omega_G = \bigoplus_{\tau \in \T} \omega_{G,\tau}$$
On rappelle que $\T$ est l'ensemble des plongements de l'extension maximale non ramifiée $L^{nr}$ de $L$ dans $\overline{\QQ_p}$, et l'anneau des entiers de $L^{nr}$ agit par $\tau$ sur $\omega_{G,\tau}$. On se donne une collection d'entiers $\mu = (d_{\tau,i})_{\tau \in \T,1 \leq i \leq e}$. La condition de Pappas-Rapoport pour le groupe $p$-divisible $G$ est définie comme suit.

\begin{defi}
\phantomsection\label{defPRG}
On dit que $G$ satisfait la condition de Pappas-Rapoport (PR en abrégé) pour la donnée $\mu$ s'il existe une filtration pour tout $\tau \in \T$
\[ 0=\omega_{G,\tau}^{[0]} \subset \omega_{G,\tau}^{[1]} \subset \omega_{G,\tau}^{[2]} \subset \dots \subset \omega_{G,\tau}^{[e]} = \omega_{G,\tau}\]
telle que
\begin{itemize}
\item $\omega_{G,\tau}^{[i]}$ est un sous-espace vectoriel de $\omega_{G,\tau}$ pour tout $1 \leq i \leq e$ et $\tau \in \T$.
\item $\pi \cdot \omega_{G,\tau}^{[i]} \subset \omega_{G,\tau}^{[i-1]}$ pour tout $1 \leq i \leq e$ et $\tau \in \T$.
\item si on note $\Gr^{[i]}\omega_{G,\tau} := \omega_{G,\tau}^{[i]}/\omega_{G,\tau}^{[i-1]}$, alors $\dim_k \Gr^{[i]}\omega_{G,\tau} = d_{\tau,i}$ pour tout $1 \leq i \leq e$ et $\tau \in \T$.
\end{itemize}
\end{defi}

%Le module $M$ se décompose en $M = \oplus_{\tau \in \T} M_\tau$, chaque $M_{\tau}$ étant un $W_{O_L,\tau}(k)$-module libre de rang $h$ indépendant de $\tau$. Le Frobenius induit des morphismes $\sigma$-linéaires $F_\tau : M_{\sigma^{-1} \tau} \to M_\tau$, et le Verschiebung des morphismes $\sigma^{-1}$-linéaires $V_\tau : M_\tau \to M_{\sigma^{-1} \tau}$. Ces morphismes vérifient $F_\tau \circ V_\tau = p \Id_{M_\tau}$ et $V_\tau \circ F_\tau = p \Id_{M_{\sigma^{-1} \tau}}$. \\
Comme $M/FM \simeq \omega_G$, la condition précédente est équivalente au fait que le $F$-cristal $(M,F)$ (le module de Dieudonné de $G$) vérifie la condition de Pappas-Rapoport pour la donnée $(d_{\tau,i})_{\tau \in \mathcal T,1 \leq i \leq e}$. \\
On souhaite définir des sections de certaines puissances du faisceau $\omega_G$. Pour se faire, il est possible de travailler avec le module de Dieudonné $M$ en utilisant l'isomorphisme précédent. Cependant, dans le cas général, nous serons amené à utiliser le cristal de Dieudonné de $G$ et par souci de clarté, nous travaillerons également avec le cristal de Dieudonné dans cette partie. Soit $D$ le cristal de Dieudonné (\cite{BBM} partie $3.3$) évalué sur l'anneau $W(k)$ (muni de ses puissances divisées canoniques). C'est un $W(k)$-module libre muni d'une action de $O_L$, et il est relié simplement au module de Dieudonné usuel par la formule $D \simeq M^{(\sigma)}$, où l'exposant signifie un twist par le Frobenius (\cite{BBM} théor\`eme $4.2.14$). Ce module se décompose en $D = \oplus_{\tau \in \T} D_\tau$ ; de plus le Frobenius induit des morphismes $\sigma$-linéaires $F_\tau : D_{\sigma^{-1} \tau} \to D_\tau$, et le Verschiebung des morphismes $\sigma^{-1}$-linéaires $V_\tau : D_\tau \to D_{\sigma^{-1} \tau}$. Ces morphismes vérifient $F_\tau \circ V_\tau = p \Id_{D_\tau}$ et $V_\tau \circ F_\tau = p \Id_{D_{\sigma^{-1} \tau}}$. De plus, on a $V D / p D \simeq \omega_G$, et $V_{\sigma \tau} D_{\sigma \tau} / p D_\tau \simeq \omega_{G,\tau}$ pour tout $\tau \in \T$.

On note pour tout $\tau \in \T$ et $1 \leq i \leq e$,
\[ \Fil^{[i]} D_\tau = \phi_\tau^{-1}(\omega_{G,\tau}^{[i]})\]
où $\phi_\tau : D_\tau \fleche D_\tau/pD_\tau$ est la réduction modulo $p$, et où on voit $\omega_{G,\tau}$ comme un sous objet de $D_\tau/pD_\tau$, égal à $V_{\sigma \tau} D_{\sigma \tau}/pD_\tau$. En particulier, $\Fil^{[e]} D_\tau = V_{\sigma \tau} D_{\sigma\tau}$. 
La condition sur $\pi$ sur l'algèbre de Lie assure que $\pi^j(\Fil^{[j]}D_\tau) \subset pD_\tau$ pour tout $j,\tau$.
Comme $D_\tau$ est un $W_{O_L,\tau}(k)$-module libre, on en déduit le lemme suivant,

\begin{lemm} 
Soit $\tau \in \T$ et $1 \leq i \leq e$. Alors $\Fil^{[i]}D_\tau \subset \pi^{e-i}D_\tau$.
\end{lemm}

On va construire des morphismes sur des puissances extérieures des modules précédent. Si $\tau \in \T$ et $d$ est un entier plus grand que $1$, on considère le module $\bigwedge^d D_\tau$, le produit extérieur étant pris sur $W_{O_L,\tau}(k)$ (ce sera le cas de tous les produits extérieurs dans cette section). Puisque $D_\tau$ est un $W_{O_L,\tau}(k)$-module libre, on a en particulier une inclusion pour $1 \leq i \leq e$,
\[ \bigwedge^d \Fil^{[i]} D_\tau \subset \bigwedge^d D_\tau \]

Avant de poursuivre, commençons par le lemme suivant.

\begin{lemm}
Soit $\tau \in \T$, $M$ un $W_{O_L,\tau}(k)$-module libre de rang fini $r$ et $N \subset M$ un sous-module libre. On suppose que $\pi M \subset N$, et notons $\dim_k M/N = f$. Alors pour $d \dans \NN$, on a $\pi^{\min(d,f)} \bigwedge^d M \subset~\bigwedge^d N$.
\end{lemm}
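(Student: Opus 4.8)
Le plan est d'exploiter le fait que $W_{O_L,\tau}(k)$ est un anneau de valuation discrète d'uniformisante $\pi$, ce qui ramène l'énoncé à un calcul sur une base adaptée. Commençons par observer que, puisque $\pi M \subset N \subset M$, le quotient $M/N$ est un quotient de $M/\pi M \simeq k^r$, donc est bien un $k$-espace vectoriel, de dimension $f$ par hypothèse. On applique alors le théorème des diviseurs élémentaires (comme dans la preuve de la proposition calculant les gradués de $\bigwedge^s M_\tau$) : il existe une base $(e_1, \dots, e_r)$ de $M$ sur $W_{O_L,\tau}(k)$ telle que $N$ soit le sous-module engendré par $(\pi e_1, \dots, \pi e_f, e_{f+1}, \dots, e_r)$. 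C'est ici que servent à la fois l'inclusion $\pi M \subset N$ (qui force les diviseurs élémentaires à être $\pi$ ou $1$) et l'égalité $\dim_k M/N = f$ (qui fixe à $f$ le nombre de diviseurs égaux à $\pi$).

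Une fois cette base fixée, le module $\bigwedge^d M$ est libre, de base les éléments $e_{i_1} \wedge \dots \wedge e_{i_d}$ pour $i_1 < \dots < i_d$, tandis que $\bigwedge^d N$ est engendré par les produits extérieurs des générateurs de $N$. Un tel générateur s'écrit donc $\pi^{m}\, e_{i_1} \wedge \dots \wedge e_{i_d}$, où $m = \#\{\, j : i_j \leq f \,\}$ est le nombre d'indices inférieurs ou égaux à $f$ apparaissant dans le produit.

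Il reste alors à vérifier, pour chaque multi-indice $i_1 < \dots < i_d$, que $\pi^{\min(d,f)}\, e_{i_1} \wedge \dots \wedge e_{i_d} \in \bigwedge^d N$. Or l'entier $m$ associé satisfait $m \leq d$ (il y a $d$ indices en tout) et $m \leq f$ (il n'y a que $f$ indices inférieurs ou égaux à $f$ disponibles), d'où $m \leq \min(d,f)$. Puisque $\bigwedge^d N$ contient $\pi^m\, e_{i_1} \wedge \dots \wedge e_{i_d}$, il contient aussi $\pi^{\min(d,f)}\, e_{i_1} \wedge \dots \wedge e_{i_d} = \pi^{\min(d,f)-m}\cdot \pi^m\, e_{i_1} \wedge \dots \wedge e_{i_d}$. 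Comme ces éléments engendrent $\bigwedge^d M$, on conclut que $\pi^{\min(d,f)} \bigwedge^d M \subset \bigwedge^d N$.

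Il n'y a pas ici d'obstacle sérieux : le seul point un peu substantiel est le choix de la base adaptée via les diviseurs élémentaires, mais c'est exactement l'outil déjà utilisé dans cette section ; une fois ce choix fait, tout se réduit au décompte élémentaire $m \leq \min(d,f)$.
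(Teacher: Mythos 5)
Votre preuve est correcte et suit essentiellement le même chemin que celle de l'article : choix d'une base adaptée $(e_1,\dots,e_r)$ de $M$ telle que $(\pi e_1,\dots,\pi e_f,e_{f+1},\dots,e_r)$ engendre $N$, puis décompte du nombre $m\leq\min(d,f)$ d'indices $\leq f$ dans chaque tenseur élémentaire. La seule différence est cosmétique : l'article traite le cas $d\leq f$ comme évident, tandis que vous unifiez les deux cas par le même argument.
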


\begin{proof}
Il existe une base $(e_1,\dots,e_r)$ de $M$ tel que $(\pi e_1,\dots,\pi e_f,e_{f+1},\dots,e_r)$ soit une base de $N$. Si $d \leq f$ le résultat est évident ; supposons donc $d > f$. Alors $\bigwedge^d M$ est engendré par les images des tenseurs élémentaires $e_{j_1}\wedge \dots \wedge e_{j_d}$, avec 
$1 \leq j_1 < j_2 < \dots < j_d \leq r$.
En particulier, $|\{k : j_k \leq f\}| \leq f$ et donc $\pi^f(e_{j_1}\wedge \dots \wedge e_{j_d}) \dans \bigwedge^d N$. On en déduit le résultat.
\end{proof}

%\begin{lemm} 
%\phantomsection\label{lempimin}
%Soit $d \dans \NN$. Supposons que $d > d_{\tau,i}$, alors on a l'égalité suivante,
%\[\bigwedge^d \Fil^{[i]}D_\tau =
%\im(\bigwedge^{d_{\tau,i}} \Fil^{[i]}D_\tau \otimes \bigwedge^{d-d_{\tau,i}} \Fil^{[i-1]}D_\tau \fleche \bigwedge^d D_\tau).\]
%\end{lemm}

%\begin{proof} 
%Il suffit de choisir une base $(e_1,\dots,e_h)$ de $\Fil^{[i]}D_\tau$ telle que $(\pi e_1,\dots,\pi e_{d_{\tau,i}},e_{d_{\tau,i}+1},\dots,e_h)$ soit une base de $\Fil^{[i+1]}D_\tau$.
%Dans ce cas, pour tout tenseur $e_{i_1} \wedge \dots \wedge e_{i_d}$, alors $|\{k : j_k > d_{\tau,i}\}| \geq d - d_{\tau,i}$, et donc l'image de ce tenseur dans $\bigwedge^d D_\tau$ 
%est bien dans l'image annoncée.
%\end{proof}

On en déduit le corollaire suivant,

\begin{coro}
\phantomsection\label{corpi}
Soit $\tau \in \T$ et $1 \leq i \leq e$. La multiplication par  $\pi^{\min(d,d_{\tau,i})}$ envoie $\bigwedge^d\Fil^{[i]}D_\tau$ dans $\bigwedge^d\Fil^{[i-1]}D_\tau$.
\end{coro}

%\begin{proof}
%Si $d \leq d_{\tau,i}$, c'est évident. Sinon, l'application $\pi^{d_{\tau,i}} \otimes \id$ envoie $\bigwedge^{d_{\tau,i}} \Fil^{[i]}D_\tau \otimes \bigwedge^{d-d_{\tau,i}} \Fil^{[i-1]}D_\tau$ 
%dans $\bigwedge^{d_{\tau,i}} \Fil^{[i-1]}D_\tau \otimes \bigwedge^{d-d_{\tau,i}} \Fil^{[i-1]}D_\tau$ d'après la condition (PR). D'après le lemme précédent, il suffit donc de vérifier 
%qu'elle passe au quotient, mais la puissance extérieure est prise au dessus de $W_{O_L}(k)$, donc la multiplication par $\pi$ commute à la formation de $\bigwedge$.
%\end{proof}

\begin{prop} 
\phantomsection\label{pro35}
Soit $\tau \in \T$ et $d \geq 1$ un entier. Alors il existe une application $\sigma^{-1}$-linéaire,
\[ \zeta_\tau^d : \bigwedge^d D_{\sigma\tau} \fleche \bigwedge^dD_\tau\]
telle que si 
\[ k_{\tau,d} = \sum_{i=1}^e \max(d-d_{\tau,i},0)\]
alors $\pi^{k_{\tau,d}}\zeta_\tau^d = \bigwedge^d V_{\sigma \tau}$.
\end{prop}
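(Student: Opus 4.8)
The plan is to show that the $\sigma^{-1}$-linear map $\bigwedge^d V_{\sigma\tau} : \bigwedge^d D_{\sigma\tau} \to \bigwedge^d D_\tau$ has image contained in $\pi^{k_{\tau,d}} \bigwedge^d D_\tau$, and then to define $\zeta_\tau^d$ by dividing by $\pi^{k_{\tau,d}}$. Since $D_\tau$ is a free $W_{O_L,\tau}(k)$-module over the discrete valuation ring $W_{O_L,\tau}(k)$, the module $\bigwedge^d D_\tau$ is free, hence $\pi$-sans torsion, so the division is legitimate and determines $\zeta_\tau^d$ uniquely. As $V_{\sigma\tau}$ is $\sigma^{-1}$-linéaire and $\sigma(\pi) = \pi$, the resulting map $\zeta_\tau^d$ is again $\sigma^{-1}$-linéaire, which gives all the asserted properties once the divisibility is established.

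To establish the divisibility I would use the full filtration of Pappas-Rapoport on the target $D_\tau$. Recall that $\Fil^{[0]} D_\tau = pD_\tau$ (because $\omega_{G,\tau}^{[0]} = 0$) and that $\Fil^{[e]} D_\tau = V_{\sigma\tau} D_{\sigma\tau}$ is the image of $V_{\sigma\tau}$. Since each $V_{\sigma\tau}(x)$ lies in $\Fil^{[e]} D_\tau$, the image of $\bigwedge^d V_{\sigma\tau}$ is contained in the submodule $\bigwedge^d \Fil^{[e]} D_\tau \subset \bigwedge^d D_\tau$. It therefore suffices to prove the inclusion $\bigwedge^d \Fil^{[e]} D_\tau \subset \pi^{k_{\tau,d}} \bigwedge^d D_\tau$.

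For this I apply Corollaire \ref{corpi} repeatedly along the chain $\Fil^{[0]} D_\tau \subset \dots \subset \Fil^{[e]} D_\tau$. Each step gives $\pi^{\min(d, d_{\tau,i})} \bigwedge^d \Fil^{[i]} D_\tau \subset \bigwedge^d \Fil^{[i-1]} D_\tau$, so composing from $i=e$ down to $i=1$ yields
$$\pi^{\sum_{i=1}^e \min(d, d_{\tau,i})} \bigwedge^d \Fil^{[e]} D_\tau \subset \bigwedge^d \Fil^{[0]} D_\tau = \bigwedge^d (pD_\tau) = p^d \bigwedge^d D_\tau.$$
Since $p$ and $\pi^e$ differ by a unit of $W_{O_L,\tau}(k)$ (as $v(p)=1=e\cdot v(\pi)$), one has $p^d \bigwedge^d D_\tau = \pi^{ed} \bigwedge^d D_\tau$, whence $\bigwedge^d \Fil^{[e]} D_\tau \subset \pi^{ed - \sum_i \min(d, d_{\tau,i})} \bigwedge^d D_\tau$. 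The exponent equals $\sum_{i=1}^e \bigl(d - \min(d, d_{\tau,i})\bigr) = \sum_{i=1}^e \max(d - d_{\tau,i}, 0) = k_{\tau,d}$, exactly the desired bound.

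I do not expect a serious obstacle here: the content is entirely in the iterated use of Corollaire \ref{corpi} together with the elementary identity $d - \min(d, d_{\tau,i}) = \max(d - d_{\tau,i}, 0)$. The only point requiring a little care is the passage from the chain computation, which a priori lands in $p^d \bigwedge^d D_\tau$, to a pure power of $\pi$; this rests on the equality of valuations $v(p) = e\cdot v(\pi)$ in $W_{O_L,\tau}(k)$, and on the freeness of $\bigwedge^d D_\tau$ which legitimizes the final division defining $\zeta_\tau^d$.
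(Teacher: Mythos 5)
Votre preuve est correcte et suit essentiellement la même démarche que celle du texte : l'image de $\bigwedge^d V_{\sigma\tau}$ est contenue dans $\bigwedge^d \Fil^{[e]} D_\tau$, on descend la filtration par applications itérées du corollaire \ref{corpi}, on aboutit dans $\bigwedge^d \Fil^{[0]} D_\tau = \pi^{ed}\bigwedge^d D_\tau$, et le bilan $ed - \sum_i \min(d,d_{\tau,i}) = k_{\tau,d}$ donne la divisibilité voulue, la division étant licite par liberté de $\bigwedge^d D_\tau$. La seule différence est de présentation : le texte définit $\zeta_\tau^d$ comme une composée explicite de flèches, tandis que vous établissez d'abord l'inclusion $\im(\bigwedge^d V_{\sigma\tau}) \subset \pi^{k_{\tau,d}}\bigwedge^d D_\tau$ puis divisez en une seule fois, ce qui revient au même.
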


\begin{proof} 
En effet, l'application 
\[ \bigwedge^d V_{\sigma \tau} : \bigwedge^d D_{\sigma\tau} \fleche \bigwedge^d D_\tau\]
a son image dans $\bigwedge^d\Fil^{[e]}D_\tau$.
D'après le corollaire précédent, pour tout $i \dans \{1,\dots,e\}$, l'application $\pi^{\min(d,d_{\tau,i})}$ envoie
$\bigwedge^d\Fil^{[i]}D_\tau$ dans $\bigwedge^d\Fil^{[i-1]}D_\tau$.
En particulier, on peut faire la composée suivante, notée $\zeta_\tau^d$,

\begin{center}
\begin{tikzpicture}[description/.style={fill=white,inner sep=2pt}] 
\matrix (m) [matrix of math nodes, row sep=3em, column sep=2.5em, text height=1.5ex, text depth=0.25ex] at (0,0)
{ 
\bigwedge^{d} D_{\sigma\tau} & & & \bigwedge^{d} \Fil^{[e]} D_\tau & & \bigwedge^{d} D_\tau \\
& & & \bigwedge^{d} \Fil^{[e-1]} D_\tau& & \\
& & & \bigwedge^{d} \Fil^{[1]} D_\tau& & \\
 & & & \bigwedge^{d} \Fil^{[0]}D_\tau & & \\
 };

%\draw[double,double distance=5pt] (m-1-1)  (m-1-3);
\path[->,font=\scriptsize] 
(m-1-1) edge node[auto] {$\bigwedge^{d} V_{\sigma \tau}$} (m-1-4)
(m-1-4) edge node[auto] {$\pi^{\min(d,d_{\tau,e})}$} (m-2-4)
(m-3-4) edge node[auto,left] {$\pi^{\min(d,d_{\tau,1})}$} (m-4-4)
(m-4-4) edge node[auto,right] {$\frac{1}{\pi^{ed}}$} (m-1-6)
;
\path[dashed,->,font=\scriptsize] 
(m-2-4) edge node[auto,left] {$\pi^{\min(d,d_{\tau,2})} \circ \dots \circ \pi^{\min(d,d_{\tau,e-1})}$} (m-3-4)
;
\end{tikzpicture}
\end{center}

En effet, $\bigwedge^{d} \Fil^{[0]}D_\tau = \pi^{ed}\bigwedge^d D_\tau$ est un sous-module d'un $W_{O_L,\tau}(k)$-module libre, 
on peut donc diviser par $\pi^{ed}$ et ainsi construire l'application composée précédente,
\[ \zeta_\tau^d : \bigwedge^d D_{\sigma\tau} \fleche \bigwedge^d D_\tau\]
Finalement, on a multiplié $\bigwedge^d V_{\sigma \tau}$ par $\pi^{\min(d,d_{\tau,i})}$ pour tout $i$, puis divisé par $\pi^{ed}$, ce qui est bien ce qui était annoncé.
\end{proof}

Fixons un élément $\tau \in \T$, et $i$ un entier compris entre $1$ et $e$ tel que $d_{\tau,i} \neq 0$. Notre but est de construire une application $\bigwedge^{d_{\tau,i}} \Fil^{[i]}D_\tau \fleche \bigwedge^{d_{\tau,i}} \Fil^{[i]}D_\tau$ à partir de $V^f :\bigwedge^{d_{\tau,i}} \Fil^{[i]}D_\tau \fleche \bigwedge^{d_{\tau,i}} D_\tau$, et en divisant cette application par une puissance optimale de $\pi$, lorsque $d_{\tau,i}$ est non nul. Partant de $\bigwedge^{d_{\tau,i}} \Fil^{[i]}D_\tau$, plutôt que d'appliquer directement $V$, on peut composer les applications de descente dans la filtration, données par le corollaire \ref{corpi}, et donc considérer alors la flèche suivante,
\begin{center}
\begin{tikzpicture}[description/.style={fill=white,inner sep=2pt}] 
\matrix (m) [matrix of math nodes, row sep=3em, column sep=2.5em, text height=1.5ex, text depth=0.25ex] at (0,0)
{ 
\bigwedge^{d_{\tau,i}} \Fil^{[i]} D_\tau & & \bigwedge^{d_{\tau,i}} D_\tau\\
\bigwedge^{d_{\tau,i}} \Fil^{[i-1]} D_\tau & & \\
 & & \\
\bigwedge^{d_{\tau,i}} \Fil^{[1]} D_\tau & &\\
\bigwedge^{d_{\tau,i}} \Fil^{[0]} D_\tau  & & \\
 };

%\draw[double,double distance=5pt] (m-1-1)  (m-1-3);
\path[->,font=\scriptsize] 
(m-1-1) edge node[auto,left] {$\pi^{\min(d_{\tau,i},d_{\tau,i})} = \pi^{d_{\tau,i}}$} (m-2-1)
(m-5-1) edge node[auto,right] {$\frac{1}{\pi^{ed_{\tau,i}}}$} (m-1-3)
(m-4-1) edge node[auto,left] {$\pi^{\min(d_{\tau,i},d_{\tau,1})}$} (m-5-1)
;
\path[dashed,->,font=\scriptsize]
(m-2-1) edge node[auto,left] {$\pi^{\min(d_{\tau,i},d_{\tau,2})}\circ\dots\circ\pi^{\min(d_{\tau,i},d_{\tau,i-1})}$} (m-4-1)
; 
\end{tikzpicture}
\end{center}
Cette composée correspond à une division par $\pi^{ed_{\tau,i} - \sum_{j \leq i} \min(d_{\tau,i},d_{\tau,j})}$, et nous noterons $Div_{\tau,i}$ cette application. Nous pouvons alors composer celle-ci avec les applications $\zeta_{\tau'}^{d_{\tau,i}}$ construites dans la proposition \ref{pro35}. Plus précisément, on peut étudier le morphisme 
\[(\bigwedge^{d_{\tau,i}} V_{\sigma \tau}) \circ \zeta_{\sigma\tau}^{d_{\tau,i}} \circ \dots \circ  \zeta_{\sigma^{-1}\tau}^{d_{\tau,i}} \circ Div_{\tau,i} : \bigwedge^{d_{\tau,i}} \Fil^{[i]} D_\tau \fleche \bigwedge^{d_{\tau,i}} \Fil^{[e]}D_\tau\]
Nous allons enfin utiliser la multiplication par une puissance de $\pi$ pour arriver dans $\bigwedge^{d_{\tau,i}} \Fil^{[i]} D_\tau$. Notons $Mul_{\tau,i}$ l'application composée
\begin{center}
\begin{tikzpicture}[description/.style={fill=white,inner sep=2pt}] 
\matrix (m) [matrix of math nodes, row sep=3em, column sep=2.5em, text height=1.5ex, text depth=0.25ex] at (0,0)
{ 
\bigwedge^{d_{\tau,i}} \Fil^{[e]}D_\tau \\
\bigwedge^{d_{\tau,i}} \Fil^{[e-1]} D_\tau \\
 \\
\bigwedge^{d_{\tau,i}} \Fil^{[i+1]} D_\tau\\
\bigwedge^{d_{\tau,i}} \Fil^{[i]} D_\tau \\
 };

%\draw[double,double distance=5pt] (m-1-1)  (m-1-3);
\path[->,font=\scriptsize] 
(m-1-1) edge node[auto,left] {$\pi^{\min(d_{\tau,i},d_{\tau,e})}$} (m-2-1)

(m-4-1) edge node[auto,left] {$\pi^{\min(d_{\tau,i},d_{\tau,i+1})}$} (m-5-1)
;
\path[dashed,->,font=\scriptsize] 
(m-2-1) edge node[auto,right] {$\pi^{\min(d_{\tau,i},d_{\tau,i+2})} \circ \dots\circ\pi^{\min(d_{\tau,i},d_{\tau,e-1})}$} (m-4-1);
\end{tikzpicture}
\end{center}

Cette application correspond \`a une multiplication par $\pi^{\sum_{j > i} \min(d_{\tau,i},d_{\tau,j}) }$. Définissons \\
$HA_{\tau}^{[i]} :~\bigwedge^{d_{\tau,i}} \Fil^{[i]} D_\tau \to~\bigwedge^{d_{\tau,i}} \Fil^{[i]} D_\tau$ l'application composée égale \`a

$$Mul_{\tau,i} \circ (\bigwedge^{d_{\tau,i}} V_{\sigma \tau}) \circ \zeta_{\sigma\tau}^{d_{\tau,i}} \circ \dots \circ  \zeta_{\sigma^{-1}\tau}^{d_{\tau,i}} \circ Div_{\tau,i}$$

%On peut alors considérer la composée globale suivante, pour tout $\tau \dans \mathcal T$ et $1 \leq i \leq e$, que l'on note $HA_{\tau,i}$,

%\begin{center}
%\begin{tikzpicture}[description/.style={fill=white,inner sep=2pt}] 
%\matrix (m) [matrix of math nodes, row sep=3em, column sep=2.5em, text height=1.5ex, text depth=0.25ex] at (0,0)
%{ 
% & \bigwedge^{d_{\tau,i}} D_\tau  & \bigwedge^{d_{\tau,i}} D_{\sigma^{-1}\tau} &   \bigwedge^{d_{\tau,i}} D_{\sigma\tau} & \bigwedge^{d_{\tau,i}} \Fil^{[e]} D_\tau \\
%\bigwedge^{d_{\tau,i}} \Fil^{[i]} D_\tau& & & &  \bigwedge^{d_{\tau,i}} \Fil^{[i]} D_\tau \\
%\pi^{ed_{\tau,i}}\bigwedge^{d_{\tau,i}} D_\tau & & &  &  \\
% };

%\draw[double,double distance=5pt] (m-1-1)  (m-1-3);
%\path[->,font=\scriptsize] 
%(m-2-1) edge node[auto,left] {$\pi^{\min(d_{\tau,i},d_{1,\tau})} \circ \dots \pi^{d_{\tau,i}}$} (m-3-1)
%(m-3-1) edge node[auto,right] {$\pi^{-ed_{\tau,i}}$} (m-1-2)
%(m-1-2) edge node[auto] {$\zeta_{\sigma^{-1}\tau}^{d_{\tau,i}}$} (m-1-3)
%(m-1-3) edge node[auto] {$\zeta_{\sigma\tau}^{d_{\tau,i}} \circ \dots \circ \zeta_{\sigma^{-2}\tau}^{d_{\tau,i}}$} (m-1-4)
%(m-1-4) edge node[auto] {$\bigwedge^{d_{\tau,i}} V$} (m-1-5)
%(m-1-5) edge node[auto,left] {$\pi^{\min(d_{e,\tau},d_{\tau,i})} \circ \dots \circ \pi^{\min(d_{i+1,\tau},d_{\tau,i})}$} (m-2-5)
%;
%\end{tikzpicture}
%\end{center}

\begin{theo} 
\phantomsection\label{divparf}
Pour tout $\tau\in \T$ et $1 \leq i \leq e$, tels que $d_{\tau,i} \neq 0$, il existe une application $\sigma^{-f}$-linéaire,
\[ HA_{\tau}^{[i]} : \bigwedge^{d_{\tau,i}} \Fil^{[i]}D_\tau \fleche  \bigwedge^{d_{\tau,i}} \Fil^{[i]}D_\tau\]
telle que $\pi^{\sum_{j,\tau'} \max(d_{\tau,i} - d_{\tau',j},0)}HA_{\tau}^{[i]} = \bigwedge^{d_{\tau,i}}V^f$.
De plus, cette application passe au quotient en une application $\sigma^{-f}$-linéaire
\[\Ha_{\tau}^{[i]} :\det\Gr^{[i]}\omega_{G,\tau} \fleche \det \Gr^{[i]}\omega_{G,\tau}\]
\end{theo}

\begin{proof} 
Rappelons que pour $\tau' \in \T$, l'application $\zeta_{\tau'}^{d_{\tau,i}}$ est égale \`a $\bigwedge^{d_{\tau,i}} V_{\sigma \tau'}$ divisé par $\pi^{k_{\tau',d_{\tau,i}}}$, avec
$$k_{\tau',d_{\tau,i}} = \sum_{j=1}^e \max(d_{\tau,i} - d_{\tau',j},0)$$
L'application $Div_{\tau,i}$ correspond \`a une division par $\pi^{ed_{\tau,i} - \sum_{j \leq i} \min(d_{\tau,i},d_{\tau,j})}$, et $Mul_{\tau,i}$ \`a une multiplication par $\pi^{\sum_{j > i} \min(d_{\tau,i},d_{\tau,j}) }$. Au final, l'application $HA_{\tau}^{[i]}$ est égale \`a $\bigwedge^{d_{\tau,i}} V^f$ divisé par $\pi^K$, avec
$$K = \sum_{\tau' \neq \tau} \sum_{j=1}^e \max(d_{\tau,i} - d_{\tau',j},0) + e d_{\tau,i} - \sum_{j=1}^i \min(d_{\tau,i},d_{\tau,j}) - \sum_{j = i+1}^e \min(d_{\tau,i},d_{\tau,j}) 
= \sum_{\tau' \in \T} \sum_{j=1}^e \max(d_{\tau,i} - d_{\tau',j},0) $$
Soit maintenant 
\[x \dans \Ker(\bigwedge^{d_{\tau,i}}\Fil^{[i]}D_\tau \fleche \bigwedge^{d_{\tau,i}} \Gr^{[i]} \omega_{G,\tau}) = \im(\Fil^{[i-1]}D_\tau \otimes \bigwedge^{d_{\tau,i}-1} \Fil^{[i]}D_\tau \fleche \bigwedge^{d_{\tau,i}} \Fil^{[i]}D_\tau)\] 
La première application utilisée pour définir $Div_{\tau,i}$ est la multiplication par $\pi^{d_{\tau,i}} : \bigwedge^{d_{\tau,i}} \Fil^{[i]}D_\tau \to \bigwedge^{d_{\tau,i}} \Fil^{[i-1]}D_\tau$. L'image de $x$ par cette application est donc dans $\pi \cdot \bigwedge^{d_{\tau,i}} \Fil^{[i-1]}D_\tau$, et l'image de $x$ par $HA_\tau^{[i]}$ est dans $\pi \cdot \bigwedge^{d_{\tau,i}} \Fil^{[i]}D_\tau$. Puisque $\bigwedge^{d_{\tau,i}} \Gr^{[i]} \omega_{G,\tau}$ est tué par $\pi$, on en déduit que l'application $HA_\tau^{[i]}$ passe au quotient et définit une application $\sigma^{-f}$-linéaire
$$\Ha_\tau^{[i]} : \bigwedge^{d_{\tau,i}} \Gr^{[i]}\omega_{G,\tau} \fleche \bigwedge^{d_{\tau,i}} \Gr^{[i]}\omega_{G,\tau}$$
\end{proof}

L'application précédente est construite à l'aide de $V^f$, que l'on divise par une certaine puissance de $\pi$. Nous verrons dans la section $\ref{sect4}$ que cette division est optimale, c'est-à-dire qu'il existe des groupes $p$-divisibles (avec condition PR pour $\mu$) pour lesquels l'application $\Ha_\tau^{[i]}$ est non nulle. \\

\begin{defi}
Soit $\tau \in \mathcal T$ et $1 \leq i \leq e$ tels que $d_{\tau,i} = 0$, alors on pose par convention,
\[ \det \Gr^{[i]}\omega_{G,\tau} = k,\]
et définit $\Ha^{[i]}_\tau : \det \Gr^{[i]}\omega_{G,\tau}  \fleche \det \Gr^{[i]}\omega_{G,\tau}$ comme l'application $\sigma^{-f}$.
\end{defi}

L'application $\Ha_\tau^{[i]}$ induit un morphisme linéaire $\det \Gr^{[i]} \omega_{G,\tau} \to (\det \Gr^{[i]} \omega_{G,\tau})^{p^f}$, et donc une section du faisceau $(\det \Gr^{[i]} \omega_{G,\tau})^{p^f-1}$ sur $\Spec k$. Le produit de ces sections donne une section du faisceau $(\det \omega_G)^{p^f-1}$.

\begin{defi}
Soit $G$ un groupe $p$-divisible sur $k$ muni d'une action de $\mathcal O_L$. Le produit des sections construites précédemment permet de définir une section,
\[ {^\mu}\Ha(G) = \bigotimes_{\tau \in \mathcal T, 1 \leq i \leq e} \Ha^{[i]}_\tau \dans H^0(k,\det(\omega_G)^{p^f-1}),\]
appelée $\mu$-invariant de Hasse de $G$.
\end{defi}

\subsection{Cas général}

Soit $S$ un $k_L$-schéma, et soit $G \to S$ un groupe $p$-divisible tronqué d'échelon 1 (ou pour le dire plus rapidement, un $\mathcal{BT}_1$) de hauteur constante sur $S$. On suppose $G$ muni d'une action de $O_L$, et soit $\omega_G$ le faisceau conormal de $G$. Celui-ci est muni d'une action de $O_L$, et se décompose donc en $\omega_G = \oplus_{\tau \in \T} \omega_{G,\tau}$, avec $O_{L^{nr}}$ agissant par $\tau$ sur $\omega_{G,\tau}$. Les faisceaux $\omega_{G,\tau}$ sont localement libres. On rappelle que l'on s'est donné une collection d'entiers $\mu=(d_{\tau,i})_{\tau \in \T, 1 \leq i \leq e}$, et on supposera que $G$ vérifie la condition de Pappas-Rapoport suivante.

\begin{defi}
On dit que $G$ satisfait la condition de Pappas-Rapoport (PR en abrégé) pour la donnée $\mu$ s'il existe une filtration pour tout $\tau \in \T$
\[ 0=\omega_{G,\tau}^{[0]} \subset \omega_{G,\tau}^{[1]} \subset \omega_{G,\tau}^{[2]} \subset \dots \subset \omega_{G,\tau}^{[e]} = \omega_{G,\tau}\]
telle que
\begin{itemize}
\item $\omega_{G,\tau}^{[i]}$ est un sous-faisceau localement facteur direct de $\omega_{G,\tau}$ pour tout $1 \leq i \leq e$ et $\tau \in \T$.
\item $\pi \cdot \omega_{G,\tau}^{[i]} \subset \omega_{G,\tau}^{[i-1]}$ pour tout $1 \leq i \leq e$ et $\tau \in \T$.
\item si on note $\Gr^{[i]}\omega_{G,\tau} := \omega_{G,\tau}^{[i]}/\omega_{G,\tau}^{[i-1]}$, alors $\Gr^{[i]}\omega_{G,\tau}$ est localement libre de rang $d_{\tau,i}$ pour tout $1 \leq i \leq e$ et $\tau \in \T$.
\end{itemize}
\end{defi}

Regardons $\mathcal E$ l'évaluation du cristal de Berthelot-Breen-Messing (contravariant) de $G$ sur l'épaississement tautologique $S \overset{\id}{\fleche} S$ (\cite{BBM} partie $3.3$). Alors $\mathcal E$ est est un $\mathcal O_{S}$-module localement libre de rang constant muni d'une action de $O_L$. On suppose de plus que l'hypoth\`ese suivante est vérifiée.
\begin{IEEEeqnarray*}{l}
(BTO)\quad \mathcal E \text{ est un }\mathcal O_{S}\otimes_{\ZZ_p}O_L\text{-module localement libre}.
\end{IEEEeqnarray*}

\begin{rema} 
\phantomsection\label{remBTO1}
L'hypothèse (BTO) n'est pas automatique, le groupe $G$ pouvant par exemple être de $\pi$-torsion. Elle est cependant vérifiée si $G$ est la $p$-torsion d'un groupe $p$-divisible muni d'une action de $O_L$ (\cite{FGL} Lemme B.1.1).
\end{rema}

Lorsque l'on a $G_\infty$ un groupe $p$-divisible sur $\Spec k$, on étudiait dans la section précédente $D$ son cristal évalué sur $W(k)\rightarrow k$. Ici, le cristal $\mathcal E$ de $G = G_\infty[p]$ s'identifie à $D/pD$. On va alors généraliser la construction de la partie précédente dans le cas d'une base générale, et en considérant seulement un $\mathcal{BT}_1$. En particulier on en déduira que la construction de nos 
invariants pour un groupe $p$-divisible (qui vérifie donc en particulier la condition (BTO)) ne dépend que de sa $p$-torsion.
Notons comme dans la partie précédente,
\[ \mathcal E = \bigoplus_{\tau} \mathcal E_\tau\]
et on a la filtration de Hodge de $G$ (\cite{BBM} corollaire $3.3.5$)
\[ 0 \fleche \omega_G \fleche \mathcal E \fleche \omega_{G^D}^\vee \fleche 0\]
qui est compatible aux décompositions selon les plongements $\tau$. Soit $\tau \in \T$ ; on note pour $1 \leq i \leq e$
\[ \Fil^{[i]}\mathcal E_\tau := \omega_{G,\tau}^{[i]} \subset \mathcal{E}_\tau \]
et pour un $\mathcal O_S \otimes_{\ZZ_p} O_L$-module $\mathcal M$, on note $\mathcal M /\pi := \mathcal M \otimes_{O_L} (O_L/ \pi O_L)$.
La multiplication par $\pi \dans O_L$ induit une application,
\[ \pi : \Fil^{[i]}\mathcal E_\tau \fleche \Fil^{[i-1]}\mathcal E_\tau\]
qui passe alors au quotient en une application (non nécessairement triviale),
\[ \pi : \Fil^{[i]}\mathcal E_\tau/\pi \fleche \Fil^{[i-1]}\mathcal E_\tau/\pi\]
Puisque $\mathcal E$ est un $\mathcal O_S \otimes_{\ZZ_p} O_L$-module localement libre, on obtient facilement le lemme suivant.

\begin{lemm}
Soit $\tau \in \T$, et $0 \leq j \leq e-1$ un entier. Alors la multiplication par $\pi^j$ induit un isomorphisme
$$\pi^j : (\mathcal E_\tau)/\pi \to (\pi^j\mathcal E_\tau)/\pi$$
On notera $\frac{1}{\pi^j} : (\pi^j\mathcal E_\tau)/\pi \to (\mathcal E_\tau) / \pi$ l'isomorphisme inverse.
\end{lemm}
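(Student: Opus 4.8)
\section*{Proof plan}

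The plan is to check that the map is an isomorphism locally on $S$ and to reduce the question to an explicit computation over the $\tau$-component of the structural algebra. Being an isomorphism of $\mathcal O_S$-modules is a local statement, and by hypothesis (BTO) the module $\mathcal E$ is locally free over $\mathcal O_S \otimes_{\ZZ_p} O_L$. The decomposition $\mathcal E = \oplus_{\tau} \mathcal E_\tau$ corresponds to the splitting of that algebra into its $\tau$-components $R_\tau := \mathcal O_S \otimes_{O_{L^{nr}},\tau} O_L$, so each $\mathcal E_\tau$ is locally free over $R_\tau$; I may thus work locally and assume $\mathcal E_\tau \simeq R_\tau^{\,n}$ is free.

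First I would identify the ring $R_\tau$ explicitly. Since $S$ is a $k_L$-scheme and $k_L = O_{L^{nr}}/p$, we have $p = 0$ in $\mathcal O_S$; writing $O_L = O_{L^{nr}}[X]/(E(X))$ with $E$ an Eisenstein polynomial of degree $e$, the reduction $\overline{\tau E}$ equals $X^e$, whence
$$R_\tau \simeq \mathcal O_S[\pi]/(\pi^e),$$
a free $\mathcal O_S$-module with basis $1,\pi,\dots,\pi^{e-1}$ and $\pi^e = 0$. Unwinding the definition of $\mathcal M/\pi$, the source of the map is $\mathcal E_\tau/\pi\mathcal E_\tau$ and the target $(\pi^j\mathcal E_\tau)/\pi$ is $\pi^j\mathcal E_\tau/\pi^{j+1}\mathcal E_\tau$, the map itself being induced by $x \mapsto \pi^j x$, which is visibly well defined on these quotients.

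Then I would carry out the computation in the free case. Writing every $x \dans \mathcal E_\tau \simeq R_\tau^{\,n}$ in the $\mathcal O_S$-basis as $x = \sum_{l=0}^{e-1} x_l\,\pi^l$ with $x_l \dans \mathcal O_S^{\,n}$, one sees that $\mathcal E_\tau/\pi\mathcal E_\tau$ is freely generated over $\mathcal O_S$ by the degree-$0$ component $x_0$, while $\pi^j\mathcal E_\tau/\pi^{j+1}\mathcal E_\tau$ is freely generated by the component in degree $\pi^j$; here the hypothesis $0 \leq j \leq e-1$ is exactly what guarantees that $\pi^j$ is nonzero in $R_\tau$ and that this target piece is free of rank $n$. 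Multiplication by $\pi^j$ sends $x_0$ to $x_0\,\pi^j$, so it identifies these two free $\mathcal O_S$-modules of rank $n$: surjectivity is clear, and injectivity amounts to the observation that $\pi^j x \dans \pi^{j+1}\mathcal E_\tau$ forces $x_0 = 0$, i.e. $x \dans \pi\mathcal E_\tau$. These local isomorphisms glue over $S$ to the asserted isomorphism, whose inverse is denoted $\frac{1}{\pi^j}$.

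There is no serious obstacle: the statement is elementary once (BTO) is granted. The only points that need care are the identification $R_\tau \simeq \mathcal O_S[\pi]/(\pi^e)$, which rests on the Eisenstein property of $E$ together with $p = 0$ on $S$, and the remark that multiplication by $\pi^j$ respects the grading by powers of $\pi$, thereby matching the degree-$0$ piece of $\mathcal E_\tau/\pi\mathcal E_\tau$ with the degree-$j$ piece of $(\pi^j\mathcal E_\tau)/\pi$.
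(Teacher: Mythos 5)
Votre démonstration est correcte et correspond exactement à l'argument que l'article sous-entend : le papier ne donne aucune preuve et se contente de remarquer que le lemme découle « facilement » de l'hypothèse (BTO), et votre rédaction — identification $\mathcal O_S \otimes_{O_{L^{nr}},\tau} O_L \simeq \mathcal O_S[\pi]/(\pi^e)$ via le caractère Eisenstein de $E$ et $p=0$ sur $S$, puis calcul explicite sur un module libre — est bien la vérification attendue. Le seul point qu'il fallait soigner, à savoir que $j \leq e-1$ garantit que $\pi^j \neq 0$ dans $\mathcal O_S[\pi]/(\pi^e)$, est correctement identifié.
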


\begin{conv}
\phantomsection\label{conv}
Soit $S$ un schéma, et $\mathcal F$ un faisceau quasi-cohérent sur $S$. On pose alors pour la puissance extérieure nulle
\[\bigwedge^0 \mathcal F := \mathcal O_S\]
\end{conv}

\noindent Nous aurons besoin du lemme technique suivant. 

\begin{lemm}
\phantomsection\label{lem210}
Soit $S$ un schéma, $\mathcal G \subset \mathcal F$ deux $\mathcal O_S$-modules localement libre de rangs constants $n < f$, tels que $\mathcal G$ est localement facteur direct. On note $r = f -n$ le rang du quotient.
Soit $1 \leq d \leq n$, alors la flèche naturelle,
\[  q :\left(\bigwedge^{d}\mathcal G\right) \otimes \left(\bigwedge^{r} \mathcal F\right) \fleche \bigwedge^{d+r} \mathcal F\]
est surjective. De plus, son noyau est localement engendré par l'image des tenseurs
\[ (x\wedge x_1 \wedge \dots \wedge x_{d-1})\otimes(x \wedge y_1 \wedge \dots \wedge y_{r-1}), \quad x,x_i \dans \mathcal G, y_i \dans \mathcal F\]
\end{lemm}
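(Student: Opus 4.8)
\emph{Plan.} La question étant locale sur $S$ et tous les faisceaux en jeu étant localement libres, je commencerais par me ramener au cas où $\mathcal F$ est libre, de base $(e_1,\dots,e_f)$, avec $(e_1,\dots,e_n)$ une base de $\mathcal G$ — c'est possible car $\mathcal G$ est localement facteur direct. Les modules $\bigwedge^d\mathcal G$, $\bigwedge^r\mathcal F$ et $\bigwedge^{d+r}\mathcal F$ ont alors pour bases respectives les $e_A$ ($A\subset\{1,\dots,n\}$, $|A|=d$), les $e_B$ ($B\subset\{1,\dots,f\}$, $|B|=r$) et les $e_C$ ($C\subset\{1,\dots,f\}$, $|C|=d+r$), où $e_A=e_{a_1}\wedge\dots\wedge e_{a_d}$ pour $A=\{a_1<\dots<a_d\}$, et $q(e_A\otimes e_B)=\pm e_{A\cup B}$ si $A\cap B=\varnothing$, et $0$ sinon. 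La surjectivité est alors immédiate : pour $C$ de cardinal $d+r$, en posant $C_{\leq n}=C\cap\{1,\dots,n\}$ on a $|C_{\leq n}|=(d+r)-|C\cap\{n+1,\dots,f\}|\geq (d+r)-r=d$, donc je peux choisir $A\subset C_{\leq n}$ de cardinal $d$ et poser $B=C\setminus A$, ce qui donne $q(e_A\otimes e_B)=\pm e_C$.

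Pour le noyau, notons $K$ le sous-module engendré par les tenseurs de l'énoncé. L'inclusion $K\subset\ker q$ est claire puisque l'élément $x$ apparaît deux fois dans le produit extérieur image. Le c\oe ur de la preuve est l'inclusion réciproque, que je compte obtenir en montrant que $q$ induit un isomorphisme $\overline q:\bigl(\bigwedge^d\mathcal G\otimes\bigwedge^r\mathcal F\bigr)/K\xrightarrow{\ \sim\ }\bigwedge^{d+r}\mathcal F$. L'outil clé est une relation de polarisation : pour $A_0\subset\{1,\dots,n\}$ de cardinal $d-1$, $B_0$ de cardinal $r-1$ et $a,b\leq n$, l'élément $g(x)=(x\wedge e_{A_0})\otimes(x\wedge e_{B_0})$ appartient à $K$ pour tout $x\in\mathcal G$, et l'on a
\[ g(e_a+e_b)-g(e_a)-g(e_b)=(e_a\wedge e_{A_0})\otimes(e_b\wedge e_{B_0})+(e_b\wedge e_{A_0})\otimes(e_a\wedge e_{B_0})\in K. \]
En termes de bases, cette relation dit exactement que $e_A\otimes e_B$ et $e_{A'}\otimes e_{B'}$ co\"incident au signe près modulo $K$, où $A'=(A\setminus\{a\})\cup\{b\}$ et $B'=(B\setminus\{b\})\cup\{a\}$ ; l'hypothèse $b\leq n$ est ici essentielle pour que $A'$ reste un indice admissible de $\bigwedge^d\mathcal G$.

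Il reste à combiner ces échanges. Pour $C$ fixé, les transpositions $a\leftrightarrow b$ ci-dessus (avec $a\in A$ et $b\in C_{\leq n}\setminus A\subset B$, tous deux $\leq n$) permettent de passer de n'importe quel $d$-sous-ensemble $A$ de $C_{\leq n}$ à tout autre ; tous les tenseurs $e_A\otimes e_B$ avec $A\sqcup B=C$ sont donc égaux au signe près modulo $K$. Par conséquent $\bigl(\bigwedge^d\mathcal G\otimes\bigwedge^r\mathcal F\bigr)/K$ est engendré par un seul représentant pour chaque $C$, soit au plus $\binom{f}{d+r}$ éléments, c'est-à-dire exactement le rang du module libre $\bigwedge^{d+r}\mathcal F$. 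Comme $\overline q$ est surjective, je conclurais par le fait classique qu'une surjection d'un module engendré par $N$ éléments sur un module libre de rang $N$ est un isomorphisme (un endomorphisme surjectif d'un module de type fini étant injectif). On obtient ainsi $\ker q=K$.

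\emph{Principal obstacle.} La partie délicate est l'inclusion $\ker q\subset K$ : tout repose sur le fait que les seules relations disponibles (les échanges via $b\leq n$) suffisent à identifier, modulo $K$, toutes les décompositions d'un même $C$, puis sur l'argument de comptage qui transforme l'égalité des rangs en isomorphisme. Les signes dans la relation de polarisation demandent un peu de soin mais ne posent pas de difficulté de fond.
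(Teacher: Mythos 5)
Votre preuve est correcte et repose sur la même identité de polarisation que celle du texte : pour $a,b\leq n$, l'élément $(e_a\wedge e_{A_0})\otimes(e_b\wedge e_{B_0})+(e_b\wedge e_{A_0})\otimes(e_a\wedge e_{B_0})$ appartient au sous-module $K$ engendré par les tenseurs de l'énoncé. La différence se situe dans la manière de conclure. Le texte part d'un élément arbitraire du noyau, le décompose selon le support des multi-indices, se ramène à un support fixé $S$, identifie le noyau sur cette composante comme engendré par les $e_{\id}-\varepsilon(\sigma)e_\sigma$ pour $\sigma$ permutation de $S_N$, puis traite chaque transposition par polarisation ; c'est un calcul direct de type générateurs et relations. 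Vous contournez cette comptabilité en bornant le nombre de générateurs du quotient $\bigl(\bigwedge^d\mathcal G\otimes\bigwedge^r\mathcal F\bigr)/K$ par $\binom{f}{d+r}$ grâce aux échanges, puis en invoquant le fait qu'une surjection d'un module engendré par $N$ éléments sur un module libre de rang $N$ est un isomorphisme (conséquence du lemme sur les endomorphismes surjectifs des modules de type fini). Votre rédaction est un peu plus économe et tout aussi rigoureuse ; le seul point à expliciter est que les tenseurs de base $e_A\otimes e_B$ avec $A\cap B\neq\varnothing$ appartiennent eux-mêmes à $K$ (ils sont exactement de la forme annoncée, avec $x=e_a$ pour $a\in A\cap B\subset\{1,\dots,n\}$), de sorte que le quotient est bien engendré par un seul représentant pour chaque ensemble $C$ de cardinal $d+r$, ce qui est nécessaire à votre argument de comptage.
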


\begin{proof}
Quitte à raisonner localement, supposons $S = \Spec R$, et $\mathcal F,\mathcal G, \mathcal F/\mathcal G$ libres, donnés respectivement par $M,N,M/N$ des $R$-modules libres. 
Soit $(e_1,\dots,e_n)$ une base de $N$ telle que $(e_1,\dots,e_{n+r})$ soit une base de $M$. Alors les éléments
$(e_{j_1}\wedge\dots\wedge e_{j_d}) \otimes (e_{l_1} \wedge \dots \wedge e_{l_r})$ où $j_1 < \dots < j_d \leq n$ et $l_1 < \dots < l_r$ induisent une base de $\bigwedge^dN\otimes\bigwedge^r M$. Une base de $\bigwedge^{d+r} M$ est constituée des éléments $e_{k_1} \wedge \dots \wedge e_{k_{d+r}}$, avec $1 \leq k_1 < \dots < k_{d+r} \leq n+r$. Mais pour un tel élément, on a $k_d \leq n$, et il est donc égal \`a $q((e_{k_1}\wedge\dots\wedge e_{k_d}) \otimes (e_{k_{d+1}} \wedge \dots \wedge e_{k_{d+r}}))$. 
Cela prouve que le morphisme $q$ est surjectif.

%$(e_{\sigma(1)}\wedge\dots e_{\sigma(d)})\otimes(e_{\sigma(d+1)}\wedge\dots\wedge e_{\sigma(d+r)})$ forment une base de $\bigwedge^dN\otimes\bigwedge^rM$, où $\sigma \dans \mathfrak S_{s}$ tel que $\sigma(i) \leq n$ pour tout $i \leq d$, et 
%$\sigma(1) < \dots < \sigma(d)$, et $\sigma(d+1) < \dots \sigma(d+r)$. 
Soit 
\[ x = \sum_{\underline j \underline l} x_{\underline j,\underline l} (e_{j_1} \wedge \dots \wedge e_{j_d}) \otimes(e_{l_1}\wedge\dots\wedge e_{l_r})\]
décomposé dans la base précédente, tel que $x$ est dans le noyau de $q$. On peut supposer que le support des 
$\underline j,\underline l$ (c'est-\`a-dire l'ensemble $\{j_i;l_k : i \leq d, k \leq r\} \subset \{1,\dots,n+r\}$) qui apparaissent dans la somme est constant. En effet, on peut décomposer l'élément $x$ en $x = x_{J_1} +\dots + x_{J_s}$, o\`u $J_1, \dots, J_s$ sont des ensembles de $\{1,\dots,n+r\}$, et o\`u $x_{J_t}$ est une somme d'éléments de la base de $\bigwedge^d N \otimes_R \bigwedge^r M$ ayant leur support dans $J_t$ pour $1 \leq t \leq s$. Soit $t$ entre $1$ et $s$ ; si le cardinal de $J_t$ est strictement inférieur \`a $d+r$, alors $q(x_{J_t}) = 0$. Sinon, $J_t = \{ k_1 < \dots < k_{d+r} \}$, et définissons $y_t = e_{k_1} \wedge \dots \wedge e_{k_{d+r}} \in \bigwedge^{d+r} M$. Alors $q(x_{J_t})$ est dans le module engendré par $y_t$ ; comme les éléments $(y_u)_u$ ainsi définis forment une famille libre de $\bigwedge^{d+r} M$, on en déduit que $q(x_{J_t}) = 0$, pour tout $t$. \\
On suppose donc que le support des éléments $\underline j,\underline l$ apparaissant dans la somme définissant $x$ est constant. De plus, s'il existe des entiers $i$ et $k$ avec 
$j_i = l_k$, alors $(e_{j_1} \wedge \dots \wedge e_{j_d}) \otimes(e_{l_1}\wedge\dots\wedge e_{l_r})$ est dans le noyau de $q$, et est de la forme prédite par le lemme. Supposons donc que les $\underline j,\underline l$ ont tous le même support $S = \{k_1 < k_2 < \dots < k_{d+r}\}$, et on note $S_N := S \cap \{1,\dots n\} = \{k_1 < \dots < k_m\} $. Dans ce cas, on peut écrire $x$ sous la forme,
\[ x = \sum_{\sigma} x_{\sigma} (e_{\sigma(k_1)}\wedge\dots\wedge e_{\sigma(k_d)})\otimes (e_{\sigma(k_{d+1})} \wedge\dots\wedge e_{\sigma(k_{d+r})})\]
o\`u $\sigma$ parcourt les permutations de $S$ envoyant $\{1, \dots, d\}$ dans $S_N$. Quitte à regrouper les termes, on peut supposer que les $\sigma$ dans la somme soient l'identité sur $S \backslash S_N$, auquel cas la somme porte sur les permutations de $S_N$. Alors $x$ est dans le noyau de $q$ si et seulement si $\sum_\sigma \varepsilon(\sigma) x_\sigma= 0$.  
Notons
$$e_\sigma := (e_{\sigma(k_1)}\wedge\dots\wedge e_{\sigma(k_d)}) \otimes (e_{\sigma(k_{d+1})} \wedge\dots\wedge e_{\sigma(k_{d+r})})$$
o\`u $\sigma$ est une permutation de $S_N$ (prolongée en une permutation de $S$). Alors $\Ker q$ est engendré par les éléments $e_{\id} - \varepsilon(\sigma)e_\sigma$. Puisque l'ensemble des permutations est engendré par les transpositions, le noyau de $q$ est engendré par $e_\sigma + e_{\tau\sigma}$, o\`u $\sigma$ est une permutation de $S_N$, et $\tau$ une permutation de $S_N$. L'élément $\tau$ permute $\sigma(k_i)$ et $\sigma(k_j)$ pour des entiers $i < j$. Si $j \leq d$ ou $i > d$, l'élément $e_\tau + e_{\tau\sigma}$ est nul. On suppose donc $i \leq d < j$. Supposons pour simplifier les notations que $\sigma = \id$, $i=1$ et $j=d+1$ (on se ram\`ene facilement \`a ce cas) ; alors on peut écrire, 
\begin{eqnarray*}
e_\sigma + e_{\tau\sigma} & = (e_{k_1} \wedge \dots \wedge e_{k_d})\otimes(e_{k_{d+1}} \wedge \dots \wedge e_{k_{d+r}}) + (e_{k_{d+1}} \wedge e_{k_2} \wedge \dots \wedge e_{k_d})\otimes(e_{k_1} \wedge e_{k_{d+2}} \wedge \dots \wedge e_{k_{d+r}}) \\
& = ((e_{k_1} + e_{k_{d+1}}) \wedge e_{k_2} \wedge \dots \wedge e_{k_d})\otimes((e_{k_1} + e_{k_{d+1}}) \wedge e_{k_{d+2}} \wedge \dots \wedge e_{k_{d+r}}) - \\
& (e_{k_1} \wedge e_{k_2} \wedge \dots \wedge e_{k_d})\otimes(e_{k_1} \wedge e_{k_{d+2}} \wedge \dots \wedge e_{k_{d+r}}) - \\
& (e_{k_{d+1}} \wedge e_{k_2} \wedge \dots \wedge e_{k_d})\otimes(e_{k_{d+1}} \wedge e_{k_{d+2}} \wedge \dots \wedge e_{k_{d+r}})
\end{eqnarray*}
et ces 3 derniers termes sont de la forme annoncée.
\end{proof}

Ce lemme permet de prouver facilement la proposition suivante. Les produits extérieurs de faisceaux seront pris sur $\mathcal{O}_S$ (notons que nous ne définissons les produits extérieurs que pour des faisceaux avec une action de $O_L$ tués par $\pi$).

\begin{prop} 
\phantomsection\label{lem211}
Soit $\tau \in \T, 1 \leq i \leq e$ et $d \geq d_{\tau,i}$ un entier. Alors la flèche naturelle
\[ \left(\bigwedge^{d_{\tau,i}} \Fil^{[i]} \mathcal E_\tau/\pi\right) \otimes \left(\bigwedge^{d-d_{\tau,i}} \Fil^{[i-1]} \mathcal E_\tau/\pi\right) \fleche \bigwedge^d \Fil^{[i]}\mathcal E_\tau/\pi\]
est surjective. De plus, il existe une application $\bigwedge^{d_{\tau,i}}\pi : \bigwedge^d \Fil^{[i]}\mathcal E_\tau/\pi \to \bigwedge^d \Fil^{[i-1]}\mathcal E_\tau/\pi$, telle que le diagramme suivant commute

\begin{center}
\begin{tikzpicture}[description/.style={fill=white,inner sep=2pt}] 
\matrix (m) [matrix of math nodes, row sep=3em, column sep=2.5em, text height=1.5ex, text depth=0.25ex] at (0,0)
{ 
\left(\bigwedge^{d_{\tau,i}} \Fil^{[i]} \mathcal E_\tau/\pi\right) \otimes \left(\bigwedge^{d-d_{\tau,i}} \Fil^{[i-1]} \mathcal E_\tau/\pi\right) & & \bigwedge^d \Fil^{[i]}\mathcal E_\tau/\pi \\
\bigwedge^{d} \Fil^{[i-1]} \mathcal E_\tau/\pi& & \\
 };

%\draw[double,double distance=5pt] (m-1-1)  (m-1-3);
\path[->,font=\scriptsize] 
(m-1-1) edge node[auto] {$$} (m-1-3)
(m-1-1) edge node[auto,left] {$\bigwedge^{d_{\tau,i}}\pi \otimes \bigwedge^{d-d_{\tau,i}}\id$} (m-2-1);
\path[dashed,->,font=\scriptsize] 
(m-1-3) edge node[auto] {$\bigwedge^{d_{\tau,i}} \pi $} (m-2-1)
;
\end{tikzpicture}
\end{center}
\end{prop}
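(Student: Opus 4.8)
The statement packages two assertions: surjectivity of the displayed arrow, call it $q$, and existence of the dashed arrow $\bigwedge^{d_{\tau,i}}\pi$ making the square commute. I would treat them in this order. Throughout write $\mathcal F=\Fil^{[i]}\mathcal E_\tau/\pi$ and $\mathcal G=\Fil^{[i-1]}\mathcal E_\tau/\pi$, let $\iota\colon\mathcal G\fleche\mathcal F$ be the arrow induced by the inclusion $\Fil^{[i-1]}\mathcal E_\tau\subset\Fil^{[i]}\mathcal E_\tau$, and let $\pi_*\colon\mathcal F\fleche\mathcal G$ be the arrow induced by $\pi$ introduced just before Convention \ref{conv}. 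The whole argument hinges on one elementary remark, which I record first: since $\mathcal G$ is killed by $\pi$, the composite $\pi_*\circ\iota$ is multiplication by $\pi$ on $\mathcal G$, hence is zero. Equivalently, $\pi_*$ factors through $\mathcal F/\im\iota=\Gr^{[i]}\omega_{G,\tau}$, which is locally free of rank $d_{\tau,i}$.

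For surjectivity I would feed this into Lemma \ref{lem210}. The image $\iota(\mathcal G)\subset\mathcal F$ has quotient $\Gr^{[i]}\omega_{G,\tau}$, locally free of rank $d_{\tau,i}$, so the lemma applied with submodule $\iota(\mathcal G)$, its integer $d$ replaced by $d-d_{\tau,i}$, and corank $r=d_{\tau,i}$, shows that $(\bigwedge^{d-d_{\tau,i}}\iota(\mathcal G))\otimes(\bigwedge^{d_{\tau,i}}\mathcal F)\fleche\bigwedge^d\mathcal F$ is surjective and identifies its kernel. Composing with the surjection $\bigwedge^{d-d_{\tau,i}}\mathcal G\twoheadrightarrow\bigwedge^{d-d_{\tau,i}}\iota(\mathcal G)$ and exchanging the two tensor factors recovers exactly $q$, which is therefore surjective; the boundary case $d=d_{\tau,i}$, where the second factor is $\bigwedge^0=\mathcal O_S$ by Convention \ref{conv}, is immediate.

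I would then define $\bigwedge^{d_{\tau,i}}\pi$ by the only recipe the square allows: on a local section of the form $q(\alpha\otimes\beta)$, with $\alpha$ in $\bigwedge^{d_{\tau,i}}\mathcal F$ and $\beta$ in $\bigwedge^{d-d_{\tau,i}}\mathcal G$, declare its value to be $(\bigwedge^{d_{\tau,i}}\pi_*)(\alpha)\wedge\beta$ in $\bigwedge^d\mathcal G$. Surjectivity of $q$ shows this determines the arrow, and the content is that the map $\Phi\colon\alpha\otimes\beta\mapsto(\bigwedge^{d_{\tau,i}}\pi_*)(\alpha)\wedge\beta$ kills $\Ker q$. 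Here the kernel description of Lemma \ref{lem210} enters: $\Ker q$ is locally generated by the elements $(x\wedge y_1\wedge\dots\wedge y_{d_{\tau,i}-1})\otimes(x\wedge x_1\wedge\dots\wedge x_{d-d_{\tau,i}-1})$ in which one section $x\dans\mathcal G$ occurs in both factors (with $x_j\dans\mathcal G$ and $y_j\dans\mathcal F$). Evaluating $\Phi$ on such a generator, the first factor becomes $\pi_*(\iota(x))\wedge\pi_*(y_1)\wedge\dots\wedge\pi_*(y_{d_{\tau,i}-1})$, and this vanishes because $\pi_*(\iota(x))=0$ by the remark of the first paragraph. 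Thus $\Phi$ descends through $q$, producing $\bigwedge^{d_{\tau,i}}\pi$ and the commutativity of the square.

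The delicate point — and the one I expect to be the real obstacle — is that $\mathcal F$ and $\mathcal G$ are in general not locally free over $\mathcal O_S$: only the graded pieces $\Gr^{[i]}\omega_{G,\tau}$ and the integral sheaves $\omega_{G,\tau}^{[i]}$ are, whereas multiplication by $\pi$ may fail to have locally free image, so $\mathcal F=\omega_{G,\tau}^{[i]}/\pi$ can carry torsion. Consequently Lemma \ref{lem210} does not apply verbatim to the pair $\iota(\mathcal G)\subset\mathcal F$, and both invocations above must be justified. For surjectivity this is harmless: it may be checked after $-\otimes_{\mathcal O_S}k(s)$ at each point $s\dans S$ by Nakayama, and over the field $k(s)$ all the sheaves become vector spaces, so the lemma applies to the corank-$d_{\tau,i}$ subspace $\iota(\mathcal G)\otimes k(s)$. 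The kernel statement is the genuinely technical part; I would establish it in the present $\pi$-torsion situation by a local computation modelled on the proof of Lemma \ref{lem210}, using the local splitting $\mathcal F\simeq\iota(\mathcal G)\oplus\Gr^{[i]}\omega_{G,\tau}$ afforded by the local freeness of the graded piece to control the relations. Once the kernel is so generated, the integral identity $\pi_*\circ\iota=0$, which holds on the nose and needs no freeness, forces the factorization and finishes the proof.
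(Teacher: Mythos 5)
Your central mechanism is the correct one, and it is the same as the paper's: the descended map exists because multiplication by $\pi$ annihilates the image of $\Fil^{[i-1]}\mathcal E_\tau$ inside $\Fil^{[i]}\mathcal E_\tau/\pi$ (your identity $\pi_*\circ\iota=0$), so the kernel generators with a repeated factor in $\Fil^{[i-1]}$ are killed. Where you diverge from the paper is in running Lemma \ref{lem210} at the mod-$\pi$ level, and this is exactly where your argument acquires its one soft spot. The paper avoids the difficulty you flag by applying Lemma \ref{lem210} to the \emph{integral} inclusion $\Fil^{[i-1]}\mathcal E_\tau\subset\Fil^{[i]}\mathcal E_\tau$: these are $\omega_{G,\tau}^{[i-1]}\subset\omega_{G,\tau}^{[i]}$, which are locally free and locally split by the Pappas--Rapoport condition, so the lemma applies verbatim and gives a surjection $q_0$ whose kernel $K$ is generated by the tensors with a repeated section $x\dans\Fil^{[i-1]}\mathcal E_\tau$. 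Since $-\otimes_{O_L}O_L/\pi$ is right exact, the reduction $q$ of $q_0$ is again surjective and $\Ker q$ is the image of $K$; it then suffices to observe that $\bigwedge^{d_{\tau,i}}\pi\otimes\bigwedge^{d-d_{\tau,i}}\id$ sends the integral generators into $\pi\cdot\bigwedge^{d}\Fil^{[i-1]}\mathcal E_\tau$, hence to zero in the target. This one-line descent makes your ``genuinely technical part'' disappear.

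By contrast, the repair you propose for that part --- redoing the local computation of Lemma \ref{lem210} for $\iota(\mathcal G)\subset\mathcal F$ using the local splitting off of $\Gr^{[i]}\omega_{G,\tau}$ --- is not obviously workable as stated: the proof of that lemma starts by choosing a basis of the submodule and extending it, and $\iota(\mathcal G)=\Fil^{[i-1]}\mathcal E_\tau/\pi\Fil^{[i]}\mathcal E_\tau$ has no reason to be locally free, so the kernel of the wedge map over it is not controlled by that computation. Your surjectivity argument (fibrewise check plus Nakayama, using that exterior powers commute with base change) is fine, if more roundabout than reducing the integral surjection modulo $\pi$. So: right idea, right vanishing, but the kernel description of $q$ is asserted rather than proved, and the natural way to prove it is the paper's --- establish everything integrally first, then descend.
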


\begin{rema}
Si $d_{\tau,i} = 0$, l'application $\bigwedge^{d_{\tau,i}}\pi$ précédente est l'identité ; en effet $\Fil^{[i]}\mathcal E_\tau = \Fil^{[i-1]}\mathcal E_\tau$. Si $d=d_{\tau,i}$, c'est simplement la puissance extérieur de l'application $\pi$.
\end{rema}

Lorsque $G$ est la $p$-torsion d'un groupe $p$-divisible sur un corps parfait, alors l'application $\bigwedge^{d_{\tau,i}}\pi$ précédente coïncide avec $\pi^{\min(d,d_{\tau,i})}$, donnée par le corollaire \ref{corpi}. On espère que l'abus de notation $\bigwedge^{d_{\tau,i}}\pi$ pour l'application induite ne causera pas de confusion.

\begin{proof} 
D'après le corollaire précédent, le morphisme 
$$\left(\bigwedge^{d_{\tau,i}} \Fil^{[i]} \mathcal E_\tau\right) \otimes \left(\bigwedge^{d-d_{\tau,i}} \Fil^{[i-1]} \mathcal E_\tau\right) \fleche \bigwedge^d \Fil^{[i]}\mathcal E_\tau$$
est surjectif, et on notera $K$ son noyau. On a alors la surjectivité en réduisant modulo $\pi$. Pour montrer que l'application de l'énoncé $\bigwedge^{d_{\tau,i}}\pi \otimes \bigwedge^{d-d_{\tau,i}}\id$ passe au quotient, il suffit de montrer que le noyau de l'application
\[ \bigwedge^{d_{\tau,i}}\pi \otimes \bigwedge^{d-d_{\tau,i}}\id : \left(\bigwedge^{d_{\tau,i}} \Fil^{[i]} \mathcal E_\tau\right) \otimes \left(\bigwedge^{d-d_{\tau,i}} \Fil^{[i-1]} \mathcal E_\tau\right) \fleche \bigwedge^d \Fil^{[i]}\mathcal E_\tau /\pi\] 
contient $K$.
Quitte à raisonner localement, soit $(x \wedge y_1 \wedge \dots \wedge y_{d_i - 1}) \otimes (x \wedge z_1 \wedge \dots \wedge z_{d-d_i-1})$ dans $K$, où 
$x, z_i \dans \Fil^{[i-1]}\mathcal E_\tau$ et $y_i \dans \Fil^{[i]}\mathcal E_\tau$.
Son image par l'application précédente est dans $\pi \cdot \bigwedge^d \Fil^{[i]}\mathcal E_\tau /\pi = 0$.
\end{proof}

Pour tout $\tau$, le cristal de $G$ est muni des applications suivantes,
\[ V_\tau : \mathcal E_{\tau} \fleche \mathcal E_{\sigma^{-1} \tau}^{(p)} \quad \text{et} \quad F_\tau : \mathcal E_{\sigma^{-1} \tau}^{(p)} \fleche \mathcal E_{\tau}\]
et comme $G$ est un $\mathcal{BT}_1$, on a $\Ker V_\tau = \im F_\tau$. On peut tirer en arrière les filtrations sur les faisceaux $(\mathcal E_\tau)_\tau$ :
on définit pour $\tau \in \T$ et $0 \leq i \leq e$ 
\[ \mathcal F_\tau^{[i]} := V_\tau^{-1}((\Fil^{[i]}\mathcal E_{\sigma^{-1} \tau})^{(p)})\]
et on en déduit une autre filtration,
\[ \Ker V_\tau = \mathcal F_\tau^{[0]} \subset \mathcal F_\tau^{[1]} \subset \dots \subset \mathcal F_\tau^{[e]} = \mathcal E_{\tau}\]
la relation $\mathcal F_\tau^{[e]} = \mathcal E_{\tau}$ provenant du fait que $(\Fil^{[e]}\mathcal E_{\sigma^{-1} \tau})^{(p)} = \im V_\tau$ (voir \cite{EV} partie $3.1$). 

\begin{prop}
Soit $\tau \in \mathcal{T}$ ; chaque sous-faisceau $\mathcal F_\tau^{[i]}$ de $\mathcal E_\tau$ est localement facteur direct pour $0 \leq i \leq e$. De plus, le gradué 
\[ \mathcal F_\tau^{[i]}/\mathcal F_\tau^{[i-1]},\]
est localement libre de rang $d_{\sigma^{-1}\tau,i}$, et l'application $\pi : \mathcal E_\tau \fleche \mathcal E_\tau$ envoie $\mathcal F_\tau^{[i]}$ dans $\mathcal F_\tau^{[i-1]}$ pour $1 \leq i \leq e$.
\end{prop}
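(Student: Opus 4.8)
Le plan est de tout lire à travers le morphisme $V_\tau : \mathcal E_\tau \fleche \mathcal E_{\sigma^{-1}\tau}^{(p)}$, en exploitant que $G$ est un $\mathcal{BT}_1$ et que $\im V_\tau = (\Fil^{[e]}\mathcal E_{\sigma^{-1}\tau})^{(p)}$. Je rappellerais d'abord trois faits : $V_\tau$ est $O_L$-linéaire, car le Verschiebung commute à $\iota(\pi)$ ; comme $G$ est un $\mathcal{BT}_1$, on a $\mathcal F_\tau^{[0]} = \Ker V_\tau = \im F_\tau$ ; enfin $\im V_\tau = (\Fil^{[e]}\mathcal E_{\sigma^{-1}\tau})^{(p)} = (\omega_{G,\sigma^{-1}\tau})^{(p)}$ est localement libre, comme image inverse par Frobenius du faisceau localement libre $\omega_{G,\sigma^{-1}\tau}$. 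Ainsi $V_\tau$ identifie $\mathcal E_\tau/\mathcal F_\tau^{[0]}$ à $\im V_\tau$ ; donc $\mathcal F_\tau^{[0]}$, noyau d'une surjection sur un faisceau localement libre, est localement facteur direct de $\mathcal E_\tau$, ce qui traite le cas $i=0$.

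Pour $i$ quelconque, on a $(\Fil^{[i]}\mathcal E_{\sigma^{-1}\tau})^{(p)} \subset \im V_\tau$, de sorte que $V_\tau$ envoie $\mathcal F_\tau^{[i]} = V_\tau^{-1}((\Fil^{[i]}\mathcal E_{\sigma^{-1}\tau})^{(p)})$ surjectivement sur $(\Fil^{[i]}\mathcal E_{\sigma^{-1}\tau})^{(p)}$, de noyau $\mathcal F_\tau^{[0]}$. J'en déduirais les isomorphismes
$$\mathcal F_\tau^{[i]}/\mathcal F_\tau^{[0]} \simeq (\Fil^{[i]}\mathcal E_{\sigma^{-1}\tau})^{(p)} \quad \text{et} \quad \mathcal E_\tau/\mathcal F_\tau^{[i]} \simeq \left(\omega_{G,\sigma^{-1}\tau}/\Fil^{[i]}\mathcal E_{\sigma^{-1}\tau}\right)^{(p)}.$$
Puisque la filtration de Pappas-Rapoport est formée de sous-faisceaux localement facteurs directs à gradués localement libres, $\omega_{G,\sigma^{-1}\tau}/\Fil^{[i]}\mathcal E_{\sigma^{-1}\tau}$ est localement libre, propriété préservée par image inverse par Frobenius ; donc $\mathcal E_\tau/\mathcal F_\tau^{[i]}$ est localement libre et $\mathcal F_\tau^{[i]}$ localement facteur direct (point 1). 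En passant au quotient par $\mathcal F_\tau^{[i-1]}$ dans le premier isomorphisme, j'obtiendrais $\mathcal F_\tau^{[i]}/\mathcal F_\tau^{[i-1]} \simeq \left(\Gr^{[i]}\omega_{G,\sigma^{-1}\tau}\right)^{(p)}$, localement libre de rang $d_{\sigma^{-1}\tau,i}$ car le twist préserve le rang (point 2) ; j'utilise ici que les suites exactes courtes de faisceaux localement facteurs directs, étant localement scindées, restent exactes après image inverse par Frobenius.

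Pour la dernière assertion, la $O_L$-linéarité de $V_\tau$ donne $V_\tau(\pi x) = \pi\, V_\tau(x)$ pour une section locale $x$ de $\mathcal F_\tau^{[i]}$, le $\pi$ de droite agissant sur $\mathcal E_{\sigma^{-1}\tau}^{(p)}$. Comme $V_\tau(x) \dans (\Fil^{[i]}\mathcal E_{\sigma^{-1}\tau})^{(p)}$ et que la relation PR $\pi\,\Fil^{[i]}\mathcal E_{\sigma^{-1}\tau} \subset \Fil^{[i-1]}\mathcal E_{\sigma^{-1}\tau}$ se transporte par le twist en $\pi\,(\Fil^{[i]}\mathcal E_{\sigma^{-1}\tau})^{(p)} \subset (\Fil^{[i-1]}\mathcal E_{\sigma^{-1}\tau})^{(p)}$, on obtient $V_\tau(\pi x) \dans (\Fil^{[i-1]}\mathcal E_{\sigma^{-1}\tau})^{(p)}$, c'est-à-dire $\pi x \dans \mathcal F_\tau^{[i-1]}$. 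La principale difficulté est purement comptable : vérifier que l'image inverse par Frobenius préserve liberté locale, rangs et scindage des suites exactes ; les seuls ingrédients non formels, à savoir $\Ker V_\tau = \im F_\tau$ et $\im V_\tau = (\Fil^{[e]}\mathcal E_{\sigma^{-1}\tau})^{(p)}$ (tiré de \cite{EV}), sont déjà acquis.
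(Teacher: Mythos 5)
Votre preuve est correcte et suit essentiellement la même stratégie que celle de l'article : tout se lit à travers $V_\tau$, en utilisant $\Ker V_\tau = \im F_\tau$ et $\im V_\tau = (\Fil^{[e]}\mathcal E_{\sigma^{-1}\tau})^{(p)}$ pour identifier les quotients aux twists de Frobenius des crans de la filtration de Pappas-Rapoport, puis la $O_L$-linéarité de $V_\tau$ pour la compatibilité à $\pi$. Votre rédaction est même un peu plus explicite que celle de l'article sur le point où l'on déduit que $\mathcal F_\tau^{[i]}$ est localement facteur direct (via la liberté locale de $\mathcal E_\tau/\mathcal F_\tau^{[i]} \simeq (\omega_{G,\sigma^{-1}\tau}/\Fil^{[i]}\mathcal E_{\sigma^{-1}\tau})^{(p)}$).
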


\begin{proof}
On a $\mathcal F_\tau^{[0]} = \Ker V_\tau = \im F_\tau$, et on a donc des isomorphismes $\mathcal F_\tau^{[0]} \simeq (\mathcal E_\tau / \Fil^{[e]}\mathcal E_{\sigma^{-1} \tau})^{(p)}$ et $\mathcal{E}_\tau / \mathcal F_\tau^{[0]} \simeq (\Fil^{[e]}\mathcal E_{\sigma^{-1} \tau})^{(p)}$. Cela prouve que le faisceau $\mathcal F_\tau^{[0]}$ est localement un facteur direct. \\
Soit maintenant $i$ un entier compris entre $1$ et $e$. Alors, le morphisme $V_\tau$ induit un isomorphisme 
\[ V_\tau :\mathcal F_\tau^{[i]}/\mathcal F_\tau^{[i-1]} \overset{\sim}{\fleche} (\Fil^{[i]}\mathcal E_{\sigma^{-1}\tau} / \Fil^{[i-1]}\mathcal E_{\sigma^{-1}\tau})^{(p)}\]
ce qui prouve que le sous-faisceau $\mathcal F_\tau^{[i]}$ est localement un facteur direct, et donne le rang du quotient $\mathcal F_\tau^{[i]}/\mathcal F_\tau^{[i-1]}$. Enfin, $\pi$ commute à $V_\tau$, d'où $\pi \cdot (\mathcal F_\tau^{[i]}) \subset \mathcal F_\tau^{[i-1]}$. 
\end{proof}

Comme les faisceaux sont de caractéristique $p$, on ne peut plus faire la division par $p$ que l'on effectuait pour construire l'application 
$\zeta_\tau^d$ de la section précédente, et nous devons donc contourner ce problème.

\begin{prop} \phantomsection\label{appdiv}
Soit $\tau \in \T$ et $1 \leq i \leq e$ un entier. Pour $d > d_{\tau,i}$, il existe une application $\bigwedge^{d_{\tau,i}}\pi : \bigwedge^d \mathcal F^{[i]}_{\sigma \tau}/\pi \to \bigwedge^d \mathcal F^{[i-1]}_{\sigma \tau}/\pi$, telle que le diagramme suivant commute
\begin{center}
\begin{tikzpicture}[description/.style={fill=white,inner sep=2pt}] 
\matrix (m) [matrix of math nodes, row sep=3em, column sep=2.5em, text height=1.5ex, text depth=0.25ex] at (0,0)
{ 
\left(\bigwedge^{d_{\tau,i}} \mathcal F_{\sigma \tau}^{[i]}/\pi\right) \otimes \left(\bigwedge^{d-d_{\tau,i}} \mathcal F_{\sigma \tau}^{[i-1]}/\pi\right) & & \bigwedge^d \mathcal F^{[i]}_{\sigma \tau}/\pi \\
\bigwedge^{d} \mathcal F^{[i-1]}_{\sigma \tau}/\pi& & \\
 };

%\draw[double,double distance=5pt] (m-1-1)  (m-1-3);
\path[->,font=\scriptsize] 
(m-1-1) edge node[auto] {$$} (m-1-3)
(m-1-1) edge node[auto,left] {$\bigwedge^{d_{\tau,i}}\pi \otimes \bigwedge^{d-d_{\tau,i}}\id$} (m-2-1);
\path[dashed,->,font=\scriptsize] 
(m-1-3) edge node[auto] {$\bigwedge^{d_{\tau,i}} \pi$} (m-2-1)
;
\end{tikzpicture}
\end{center}
Lorsque $d_{\tau,i} = 0$, cette application est simplement l'identité.
Pour $d \leq d_{\tau,i}$, l'application $\bigwedge^d \pi$ induit un morphisme
\[ \bigwedge^{d} \mathcal F_{\sigma \tau}^{[i]}/\pi \fleche \bigwedge^d \mathcal F_{\sigma \tau}^{[i-1]}/\pi.\]
\end{prop}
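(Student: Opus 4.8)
La stratégie est de transposer mot pour mot la preuve de la proposition \ref{lem211}, en y remplaçant partout la filtration de Hodge $\Fil^{[\bullet]}\mathcal{E}_\tau$ par la filtration $\mathcal{F}_{\sigma\tau}^{[\bullet]}$. En effet, la proposition précédente, appliquée à $\sigma\tau$, fournit exactement les trois propriétés structurelles dont dépend cette preuve : $\mathcal{F}_{\sigma\tau}^{[i-1]}$ est localement facteur direct de $\mathcal{F}_{\sigma\tau}^{[i]}$, le gradué $\mathcal{F}_{\sigma\tau}^{[i]}/\mathcal{F}_{\sigma\tau}^{[i-1]}$ est localement libre de rang $d_{\sigma^{-1}(\sigma\tau),i}=d_{\tau,i}$, et la multiplication par $\pi$ envoie $\mathcal{F}_{\sigma\tau}^{[i]}$ dans $\mathcal{F}_{\sigma\tau}^{[i-1]}$. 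Aucune autre information sur la filtration n'intervenant dans la proposition \ref{lem211}, je m'attends à ce que la démonstration s'applique à l'identique.

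Concrètement, pour $d > d_{\tau,i}$, j'appliquerais le lemme \ref{lem210} au couple $\mathcal{G}=\mathcal{F}_{\sigma\tau}^{[i-1]}\subset \mathcal{F}=\mathcal{F}_{\sigma\tau}^{[i]}$, avec l'exposant $d-d_{\tau,i}$ sur $\mathcal{G}$ et $r=d_{\tau,i}$ sur $\mathcal{F}$ (le rang du quotient). Le lemme donne alors la surjectivité de la flèche naturelle $q$, ainsi qu'une famille génératrice locale de son noyau formée des tenseurs $(x\wedge y_1\wedge\dots)\otimes(x\wedge z_1\wedge\dots)$ dans lesquels un même élément $x\in \mathcal{F}_{\sigma\tau}^{[i-1]}$ est répété dans les deux facteurs. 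Après réduction modulo $\pi$, $q$ reste surjective ; pour faire descendre $\bigwedge^{d_{\tau,i}}\pi \otimes \bigwedge^{d-d_{\tau,i}}\id$ et définir ainsi la flèche pointillée $\bigwedge^{d_{\tau,i}}\pi$, il suffit de vérifier que l'image d'un tel générateur s'annule dans $\bigwedge^d \mathcal{F}_{\sigma\tau}^{[i-1]}/\pi$. Or l'application $\bigwedge^{d_{\tau,i}}\pi$ agit sur le facteur contenant $x$ en remplaçant $x$ par $\pi x$ ; comme $x$ appartient déjà à $\mathcal{F}_{\sigma\tau}^{[i-1]}$, on a $\pi x\in \pi\cdot\mathcal{F}_{\sigma\tau}^{[i-1]}$, c'est-à-dire $\overline{\pi x}=0$ dans $\mathcal{F}_{\sigma\tau}^{[i-1]}/\pi$, ce qui annule le produit extérieur tout entier. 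C'est le seul point véritablement substantiel, et il repose entièrement sur la description précise du noyau (l'élément répété vivant dans le \emph{petit} module) donnée par le lemme \ref{lem210} : appliquer $\pi$ à un facteur quelconque ne suffirait pas.

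Le cas $d \leq d_{\tau,i}$ sera purement formel : la multiplication $\pi : \mathcal{F}_{\sigma\tau}^{[i]} \to \mathcal{F}_{\sigma\tau}^{[i-1]}$ induit, par fonctorialité de la puissance extérieure puis passage au quotient modulo $\pi$, le morphisme $\bigwedge^d \pi : \bigwedge^d \mathcal{F}_{\sigma\tau}^{[i]}/\pi \to \bigwedge^d \mathcal{F}_{\sigma\tau}^{[i-1]}/\pi$ annoncé. Enfin, lorsque $d_{\tau,i}=0$, le gradué $\mathcal{F}_{\sigma\tau}^{[i]}/\mathcal{F}_{\sigma\tau}^{[i-1]}$ est nul, donc $\mathcal{F}_{\sigma\tau}^{[i]}=\mathcal{F}_{\sigma\tau}^{[i-1]}$ et l'application se réduit à l'identité. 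Je n'anticipe pas d'obstacle réel au-delà de la vérification soigneuse de l'appariement des modules dans le lemme \ref{lem210} et du décalage d'indice $\sigma\tau \leftrightarrow \tau$ qui produit bien le rang $d_{\tau,i}$ attendu.
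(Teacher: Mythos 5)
Ta preuve est correcte et suit exactement la démarche du papier, dont la démonstration se réduit d'ailleurs à : « les faisceaux $\mathcal F_{\sigma\tau}^{[i]}$ étant localement facteurs directs, il suffit d'invoquer le lemme \ref{lem210}, et la factorisation se démontre comme dans la proposition \ref{lem211} ». Tu as bien identifié les deux points qui rendent ce transfert licite (les propriétés structurelles de la filtration $\mathcal F_{\sigma\tau}^{[\bullet]}$ fournies par la proposition qui précède, et le fait que l'élément répété des générateurs du noyau vit dans le petit module $\mathcal F_{\sigma\tau}^{[i-1]}$, de sorte que son image par $\pi$ tombe dans $\pi\cdot\mathcal F_{\sigma\tau}^{[i-1]}$ et annule le produit extérieur modulo $\pi$).
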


\begin{proof}
Les faisceaux $\mathcal F_{\sigma \tau}^{[i]}$ étant localement facteur direct, il suffit d'invoquer le lemme \ref{lem210}. La démonstration de la factorisation est la même que 
celle de la proposition \ref{lem211}.
\end{proof}

Pour que nos constructions coïncident (pas seulement à un inversible près), introduisons $u \dans O_L^\times$ tel que $p = u\pi^e$.

\begin{prop} 
Soit $\tau \in \T$. Pour $d > d_{\tau,1}$, l'application
\[ H_\tau^d  : \bigwedge^{d_{\tau,1}} \mathcal F^{[1]}_{\sigma \tau} \otimes \bigwedge^{d-d_{\tau,1}} \Ker V_{\sigma \tau} \fleche \bigwedge^d \mathcal E_{\tau}^{(p)}/\pi\]
induite localement, sous l'isomorphisme $\Ker V_{\sigma \tau} = \im F_{\sigma \tau}$, par
\[ x_1 \wedge \dots \wedge x_{d_{\tau,1}} \otimes F_{\sigma \tau} y_{1} \wedge \dots \wedge F_{\sigma \tau} y_{d-d_{\tau,1}} \longmapsto 
\frac{1}{\pi^{e-1}}V_{\sigma \tau} x_1 \wedge \dots \wedge \frac{1}{\pi^{e-1}}V_{\sigma \tau} x_{d_{\tau,1}} \wedge uy_{1} \wedge \dots \wedge uy_{d-d_{\tau,1}}\]
est bien définie. De plus, elle se factorise à travers la surjection $ \bigwedge^{d_{\tau,1}} \mathcal F^{[1]}_{\sigma \tau} \otimes \bigwedge^{d-d_{\tau,1}} \Ker V_{\sigma \tau} \to \bigwedge^d \mathcal F^{[1]}_{\sigma \tau}$, et induit une application
\[ H_\tau^d : \bigwedge^d \mathcal F^{[1]}_{\sigma \tau}/\pi \fleche \bigwedge^d \mathcal E_{\tau}^{(p)}/\pi\]
Si $d \leq d_{\tau,1}$, on définit $H_\tau^d : \bigwedge^d \mathcal F^{[1]}_{\sigma \tau}/\pi \fleche \bigwedge^d \mathcal E_{ \tau}^{(p)}/\pi$ par $H_\tau = \bigwedge^d ( \frac{1}{\pi^{e-1}} \circ V_{\sigma \tau} )$.
\end{prop}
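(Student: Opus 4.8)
La stratégie est de ramener toute la construction à l'application $\frac{1}{\pi^{e-1}}\circ V_{\sigma\tau}$ et à un décompte de dimensions modulo $\pi$. Observons d'abord que pour $x\dans\mathcal F^{[1]}_{\sigma\tau}$ on a, par définition de $\mathcal F^{[1]}_{\sigma\tau}$, $V_{\sigma\tau}x\dans(\Fil^{[1]}\mathcal E_\tau)^{(p)}$. Comme $\pi\cdot\Fil^{[1]}\mathcal E_\tau\subset\Fil^{[0]}\mathcal E_\tau=0$, le faisceau $\Fil^{[1]}\mathcal E_\tau$ est tué par $\pi$, donc contenu dans la $\pi$-torsion $\mathcal E_\tau[\pi]=\pi^{e-1}\mathcal E_\tau$ (car $\mathcal E_\tau$ est localement libre sur $\mathcal O_S\otimes_{O_{L^{nr}},\tau}O_L$, localement isomorphe à $\mathcal O_S[\pi]/\pi^e$ puisque $E$ est d'Eisenstein). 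On a donc $V_{\sigma\tau}x\dans\pi^{e-1}\mathcal E_\tau^{(p)}$, et l'isomorphisme $\frac{1}{\pi^{e-1}}$ introduit plus haut fournit un morphisme bien défini $\frac{1}{\pi^{e-1}}\circ V_{\sigma\tau}:\mathcal F^{[1]}_{\sigma\tau}\fleche\mathcal E_\tau^{(p)}/\pi$. Cela règle le cas $d\leq d_{\tau,1}$, où l'on pose $H_\tau^d=\bigwedge^d(\frac{1}{\pi^{e-1}}\circ V_{\sigma\tau})$.

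Le c{\oe}ur de la preuve est le lemme de dimension suivant. Posons
\[ A:=\im\left(\frac{1}{\pi^{e-1}}\circ V_{\sigma\tau}:\mathcal F^{[1]}_{\sigma\tau}\fleche\mathcal E_\tau^{(p)}/\pi\right)=\frac{1}{\pi^{e-1}}(\Fil^{[1]}\mathcal E_\tau)^{(p)}\bmod\pi. \]
Comme $\frac{1}{\pi^{e-1}}$ est injective sur les modules concernés et que $\Fil^{[1]}\mathcal E_\tau$ est de rang $d_{\tau,1}$, on a $\dim A=d_{\tau,1}$. On affirme que $\im V_{\sigma\tau}\bmod\pi\subset A$. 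En effet, $\im V_{\sigma\tau}=(\Fil^{[e]}\mathcal E_\tau)^{(p)}=(\omega_{G,\tau})^{(p)}$, et l'itération de $\pi\cdot\Fil^{[i]}\mathcal E_\tau\subset\Fil^{[i-1]}\mathcal E_\tau$ donne $\pi^{e-1}\Fil^{[e]}\mathcal E_\tau\subset\Fil^{[1]}\mathcal E_\tau$ ; ainsi, pour $v\dans(\omega_{G,\tau})^{(p)}$, on a $\pi^{e-1}v\dans(\Fil^{[1]}\mathcal E_\tau)^{(p)}$, donc l'image de $v$ dans $\mathcal E_\tau^{(p)}/\pi$ est $\frac{1}{\pi^{e-1}}(\pi^{e-1}v)\dans A$.

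Traitons enfin le cas $d>d_{\tau,1}$. La formule définit une application multilinéaire, alternée en les $x_i$ et en les $y_j$, sur $(\mathcal F^{[1]}_{\sigma\tau})^{d_{\tau,1}}\times(\mathcal E_\tau^{(p)})^{d-d_{\tau,1}}$, d'où un morphisme sur $\bigwedge^{d_{\tau,1}}\mathcal F^{[1]}_{\sigma\tau}\otimes\bigwedge^{d-d_{\tau,1}}\mathcal E_\tau^{(p)}$. Pour en déduire une application sur $\bigwedge^{d_{\tau,1}}\mathcal F^{[1]}_{\sigma\tau}\otimes\bigwedge^{d-d_{\tau,1}}\Ker V_{\sigma\tau}$ via $\Ker V_{\sigma\tau}=\im F_{\sigma\tau}$, il faut vérifier que le résultat ne dépend pas du relèvement $y_j$ de $F_{\sigma\tau}y_j$ : c'est le point délicat. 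Un tel relèvement est défini modulo $\Ker F_{\sigma\tau}=\im V_{\sigma\tau}$ ($G$ étant un $\mathcal{BT}_1$) ; en remplaçant $y_l$ par $y_l+k$ avec $k\dans\im V_{\sigma\tau}$, l'image varie, par multilinéarité, de
\[ \frac{1}{\pi^{e-1}}V_{\sigma\tau}x_1\wedge\dots\wedge\frac{1}{\pi^{e-1}}V_{\sigma\tau}x_{d_{\tau,1}}\wedge uy_1\wedge\dots\wedge uk\wedge\dots\wedge uy_{d-d_{\tau,1}}, \]
où $uk$ occupe la $l$-ième place. Or $uk\dans\im V_{\sigma\tau}$ (puisque $u\dans O_L$ commute à $V_{\sigma\tau}$), donc $uk\bmod\pi\dans A$ par le lemme de dimension. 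Les $d_{\tau,1}+1$ vecteurs $\frac{1}{\pi^{e-1}}V_{\sigma\tau}x_1,\dots,\frac{1}{\pi^{e-1}}V_{\sigma\tau}x_{d_{\tau,1}},uk$ appartiennent donc à l'espace $A$ de dimension $d_{\tau,1}$ : ils sont liés modulo $\pi$ et le produit extérieur ci-dessus est nul. L'application $H_\tau^d$ est donc bien définie.

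Il reste la factorisation à travers $\bigwedge^{d_{\tau,1}}\mathcal F^{[1]}_{\sigma\tau}\otimes\bigwedge^{d-d_{\tau,1}}\Ker V_{\sigma\tau}\fleche\bigwedge^d\mathcal F^{[1]}_{\sigma\tau}$. D'après le lemme \ref{lem210} appliqué à $\Ker V_{\sigma\tau}\subset\mathcal F^{[1]}_{\sigma\tau}$ (quotient de rang $d_{\tau,1}$), le noyau de cette surjection est localement engendré par des tenseurs où un même $x\dans\Ker V_{\sigma\tau}$ figure dans le facteur $\bigwedge^{d_{\tau,1}}\mathcal F^{[1]}_{\sigma\tau}$ et dans le facteur $\bigwedge^{d-d_{\tau,1}}\Ker V_{\sigma\tau}$. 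En appliquant $H_\tau^d$ à un tel tenseur, le facteur $\frac{1}{\pi^{e-1}}V_{\sigma\tau}x$ apparaît, et il est nul car $x\dans\Ker V_{\sigma\tau}$ : l'image est donc nulle. Ainsi $H_\tau^d$ tue ce noyau et se factorise par $\bigwedge^d\mathcal F^{[1]}_{\sigma\tau}$. Enfin $H_\tau^d$ commute à la multiplication par $\pi$ (car $\sigma(\pi)=\pi$) et le but est tué par $\pi$ ; elle passe donc au quotient en $\bigwedge^d\mathcal F^{[1]}_{\sigma\tau}/\pi\fleche\bigwedge^d\mathcal E_\tau^{(p)}/\pi$, et ces morphismes locaux canoniques se recollent. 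L'obstacle principal reste l'indépendance au choix des relèvements $y_j$, qui repose entièrement sur l'inclusion $\im V_{\sigma\tau}\bmod\pi\subset A$ établie ci-dessus.
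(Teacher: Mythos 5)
Votre démonstration est correcte et suit essentiellement la même stratégie que celle du texte : le point clé (l'indépendance du relèvement $y_j$) est obtenu en constatant que les $d_{\tau,1}+1$ vecteurs concernés vivent dans un sous-module localement libre de rang $d_{\tau,1}$ — votre $A=\frac{1}{\pi^{e-1}}(\Fil^{[1]}\mathcal E_\tau)^{(p)}$ est exactement l'image par l'isomorphisme $\frac{1}{\pi^{e-1}}$ du module $\Fil^{[1]}\mathcal E_\tau^{(p)}$ utilisé dans le texte, qui travaille dans $\mathcal E_\tau[\pi]^{(p)}$ avant d'appliquer cet isomorphisme. La factorisation via le lemme \ref{lem210} et l'annulation de $V_{\sigma\tau}x$ pour $x\in\Ker V_{\sigma\tau}$ est identique.
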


\begin{proof} 
Démontrons la première assertion. Considérons la flèche,
\begin{equation*}
\bigwedge^{d_{\tau,1}} \mathcal F^{[1]}_{\sigma \tau} \otimes \bigwedge^{d-d_{\tau,1}} \Ker V_{\sigma \tau} \fleche \bigwedge^d \mathcal E_\tau[\pi]^{(p)}\end{equation*}
induite localement, sous l'isomorphisme $\Ker V_{\sigma \tau} = \im F_{\sigma \tau}$, par,
\[ x_1 \wedge \dots \wedge x_{d_{\tau,1}} \otimes F_{\sigma \tau} y_{1} \wedge \dots \wedge F_{\sigma \tau} y_{d-d_{\tau,1}} \longmapsto 
V_{\sigma \tau}x_1 \wedge \dots \wedge V_{\sigma \tau} x_{d_{\tau,1}} \wedge \pi^{e-1}uy_{1} \wedge \dots \wedge \pi^{e-1}uy_{d-d_{\tau,1}}\]
Cette application est bien définie car si $y_1,y_1' \dans \mathcal E_\tau^{(p)}$ vérifient que $F_{\sigma \tau} y_1 = F_{\sigma \tau} y_1'$ alors 
$y_1 - y_1' \dans \Ker F_{\sigma \tau} = \im V_{\sigma \tau} = \Fil^{[e]}\mathcal E_{ \tau}^{(p)}$
et donc $\pi^{e-1}u(y_1-y_1') \dans \Fil^{[1]}\mathcal E_{\tau}^{(p)}$. Or $V_{\sigma \tau} x_1,\dots V_{\sigma \tau} x_{d_{\tau,1}} \dans \Fil^{[1]}\mathcal E_\tau^{(p)}$, et comme ce dernier module est localement libre sur $\mathcal O_S$ de rang $d_{\tau,1}$, on a
$$V_\tau x_1 \wedge \dots \wedge V_\tau x_{d_{\tau,1}} \wedge u\pi^{e-1}(y_{1}-y_1') \wedge u\pi^{e-1} y_2 \wedge \dots \wedge u\pi^{e-1}y_{d-d_{\tau,1}} = 0 $$ 
ce qui prouve le résultat.
L'isomorphisme $\frac{1}{\pi^{e-1}} : \mathcal E_\tau[\pi]^{(p)} \simeq \mathcal E_\tau^{(p)}/\pi$ permet de définir l'application $H_\tau^d$. \\
D'après le lemme \ref{lem210}, on a une flèche surjective,
\[\bigwedge^{d_{\tau,1}} \mathcal F^{[1]}_{\sigma \tau} \otimes \bigwedge^{d-d_{\tau,i}} \Ker V_{\sigma \tau} \fleche \bigwedge^{d} \mathcal F^{[1]}_{\sigma \tau}\]
dont le noyau est localement engendré, d'après le lemme \ref{lem210}, par les tenseurs,
\[ x\wedge y_1 \wedge \dots \wedge y_{d_{\tau,1}-1} \otimes x \wedge F_{\sigma \tau} z_1 \wedge \dots \wedge F_{\sigma \tau} z_{d-d_{\tau,1}-1}\]
où $x \dans \Ker V_{\sigma \tau}$ donc $V_ {\sigma \tau} x = 0$, et donc l'application $H_\tau^d$ passe au quotient.
\end{proof}

\begin{defi} 
Pour tout $\tau \in \T$, et tout entier $d \geq 1$, on peut alors construire une application,
\[ \zeta_\tau^d : \bigwedge^d \mathcal E_{\sigma\tau}/\pi \fleche \bigwedge^d \mathcal E_\tau^{(p)}/\pi\]
en considérant la composée des applications précédentes
\[ \zeta_\tau^d = H_\tau^d \circ \bigwedge^{\min(d,d_{\tau,2})}\pi \circ \dots \circ \bigwedge^{\min(d,d_{\tau,e})} \pi.\]
\end{defi}

\begin{center}
\begin{tikzpicture}[description/.style={fill=white,inner sep=2pt}] 
\matrix (m) [matrix of math nodes, row sep=3em, column sep=2.5em, text height=1.5ex, text depth=0.25ex] at (0,0)
{ 
\bigwedge^{d} \mathcal E_{\sigma\tau}/\pi = \bigwedge^d \mathcal F_{\sigma \tau}^{[e]} /\pi& & & &  \bigwedge^{d} \mathcal E_\tau^{(p)}/\pi  \\
\bigwedge^d \mathcal F_{\sigma \tau}^{[e-1]} / \pi & & & &\\
 && & & \\
\bigwedge^d \mathcal F_{\sigma \tau}^{[1]} / \pi &  && & \\
 };

%\draw[double,double distance=5pt] (m-1-1)  (m-1-3);
\path[->,font=\scriptsize] 
(m-1-1) edge node[auto,left] {$\bigwedge^{\min(d,d_{\tau,e})}\pi$} (m-2-1)
(m-4-1) edge node[auto,right] {$H_\tau^d$} (m-1-5)
(m-1-1) edge node[auto] {$\zeta_\tau^d$} (m-1-5);
\path[dashed,->,font=\scriptsize] 
(m-2-1) edge node[auto,left] {$\bigwedge^{\min(d,d_{\tau,2})}\pi\circ\dots\circ\bigwedge^{\min(d,d_{\tau,e-1})}\pi$} (m-4-1)
;
\end{tikzpicture}
\end{center}

Cette application est, lorsque $\mathcal E$ est le cristal de la $p$-torsion d'un groupe $p$-divisible $G_\infty$ sur un corps parfait, la même que l'application $\zeta_\tau^d$ définie dans la section 
précédente, ce qui justifie que l'on utilise la même notation. En effet, on peut construire aussi $H_\tau^d$ sur le cristal de $G_\infty$, et dans la section précédente pour construire $\zeta_\tau^d$, on appliquait 
$\bigwedge^d V_{\sigma \tau}$ suivi des applications $\bigwedge^{\min(d,d_{\tau,i})}\pi$, pour $i = e,\dots,1$, et enfin d'une division par $\pi^{de}$. Ici, on applique d'abord les morphismes $\bigwedge^{\min(d,d_{\tau,i})}\pi$ pour $i = e,\dots,2$, sur la filtration $\mathcal F_{\sigma\tau}^{[i]}$, puis l'application $H_\tau$. Dans le cas d'un groupe $p$-divisible sur un corps parfait, cette application est égale à $\frac{1}{\pi^{d_{\tau,1}(e-1)}}\bigwedge^{d_{\tau,1}} V_{\sigma\tau} \otimes \bigwedge^{d-d_{\tau,1}} u F_{\sigma\tau}^{-1}$ si $d > d_{\tau,1}$, et à $\frac{1}{\pi^{d(e-1)}}\bigwedge^{d} V_{\sigma\tau}$ sinon. Or comme sur l'isocristal d'un groupe $p$-divisible, $F^{-1} = \frac{1}{p}V$, $H_\tau$ est donc égal à $\frac{\pi^{\min(d,d_{\tau,1})}}{\pi^{de}}\bigwedge^d V_{\sigma \tau}$. 

\begin{defi}
Pour tout $\tau \in \T$ et $1 \leq i \leq e$, définissons le fibré en droites sur $S$
\[ \mathcal L_{\tau}^{[i]} := \det\left(\Gr^{[i]}\omega_{G,\tau}\right)\] 
\end{defi}

À partir de maintenant, faisons l'hypothèse simplificatrice suivante,
\begin{hypo} 
\phantomsection\label{hypdec}
Supposons que pour tout $\tau$ on a $d_{\tau,1} \geq d_{\tau,2} \geq \dots \geq d_{\tau,e}$.
\end{hypo}

\begin{prop} 
\phantomsection\label{prop215}
Soit $\tau \in \T$ et $1 \leq i \leq e$ ; en utilisant les applications précédentes, on peut construire une application
\[ HA_{\tau}^{[i]} : \bigwedge^{d_{\tau,i}} \Fil^{[i]}\mathcal E_\tau/\pi \fleche \bigwedge^{d_{\tau,i}}\Fil^{[i]}\mathcal E_\tau^{(p^f)}/\pi\]
qui correspond dans le cas où $S = \Spec k$ est un corps parfait de caractéristique $p$, et $G$ provient d'un groupe $p$-divisible, à l'application de la section précédente.
De plus, cette application passe au quotient, et induit une application
\[ \Ha_{\tau}^{[i]} : \mathcal L_{\tau}^{[i]} \fleche\left(\mathcal L_{\tau}^{[i]})\right)^{p^f}\]
De manière équivalente, on a une section d'un fibré inversible, \[\Ha_{\tau}^{[i]} \dans H^0 \left(S,\left(\mathcal L_{\tau}^{[i]}\right)^{p^f-1} \right)\]
Lorsque $d_{\tau,i} = 0$, sous la convention \ref{conv}, on pose $\Ha_\tau^{[i]} = 1$.
\end{prop}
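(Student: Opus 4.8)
The plan is to reproduce over the general base $S$ the composite that defines $HA_\tau^{[i]}$ in Theorem~\ref{divparf}, replacing each division by $p$ or by $\pi$ (illegal as such in characteristic $p$) by one of the characteristic-$p$ devices built above: the descent maps $\bigwedge^{\min(d,d_{\tau,j})}\pi$ of Proposition~\ref{lem211} and Proposition~\ref{appdiv}, the maps $H_\tau^{d}$ and $\zeta_\tau^{d}$ (which encode ``$V$ divided by a power of $\pi$'' through $F_\tau V_\tau = V_\tau F_\tau = p$), and the isomorphisms $\tfrac{1}{\pi^{j}}\colon (\pi^{j}\mathcal E_\tau)/\pi \fleche \mathcal E_\tau/\pi$ for $j\leq e-1$ of the lemma preceding Convention~\ref{conv}. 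Writing $d = d_{\tau,i}$, I would build $HA_\tau^{[i]}$ as a composite whose \emph{first} arrow is the descent $\bigwedge^{d}\pi\colon \bigwedge^{d}\Fil^{[i]}\mathcal E_\tau/\pi \fleche \bigwedge^{d}\Fil^{[i-1]}\mathcal E_\tau/\pi$ of Proposition~\ref{lem211} (for $d = d_{\tau,i}$ this is the plain $d$-th exterior power of $\pi$), which then runs once around the cycle of $f$ embeddings incorporating $f$ Verschiebungs in total (one per embedding) via the $\zeta_{\tau'}^{d}$, and lands back in $\bigwedge^{d}\Fil^{[i]}\mathcal E_\tau^{(p^f)}/\pi$ after a repositioning $Mul_{\tau,i}$ down the Hodge filtration (again by Proposition~\ref{lem211}, the internal $\mathcal F_\tau^{[\bullet]}$-descents of Proposition~\ref{appdiv} being already built into $\zeta_\tau^{d}$).

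The first thing to check is that this composite is well defined modulo $\pi$ and that it specializes correctly. For the specialization: when $S=\Spec k$ is perfect and $G=G_\infty[p]$ one has $\mathcal E=D/pD$, and each elementary morphism reduces modulo $p$ to its counterpart in Section~\ref{corps} — this was already verified for $\zeta_\tau^{d}$ in the discussion following its definition, and the descent arrows reduce to the $\pi^{\min(d,d_{\tau,j})}$ of Corollary~\ref{corpi}. The correctness criterion is then an exponent count: the $f$ maps $\zeta_{\tau'}^{d}$ divide in total by $\pi^{\sum_{\tau'\in\T}k_{\tau',d}}$, which is exactly the optimal exponent $K=\sum_{\tau',j}\max(d-d_{\tau',j},0)$ of Theorem~\ref{divparf} (with $k_{\tau',d}$ as in Proposition~\ref{pro35}), so that after reduction mod $p$ the whole composite is $\bigwedge^{d}V^{f}$ divided by $\pi^{K}$. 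I expect the genuinely delicate part to be precisely this bookkeeping and the correct \emph{reorganization} of the $\pi$-exponents relative to Section~\ref{corps}: in characteristic $p$ one has $\Fil^{[0]}\mathcal E_\tau=0$, so the step ``descend to $\Fil^{[0]}$ and divide by $\pi^{ed}$'' of Section~\ref{corps} is simply unavailable and must be redistributed among the $H_\tau^{d}$, the $\zeta_\tau^{d}$ and the isomorphisms $\tfrac{1}{\pi^{j}}$ ($j\leq e-1$) — this is exactly what the $\tfrac{1}{\pi^{e-1}}\circ V$ device and the unit $u$ with $p=u\pi^{e}$ are designed for — all the while routing consistently between the Hodge filtration $\Fil^{[\bullet]}\mathcal E_\tau$ and the pulled-back filtration $\mathcal F_\tau^{[\bullet]}$.

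The passage to the quotient is then formal and repeats the mechanism at the end of the proof of Theorem~\ref{divparf}. The line bundle $\mathcal L_\tau^{[i]}=\det\Gr^{[i]}\omega_{G,\tau}$ is the quotient of $\bigwedge^{d}\Fil^{[i]}\mathcal E_\tau/\pi$ by the submodule generated locally by the decomposables $w\wedge v_2\wedge\cdots\wedge v_{d}$ with $w\dans\Fil^{[i-1]}\mathcal E_\tau$. Since the first operation of $HA_\tau^{[i]}$ is $\bigwedge^{d}\pi$, such a generator is sent into $\pi\cdot\bigwedge^{d}\Fil^{[i-1]}\mathcal E_\tau/\pi$, which vanishes because this module is killed by $\pi$; as all remaining operations are $O_L$-linear or $\sigma$-semilinear, hence commute with multiplication by $\pi$, and the final target is $\pi$-torsion, one gets $HA_\tau^{[i]}(w\wedge v_2\wedge\cdots\wedge v_{d})=0$. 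Therefore $HA_\tau^{[i]}$ factors through $\det\Gr^{[i]}\omega_{G,\tau}$ and yields the $\sigma^{-f}$-linear map $\Ha_\tau^{[i]}\colon \mathcal L_\tau^{[i]}\fleche(\mathcal L_\tau^{[i]})^{p^f}$, equivalently a section of $(\mathcal L_\tau^{[i]})^{p^f-1}$; the case $d_{\tau,i}=0$ is handled by Convention~\ref{conv}, giving $\Ha_\tau^{[i]}=1$. The main obstacle, to reiterate, lies entirely in the characteristic-$p$ well-definedness and the exponent redistribution of the construction, the quotient step being automatic once the first descent arrow is identified as $\bigwedge^{d}\pi$.
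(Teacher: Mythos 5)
Your overall strategy --- assembling the characteristic-$p$ substitutes ($\zeta_{\tau'}^{d}$, the descent maps $\bigwedge^{\min(d,d_{\tau,j})}\pi$, the isomorphisms $\frac{1}{\pi^{j}}$), checking the reduction to the perfect-field case, and passing to the quotient by killing $\Fil^{[i-1]}$ --- is the right one, and you correctly locate the difficulty in the fact that $\Fil^{[0]}\mathcal E_\tau=0$ makes the step $Div_{\tau,i}$ of section \ref{corps} unavailable. But the concrete composite you propose does not close up, precisely at that point. Your first arrow $\bigwedge^{d}\pi\colon\bigwedge^{d}\Fil^{[i]}\mathcal E_\tau/\pi\fleche\bigwedge^{d}\Fil^{[i-1]}\mathcal E_\tau/\pi$ goes \emph{down} the Hodge filtration, whereas the chain of maps $\zeta_{\tau'}^{d}$ starts from the full module $\bigwedge^{d}\mathcal E_\tau/\pi$, and you never say how to get from $\bigwedge^{d}\Fil^{[i-1]}\mathcal E_\tau/\pi$ there: continuing the descent ends at $\bigwedge^{d}\Fil^{[0]}\mathcal E_\tau/\pi=0$, and the naive inclusion $\Fil^{[j]}\mathcal E_\tau\subset\mathcal E_\tau$ destroys the exponent count. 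The missing ingredient --- the one genuinely new point of the paper's proof --- is that, under the hypothesis \ref{hypdec} and (BTO), one has $\Fil^{[i]}\mathcal E_\tau\subset\pi^{e-i}\mathcal E_\tau$, so that the analogue of $Div_{\tau,i}$ is the natural map $\bigwedge^{d}\Fil^{[i]}\mathcal E_\tau/\pi\fleche\bigwedge^{d}(\pi^{e-i}\mathcal E_\tau)/\pi$ followed by the isomorphism $\bigwedge^{d}\frac{1}{\pi^{e-i}}$; this is exactly where the ordering $d_{\tau,1}\geq\dots\geq d_{\tau,e}$ is used, since it gives $ed_{\tau,i}-\sum_{j\leq i}\min(d_{\tau,i},d_{\tau,j})=(e-i)d_{\tau,i}$. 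Your exponent bookkeeping is moreover inconsistent with your own composite: you attribute the whole optimal exponent $K=\sum_{\tau',j}\max(d-d_{\tau',j},0)$ to ``$f$ maps $\zeta_{\tau'}^{d}$'', whereas the construction uses only $f-1$ of them plus a bare $(\bigwedge^{d}V_{\sigma\tau})^{(p^{f-1})}$, the contribution $k_{\tau,d}$ of the slot $\tau$ being split between the $Div$-type step at the start and $Mul_{\tau,i}$ at the end; with your extra initial multiplication by $\pi^{d}$ the net exponent would not be $K$.

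The passage to the quotient must be rerouted accordingly: in the paper it follows from $\Fil^{[i-1]}\mathcal E_\tau\subset\pi^{e-i+1}\mathcal E_\tau$, whose image in $(\pi^{e-i}\mathcal E_\tau)/\pi$ vanishes, not from a first arrow $\bigwedge^{d}\pi$. Your argument for that step would be fine if your composite existed, but as written it rests on a map that has not been constructed.
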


\begin{proof} 
On peut regarder la composée suivante
\begin{center}
\begin{tikzpicture}[description/.style={fill=white,inner sep=2pt}] 
\matrix (m) [matrix of math nodes, row sep=3em, column sep=2.5em, text height=1.5ex, text depth=0.25ex] at (0,0)
{ 
\bigwedge^{d_{\tau,i}} \mathcal E_\tau/\pi & & & &  & & \bigwedge^{d_{\tau,i}} \Fil^{[e]}\mathcal E_\tau^{(p^f)}/\pi  \\
\bigwedge^{d_{\tau,i}} (\pi^{e-i}\mathcal{E}_\tau)/\pi&  & && & &\bigwedge^{d_{\tau,i}} \Fil^{[i]}\mathcal E_\tau^{(p^f)}/\pi\\
 };

%\draw[double,double distance=5pt] (m-1-1)  (m-1-3);
\path[->,font=\scriptsize] 
(m-2-1) edge node[auto] {$\bigwedge^{d_{\tau,i}}\frac{1}{\pi^{e-i}}$} (m-1-1)
(m-1-1) edge node[auto] {$(\bigwedge^{d_{\tau,i}}V_{\sigma \tau})^{(p^{f-1})} \circ (\zeta_{\sigma\tau}^{d_{\tau,i}})^{(p^{f-2})} \circ \dots \circ \zeta_{\sigma^{-1} \tau}^{d_{\tau,i}}$} (m-1-7)
(m-1-7) edge node[auto] {$(\bigwedge^{d_{\tau,i+1}}\pi \circ\dots\circ \bigwedge^{d_{\tau,e}}\pi)^{(p^f)}$} (m-2-7);

;
\end{tikzpicture}
\end{center}
La composée de cette application avec le morphisme $\bigwedge^{d_{\tau,i}} \Fil^{[i]}\mathcal E_\tau /\pi \to \bigwedge^{d_{\tau,i}} (\pi^{e-i}\mathcal{E}_\tau)/\pi$ donne l'application $HA_{\tau}^{[i]}$. \\
Cette application passe bien au quotient comme annoncé puisque l'image de $\Fil^{[i-1]} \mathcal E_\tau$ par l'application $\Fil^{[i]}\mathcal E_\tau \fleche (\pi^{e-i} \mathcal{E}_\tau) /\pi$ est nulle $(\Fil^{[i-1]} \mathcal E_\tau \subset \pi^{e-i+1}\mathcal E_\tau$).

Pour voir que cette application coïncide avec celle de la section précédente, lorsque $G$ est la $p$-torsion d'un groupe $p$-divisible $G_\infty$ sur un corps parfait, il suffit de regarder le cristal $D$ de $G_\infty$, qui est un $W_{O_L}(k)$-module libre. On a alors justifié que les applications $\bigwedge^{\min(d_{\tau,i},d_{\tau',j})}\pi$ et $\pi^{\min(d_{\tau,i},d_{\tau',j})}$ coïncidaient, ainsi que les applications 
$\zeta_{\tau'}^{d_{i,\tau}}$. Il faut alors voir que l'application 
\[\bigwedge^{d_{\tau,i}}\Fil^{[i]}\mathcal E_\tau /\pi \to \bigwedge^{d_{\tau,i}} (\pi^{e-i}\mathcal{E}_\tau)/\pi \overset{\bigwedge^{d_{\tau,i}}\frac{1}{\pi^{e-i}}}{\fleche}\bigwedge^{d_{\tau,i}}
(\mathcal{E}_\tau)/\pi,\]
coïncide avec l'application $Div_{\tau,i}$ de la section $\ref{corps}$. Rappelons que l'application $Div_{\tau,i}$ 
%\begin{center}
%\begin{tikzpicture}[description/.style={fill=white,inner sep=2pt}] 
%\matrix (m) [matrix of math nodes, row sep=3em, column sep=2.5em, text height=1.5ex, text depth=0.25ex] at (0,0)
%{ 
%\bigwedge^{d_{\tau,i}} \Fil^{[i]} D_\tau & & \bigwedge^{d_{\tau,i}} D_\tau\\
%\bigwedge^{d_{\tau,i}} \Fil^{[i-1]} D_\tau & & \\
% & & \\
%\bigwedge^{d_{\tau,i}} \Fil^{[1]} D_\tau & &\\
%\bigwedge^{d_{\tau,i}} \Fil^{[0]} D_\tau  & & \\
% };
%
%\draw[double,double distance=5pt] (m-1-1)  (m-1-3);
%\path[->,font=\scriptsize] 
%(m-1-1) edge node[auto,left] {$\pi^{\min(d_{\tau,i},d_{\tau,i})} = \pi^{d_{\tau,i}}$} (m-2-1)
%(m-5-1) edge node[auto,right] {$\frac{1}{\pi^{ed_{\tau,i}}}$} (m-1-3)
%(m-4-1) edge node[auto,left] {$\pi^{\min(d_{\tau,i},d_{\tau,1})}$} (m-5-1)
%;
%\path[dashed,->,font=\scriptsize]
%(m-2-1) edge node[auto,left] {$\pi^{\min(d_{\tau,i},d_{\tau,2})}\circ\dots\circ\pi^{\min(d_{\tau,i},d_{\tau,i-1})}$} (m-4-1)
%; 
%\end{tikzpicture}
%\end{center}
correspond à une division par $\pi$ à la puissance $ed_{\tau,i} - \sum_{j \leq i} \min(d_{\tau,i},d_{\tau,j})$. Mais cette quantité, sous l'hypothèse \ref{hypdec}, est égale à $ed_{\tau,i} - \sum_{j\leq i} d_{\tau,i} = (e-i)d_{\tau,i}$. L'application $Div_{\tau,i}$ est alors égale à $\bigwedge^{d_{\tau,i}}\frac{1}{\pi^{e-i}}$.
\end{proof}

\begin{rema} 
Si l'hypothèse \ref{hypdec} n'est pas vérifiée la construction a toujours un sens, mais certaines applications construites seront toujours $0$ (plus précisément $\bigwedge^{d_{\tau,i}}\frac{1}{\pi^{e-i}}$ pour un certain $(\tau,i)$ est alors nulle modulo $\pi$). Il devrait être possible de construire des invariants non identiquement nuls dans ce cas, mais l'hypothèse \ref{hypdec} simplifie la situation. Remarquons que les variétés définies par Pappas et Rapoport dans \cite{P-R} nécessitent de choisir un ordre sur les éléments $(d_{\tau,i})_i$ pour tout $\tau \in \T$, ce qui rend l'hypothèse \ref{hypdec} inoffensive en pratique.
\end{rema}

\noindent On peut alors construire l'invariant de Hasse $\mu$-ordinaire.

\begin{defi}
Soit $S$ un schéma de caractéristique $p$, et $G \to S$ un $\mathcal{BT}_1$ muni d'une action de $O_L$ qui vérifie l'hypothèse (BTO). On définit alors son $\mu$-invariant de Hasse comme le produit des invariants précédents
\[ {^\mu}\Ha = \bigotimes_{\tau,i} \Ha_\tau^{[i]} \dans H^0 \left(S,\det \left(\omega_{G}\right)^{p^f-1} \right).\]
\end{defi}

\section{Groupes $p$-divisibles $\mu$-ordinaires}
\phantomsection\label{sect4}

Dans cette partie, on fixe une donnée $\mu = (d_{\tau,i})_{\tau \in \T, 1 \leq i \leq e}$ d'entiers avec $0 \leq d_{\tau,i} \leq h$ pour $\tau \in \T$ et $1 \leq i \leq e$. On suppose également qu'ils sont ordonnés, c'est-à-dire que pour tout $\tau \in \T$
$$d_{\tau,1} \geq \dots \geq d_{\tau,e}$$

\subsection{Définition}
\phantomsection\label{sect21}
Soit $k$ un corps parfait de caractéristique $p$ contenant le corps résiduel de $L$, et soit $G$ un groupe $p$-divisible sur $k$. Soit $(M,F,V)$ le module de Dieudonné (contravariant) associé à $G$. On suppose que $G$ a une action de $O_L$. Le module $M$ muni du Frobenius est donc un $F$-cristal avec une action de $O_L$. Si $h$ est le rang de $M$ (en tenant compte de l'action de $O_L$), alors la hauteur de $G$ est $ef h$. \\
On suppose que $G$ satisfait la condition de Pappas-Rapoport pour la donnée $\mu$. Cette condition peut être vue sur le cristal $(M,F)$. \\
On définit les polygones $\Newt_{O_L} (G)$ et $\Hdg_{O_L} (G)$ comme étant égaux respectivement à $\Newt_{O_L} (M,F)$, $\Hdg_{O_L} (M,F)$. On rappelle que l'on a défini un polygone $\PR(\mu) := \PR(d_\bullet)$, et que l'on a les égalités
$$\Newt_{O_L} (G) \geq \Hdg_{O_L} (G) \geq \PR(\mu)$$

\begin{defi}
On dit que le groupe $p$-divisible $G$ satisfait la condition de Rapoport généralisée (pour la donnée $\mu$) si on a l'égalité
$$\Hdg_{O_L} (G) = \PR(\mu)$$
\end{defi}

Cette condition est automatique si $L$ est non ramifié. Lorsque la donnée $\mu$ est ordinaire (i.e. lorsque $d_{\tau,i}=d$ pour tout $\tau \in \T$ et $1 \leq i \leq e$), cette condition est équivalente au fait que le module $\omega_G$ soit libre sur $k \otimes_{\mathbb{Z}_p} O_L$. Cette dernière condition est souvent appelée condition de Rapoport, ce qui justifie la terminologie. \\
La condition de Rapoport généralisée signifie que la structure de $\omega_G$ comme $k \otimes_{\mathbb{Z}_p} O_L$-module est la meilleure possible étant donnée la condition PR. Par ailleurs, si $G$ satisfait la condition de Rapoport généralisée, alors la filtration sur chacun des $\omega_{G,\tau}$ dans la définition de la condition PR est uniquement déterminée.

\begin{defi}
On dit que le groupe $p$-divisible est $\mu$-ordinaire si on a l'égalité entre polygones
$$\Newt_{O_L} (G) = \PR(\mu)$$
\end{defi}

D'après les inégalités entre les trois polygones, si $G$ est $\mu$-ordinaire, alors il satisfait la condition de Rapoport généralisée. Nous donnerons dans la section suivante plusieurs caractérisations du fait d'être $\mu$-ordinaire. Avant cela, nous allons définir explicitement un groupe $p$-divisible $\mu$-ordinaire. \\
Soient $\beta = (\beta_{\tau})_{\tau \in \T}$ des entiers avec $0 \leq \beta_\tau \leq e$ pour tout $\tau \in \T$. On définit le module de Dieudonné $(M_{\beta},F,V)$ par
$M_\beta = \bigoplus_{\tau \in \T} M_{\beta,\tau}$ avec $M_{\beta,\tau} = W_{O_L,\tau} (k)$ pour tout $\tau \in \T$. Le Frobenius $F_\tau : M_{\beta, \sigma^{-1} \tau } \to M_{\beta, \tau}$ et le Verschiebung $V_\tau : M_{\beta,\tau} \to M_{\beta, \sigma^{-1} \tau}$ sont définis par 
$$F_\tau (x) = \pi^{\beta_\tau} \sigma(x)  \text{    et    }     V_\tau (y) = p \pi^{- \beta_\tau} \sigma^{-1} (y)$$
pour tout $\tau \in \T$, $x \in M_{\beta,\sigma^{-1} \tau}$ et $y \in M_{\beta,\tau}$. Remarquons que la valuation de $p \pi^{- \beta_\tau}$ est égale \`a $1 - \beta_\tau/e \geq 0$. \\
Le $F$-cristal $(M_\beta,F)$ a donc une action de $O_L$, et est de rang $1$. Les polygones $\Newt_{O_L} (M_\beta,F)$ et $\Hdg_{O_L} (M_\beta,F)$ sont égaux, et ont une seule pente égale \`a $(\sum_{\tau \in \T} \beta_\tau)/(ef)$. On notera $X_\beta$ le groupe $p$-divisible associé au module de Dieudonné $(M_\beta, F, V)$. C'est un groupe $p$-divisible sur $k$ de hauteur $ef$ muni d'une action de $O_L$. \\
Soit $r$ le cardinal de $\{ d_{\tau,i}, \tau \in \T, 1 \leq i \leq e \} \cap [1,h-1]$. On note
$$0 < D_1 < \dots < D_r <h$$
les éléments de cet ensemble. On pose également $D_0 = 0$ et $D_{r+1} = h$. Soit $1 \leq j \leq r+1$ ; pour $\tau \in \T$, on définit $\alpha_{j,\tau}$ comme le cardinal de l'ensemble $\{ 1 \leq i \leq e, d_{\tau,i} \geq D_j \}$. On définit $\alpha_j = (\alpha_{j,\tau})_{\tau \in \T}$ ; puisque les entiers $\alpha_{j,\tau}$ sont compris entre $0$ et $e$, on dispose donc du groupe $p$-divisible $X_{\alpha_j}$ sur $k$. Remarquons que $\alpha_{j+1} \leq \alpha_{j}$ pour tout $1 \leq j \leq r$.

\begin{defi}
\phantomsection\label{def33}
On définit le groupe $p$-divisible $X^{ord}$ sur $k$ par
$$X^{ord} := \prod_{j=1}^{r+1} X_{\alpha_j}^{D_j - D_{j-1}}$$
\end{defi}

\noindent Le groupe $p$-divisible $X^{ord}$ est muni d'une action de $O_L$, et est de hauteur $ef h$. De plus, on peut vérifier que les polygones $\Newt_{O_L} (X^{ord})$ et $\Hdg_{O_L} (X^{ord})$ sont égaux \`a $\PR(\mu)$.

\begin{prop}
Les polygones $\Newt_{O_L} (X^{ord})$ et $\Hdg_{O_L} (X^{ord})$ sont égaux \`a $\PR(\mu)$.
\end{prop}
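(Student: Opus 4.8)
Le plan est de ramener l'énoncé à un simple calcul de pentes, en exploitant que $X^{ord}$ est une somme directe des cristaux de rang $1$ que sont les $X_{\alpha_j}$, pour lesquels les polygones de Newton et de Hodge sont immédiats à décrire. Je commencerais par rappeler (ou vérifier brièvement) que, pour une somme directe de $F$-cristaux avec action de $O_L$, le polygone $\Newt_{O_L}$ (resp. chaque polygone $\Hdg_{O_L,\tau}$) est le polygone convexe dont les pentes sont la réunion, comptées avec multiplicités, des pentes des facteurs. Pour le polygone de Newton cela résulte de Dieudonné--Manin, l'isocristal de $X^{ord}$ étant la somme directe des isocristaux des $X_{\alpha_j}^{D_j-D_{j-1}}$ ; pour le polygone de Hodge relatif à $\tau$ cela résulte du théorème des diviseurs élémentaires appliqué à $F_\tau M_{\sigma^{-1}\tau}\subset M_\tau$, qui respecte les sommes directes.

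Ensuite, pour chaque $j$, le cristal $X_{\alpha_j}$ est de rang $1$ avec $F_\tau=\pi^{\alpha_{j,\tau}}\sigma$, d'où $M_{\alpha_j,\tau}/F_\tau M_{\alpha_j,\sigma^{-1}\tau}\simeq W_{O_L,\tau}(k)/\pi^{\alpha_{j,\tau}}$ : l'unique pente de $\Hdg_{O_L,\tau}(X_{\alpha_j})$ vaut $\alpha_{j,\tau}/e$, et l'unique pente de $\Newt_{O_L}(X_{\alpha_j})$ vaut $(\sum_\tau\alpha_{j,\tau})/(ef)$ (en composant les $F_\tau$ le long d'un cycle, $F^f=\pi^{\sum_\tau\alpha_{j,\tau}}\sigma^f$). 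Par l'étape précédente, $\Hdg_{O_L,\tau}(X^{ord})$ (resp. $\Newt_{O_L}(X^{ord})$) est donc le polygone convexe de pentes $\alpha_{j,\tau}/e$ (resp. $(\sum_\tau\alpha_{j,\tau})/(ef)$) avec multiplicité $D_j-D_{j-1}$, pour $1\leq j\leq r+1$.

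Le cœur de la preuve est alors la comparaison de ces pentes avec celles de $\PR_\tau(d_\bullet)$. La pente de $\PR_\tau(d_\bullet)$ sur $[m,m+1]$ vaut $\frac1e\#\{i : d_{\tau,i}\geq h-m\}$. Comme les $D_j$ sont exactement les valeurs distinctes prises par les $d_{\tau,i}$ dans $[1,h-1]$ (complétées par $D_0=0$, $D_{r+1}=h$), aucune valeur $d_{\tau,i}$ n'est strictement comprise entre $D_{j-1}$ et $D_j$ ; donc, pour $h-m\in(D_{j-1},D_j]$, on a $\#\{i : d_{\tau,i}\geq h-m\}=\#\{i : d_{\tau,i}\geq D_j\}=\alpha_{j,\tau}$. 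Autrement dit, $\PR_\tau(d_\bullet)$ a pour pente $\alpha_{j,\tau}/e$ sur l'intervalle $[h-D_j,h-D_{j-1}]$, de longueur $D_j-D_{j-1}$. Comme $\alpha_{j+1,\tau}\leq\alpha_{j,\tau}$ (les $D_j$ croissant), ces pentes forment une suite croissante au sens large lorsque $x$ croît : $\PR_\tau(d_\bullet)$ est donc précisément le polygone convexe de pentes $\alpha_{j,\tau}/e$ avec multiplicités $D_j-D_{j-1}$, soit $\Hdg_{O_L,\tau}(X^{ord})$. En moyennant sur $\tau$ on obtient $\Hdg_{O_L}(X^{ord})=\PR(\mu)$ ; et le même calcul appliqué à $\PR(\mu)=\frac1f\sum_\tau\PR_\tau(d_\bullet)$ donne la pente $\frac1{ef}\sum_\tau\alpha_{j,\tau}$ sur $[h-D_j,h-D_{j-1}]$, ce qui coïncide avec $\Newt_{O_L}(X^{ord})$, d'où $\Newt_{O_L}(X^{ord})=\PR(\mu)$.

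L'obstacle principal est cette dernière étape : il faut traduire soigneusement la description de $\PR_\tau$ comme somme de polygones élémentaires en une description par pentes triées par ordre croissant, et vérifier que la propriété « aucune valeur intermédiaire entre $D_{j-1}$ et $D_j$ » relie correctement le comptage $\#\{i : d_{\tau,i}\geq h-m\}$ à $\alpha_{j,\tau}$ ; les multiplicités $D_j-D_{j-1}$ s'accordent alors automatiquement avec les longueurs des intervalles $[h-D_j,h-D_{j-1}]$. Les deux premières étapes sont purement formelles.
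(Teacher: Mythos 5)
Votre démonstration est correcte et suit pour l'essentiel la même stratégie que celle de l'article : réduction aux facteurs de rang $1$ dont les polygones sont isoclins de pentes $\alpha_{j,\tau}/e$ (Hodge) et $(\sum_\tau\alpha_{j,\tau})/(ef)$ (Newton), puis identification des pentes de $\PR_\tau(d_\bullet)$ comme étant $\alpha_{j,\tau}/e$ avec multiplicité $D_j-D_{j-1}$ (l'article le fait en réécrivant $\PR_\tau$ comme combinaison des $\max(-h+D_j+i,0)$ pondérés par $\alpha_{j,\tau}-\alpha_{j+1,\tau}$, vous le faites en calculant la pente sur chaque intervalle $[m,m+1]$ — les deux calculs sont équivalents). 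Votre vérification de la croissance des pentes via $\alpha_{j+1,\tau}\leq\alpha_{j,\tau}$ correspond exactement à l'argument de concaténation de l'article.
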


\begin{proof} 
Soit $\tau \in \T$ ; le polygone $\Hdg_{O_L,\tau} (X^{ord})$ a pour pentes $\alpha_{j,\tau}/e$ avec multiplicité $D_j - D_{j-1}$ pour tout $1 \leq j \leq r+1$. Or pour tout $0 \leq j \leq r+1$, il y a exactement $\alpha_{j,\tau} - \alpha_{j+1,\tau}$ éléments parmi les $(d_{\tau,i})_{1 \leq i \leq e}$ qui sont égaux \`a $D_j$ (on pose $\alpha_{0,\tau} =e$ et $\alpha_{r+2,\tau} = 0$). On peut donc écrire pour $0 \leq i \leq h$
$$\PR_\tau(d_\bullet)(i) = \sum_{j=0}^{r+1} \frac{\alpha_{j,\tau} - \alpha_{j+1,\tau}}{e} \max(-h + D_j + i, 0)$$
On voit donc que les pentes du polygone $\PR_\tau(d_\bullet)$ sont exactement $\alpha_{j,\tau}/e$ avec multiplicité $D_j - D_{j-1}$ pour $1 \leq j \leq r+1$. Cela prouve que les polygones $\Hdg_{O_L,\tau} (X^{ord})$ et $\PR_\tau (d_\bullet)$ sont égaux pour tout $\tau \in \mathcal{T}$, ce qui donne l'égalité $\Hdg_{O_L}(X^{ord}) = \PR(\mu)$.
En particulier, comme les $\alpha_j$ sont ordonnés, $\Hdg_{O_L}(X^{ord})$ est la concatenation des $\Hdg_{O_L}(X_{\alpha_j}^{D_j-D_{j-1}})$. Or c'est aussi automatiquement le cas pour le polygone de Newton de $X^{ord}$, et comme les polygones de Hodge et Newton des $X_{\alpha_j}^{D_j-D_{j-1}}$ sont isoclins donc égaux, on en déduit l'égalité $\Newt_{O_L}(X^{ord}) = \Hdg_{O_L}(X^{ord})$.
\end{proof}

\subsection{Caractérisations}

Dans cette partie, nous allons donner plusieurs caractérisations des groupes $p$-divisibles $\mu$-ordinaires.

\begin{theo}
\phantomsection\label{equimuord}
Supposons que $k$ est algébriquement clos, et soit $G$ un groupe $p$-divisible sur $k$ avec une action de $O_L$ satisfaisant la condition PR pour la donnée $\mu$. Alors les conditions suivantes sont équivalentes.
\begin{itemize}
\item $G$ est $\mu$-ordinaire.
\item $G$ est isog\`ene \`a $X^{ord}$.
\item $G$ est isomorphe \`a $X^{ord}$.
\end{itemize}
\end{theo}

\noindent La condition $G$ isogène à $X^{ord}$ signifie qu'il existe une isogénie $O_L$-linéaire entre les groupes $p$-divisibles $G$ et $X^{ord}$. 

\begin{proof}
La troisième condition implique évidemment la deuxième. Puisque le polygone de Newton est invariant par isogénie, la deuxième condition implique la première. Supposons maintenant que $G$ est $\mu$-ordinaire, et soit $(M,F)$ le $F$-cristal associé à $G$. D'après le théorème de décomposition Hodge-Newton, on a une décomposition
$$M := \bigoplus_{j=1}^{r+1} M_j$$
avec $(M_j,F_j)$ un $F$-cristal avec action de $O_L$ de rang $D_j - D_{j-1}$. De plus, les polygones $\Hdg_{O_L,\tau} (M_j,F_j)$ et $\Newt_{O_L}(M_j,F_j)$ sont isoclines de pentes respectives $\alpha_{\tau,j}/e$ et $\sum_{\tau \in \T} \alpha_{\tau,j}/(ef)$. \\
Nous voulons démontrer que $G$ est isomorphe à $X^{ord}$. Comme le foncteur induit par le module de Dieudonné est pleinement fidèle, il suffit de montrer que $M$ est isomorphe au module de Dieudonné de $X^{ord}$, ou encore que chaque cristal $(M_j,F_j)$ est isomorphe à $(M_{\alpha_j}^{D_j - D_{j-1}},F)$. Le module $M_j$ se décompose en $\oplus_{\tau \in \T} M_{j,\tau}$, et le Frobenius $F_j$ induit des applications $F_{j,\tau} : M_{j,\sigma^{-1} \tau} \to M_\tau$. Comme le polygone $\Hdg_{O_L,\tau} (M_j,F_j)$ est isocline de pente $\alpha_{j,\tau}/e$, $F_\tau$ est divisible par $\pi^{\alpha_{j,\tau}}$. En divisant chaque $F_\tau$ par $\pi^{\alpha_{j,\tau}}$, on obtient un cristal $(M_j,F_j')$ avec action de $O_L$ dont le polygone de Newton est isocline de pente $0$. On en déduit que $(M_j,F_j') \simeq (M_{\beta_0}^{D_j-D_{j-1}},F)$, avec $\beta_0=(0,\dots,0)$, et donc que $(M_j,F_j) \simeq (M_{\alpha_j}^{D_j - D_{j-1}},F)$ pour tout $1 \leq j \leq r+1$.
\end{proof}

Le fait d'être $\mu$-ordinaire peut également être vu à l'aide des invariants de Hasse construits dans la section précédente. Dans le reste de cette partie, on considère un groupe $p$-divisible $G$ sur un corps algébriquement clos $k$, muni d'une action de $O_L$ et satisfaisant la condition PR pour la donnée $\mu$. On rappelle que les points de rupture du polygone $\PR(\mu)$ ont pour abscisses les éléments $h - d_{\tau,i}$, pour $\tau \in \T$ et $1 \leq i \leq e$.

%\begin{rema}
%La condition de Pappas-Rapoport d'un groupe $p$-divisible sur $k$ un corps parfait se lit sur son algèbre de Lie, ou de manière équivalente sur son module de Dieudonné (contravariant) $(M,F)$.
%C'est aussi sur ce module que sont construit les différents polygones (Hodge, Newton). Par contre, on a donné une construction des invariants de Hasse sur le cristal de Dieudonné (contravariant) $(D,V)$, et à priori il n'est pas direct de relier les deux notions. Par contre, on a l'égalité $D = M^{\sigma}$, et donc les polygones de Hodge et Newton de $M$ et $D$
%coïncident. De plus, quitte à regarder l'application (l'isogénie) $V : M \fleche M^{\sigma}$, celle-ci envoie la filtration au sens de la section \ref{sect1},
%\[ F_\tau M_{\sigma^{-1}\tau} = \Fil^{[0]}M_\tau \subset \Fil^{[1]}M_\tau \subset \dots \subset \Fil^{e}M_\tau = M_\tau,\]
%sur la filtration du sous-objet $\omega_{G,\tau}$ de $D/pD$ au sens de la section \ref{sect2}.
%En particulier, grâce à cette application, on peut utiliser les résultats de la section \ref{sect1} pour le cristal filtré $(D,V,(\omega_{G,\tau}^{[\cdot]})_\tau)$ (par exemple car $V$ induit aussi une isogénie sur les puissances extérieures).
%\end{rema}

\begin{prop}
\phantomsection\label{pro38}
Soient $\tau \in \T$ et $1 \leq i \leq e$ tel que $d_{\tau,i}$ ne soit pas égal à $0$ ou $h$. Alors
$$\Ha_{\tau}^{[i]} (G) \text{ est non nul } \Leftrightarrow \Newt_{O_L} (G) (h - d_{\tau,i}) = \PR(\mu) (h - d_{\tau,i})$$
\end{prop}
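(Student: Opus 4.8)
The plan is to work over $\bar k=k$ with the contravariant Dieudonné module $(M,F,V)$ and the cristal $D\simeq M^{(\sigma)}$, and to turn the statement into the optimality of a single $\pi$-adic valuation. Set $d:=d_{\tau,i}$. By Theorem \ref{divparf} the map $\Ha_\tau^{[i]}(G)$ is induced on the line $\det\Gr^{[i]}\omega_{G,\tau}=\bigwedge^{d}\Gr^{[i]}\omega_{G,\tau}$ by $\bigwedge^{d}V^f$ divided by $\pi^{K}$, where $K=\sum_{\tau'\in\T}\sum_{j=1}^{e}\max(d-d_{\tau',j},0)$. I would first record the combinatorial identity
\[K=ef\big(d-\PR(\mu)(h)+\PR(\mu)(h-d)\big),\]
obtained by writing $\max(d-d_{\tau',j},0)=\max(d_{\tau',j}-d,0)+(d-d_{\tau',j})$ and identifying the two resulting sums with $ef\cdot\PR(\mu)(h-d)$ and $ef\,d-ef\cdot\PR(\mu)(h)$ (recall $\sum_{\tau',j}d_{\tau',j}=ef\cdot\PR(\mu)(h)$). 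Thus $K$ is exactly the exponent prescribed by the PR polygon at the abscissa $h-d=h-d_{\tau,i}$.

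Next I would make the non-vanishing criterion explicit. Choose a $W_{O_L,\tau}(k)$-basis $(e_1,\dots,e_h)$ of $\Fil^{[i]}D_\tau$ such that $(\pi e_1,\dots,\pi e_d,e_{d+1},\dots,e_h)$ is a basis of $\Fil^{[i-1]}D_\tau$; then $\det\Gr^{[i]}\omega_{G,\tau}$ is generated by the image of $\omega:=e_1\wedge\dots\wedge e_d$, and every other basis element of $\bigwedge^{d}\Fil^{[i]}D_\tau$ maps to $0$ in $\bigwedge^{d}\Gr^{[i]}\omega_{G,\tau}$. Writing $\eta:=e_{d+1}\wedge\dots\wedge e_h$ and $\delta:=\bigwedge^{d}V^f(\omega)\wedge\eta\in\bigwedge^{h}\Fil^{[i]}D_\tau$, the coefficient of $\omega$ in $\bigwedge^{d}V^f(\omega)$ equals $\delta$ under the identification $\bigwedge^{h}\Fil^{[i]}D_\tau\simeq W_{O_L,\tau}(k)$, $e_1\wedge\dots\wedge e_h\mapsto 1$. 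Since Theorem \ref{divparf} gives $\bigwedge^{d}V^f(\bigwedge^{d}\Fil^{[i]}D_\tau)\subset\pi^{K}\bigwedge^{d}\Fil^{[i]}D_\tau$, one has $v_\pi(\delta)\ge K$ for the $\pi$-adic valuation $v_\pi$ (normalised by $v_\pi(\pi)=1$), and dividing by $\pi^{K}$ yields
\[\Ha_\tau^{[i]}(G)\neq 0\iff v_\pi(\delta)=K.\]

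The proposition then follows from the valuation statement
\[v_\pi(\delta)=ef\big(d-\PR(\mu)(h)+\Newt_{O_L}(G)(h-d)\big),\]
since combined with the identity for $K$ and with $\Newt_{O_L}(G)\ge\PR(\mu)$ (Theorem \ref{thr118}) it gives $v_\pi(\delta)=K$ exactly when $\Newt_{O_L}(G)(h-d)=\PR(\mu)(h-d)$. The right-hand side is the aggregate $\pi$-valuation of the $d$ smallest slopes of $V^f$ on $M_\tau$, read off by putting $(M_\tau,F^f)$, equivalently $V^f=p^f(F^f)^{-1}$, into Dieudonné-Manin normal form. I would prove the two halves separately. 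For the contact case I note that the abscissa $h-d_{\tau,i}$ is a break point of $\PR(\mu)$, so that, since $\Newt_{O_L}(G)\ge\Hdg_{O_L}(G)\ge\PR(\mu)$ with equality there, a short convexity argument (the left slope of $\Newt_{O_L}(G)$ is $\le$, and its right slope $\ge$, the corresponding slopes of $\PR(\mu)$) shows $h-d_{\tau,i}$ is automatically a break point of $\Newt_{O_L}(G)$. Theorem \ref{thr119} then splits $(M,F)=(M_1,F_1)\oplus(M_2,F_2)$ with $M_1$ of rank $h-d$ carrying the bottom $h-d$ slopes and $M_2$ of rank $d$ carrying the top ones; choosing $\eta$ inside $\bigwedge^{h-d}M_{1,\tau}$ forces $\delta$ to be computed entirely in the block $M_2$, where Newton and Hodge coincide and the division by $\pi^{K}$ is manifestly optimal, giving $v_\pi(\delta)=K$ and $\Ha_\tau^{[i]}(G)\neq0$. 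Conversely, if $\Ha_\tau^{[i]}(G)\neq0$, i.e. $v_\pi(\delta)=K$, the resulting $\sigma^{-f}$-linear automorphism of the line $\det\Gr^{[i]}\omega_{G,\tau}$ produces, after inverting $p$ and iterating (as in the rank-one step of Theorem \ref{thr119}), a $V^f$-stable piece realising the slope sum $\PR(\mu)(h)-\PR(\mu)(h-d)$ on the top $d$ slopes, whence $\Newt_{O_L}(G)(h-d)\le\PR(\mu)(h-d)$ and therefore equality.

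The step I expect to be the main obstacle is the exact valuation formula for $v_\pi(\delta)$, and in particular the converse direction. The difficulty is that $V^f$ does not preserve the filtration $\Fil^{[i]}$, so the Hodge/PR data (responsible for the lower bound $K$ and the $\pi$-adic depth $\Fil^{[i]}D_\tau\subset\pi^{e-i}D_\tau$) and the Newton data (responsible for the term $ef\cdot\Newt_{O_L}(G)(h-d)$) must be controlled simultaneously; the exterior-power relations of Proposition \ref{ext}, which convert the abscissa $h-d$ for $(M,F)$ into the abscissa $1$ for $\bigwedge^{h-d}(M,F)$ dually to the $\bigwedge^{d}$ defining the invariant, are what align the two normalisations, and the Hodge-Newton decomposition is what makes the threshold $K$ actually attained. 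Once this bookkeeping is in place the equivalence is immediate.
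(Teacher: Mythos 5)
Your reduction of the statement to a valuation problem is sound and matches the paper's set-up: you correctly identify that $\Ha_\tau^{[i]}(G)\neq 0$ if and only if the coefficient $\delta$ of $\omega$ in $\bigwedge^{d}V^f(\omega)$ has $\pi$-valuation exactly $K=k_{\tau,i}$, and your combinatorial identity $K=ef\bigl(d-\PR(\mu)(h)+\PR(\mu)(h-d)\bigr)$ is exactly the paper's computation of $\PR(\mu)(h-d_{\tau,i})$ transported to the $\sigma^{-1}$-crystal $(D,V)$. But the central step — the equivalence between $v_\pi(\delta)=K$ and the Newton contact — is where your argument has a genuine gap, and it is precisely the step you flag as the obstacle. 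Your proposed exact formula $v_\pi(\delta)=ef\bigl(d-\PR(\mu)(h)+\Newt_{O_L}(G)(h-d)\bigr)$ identifies the valuation of one specific minor of $V^f$, taken in a basis adapted to $\Fil^{[i-1]}D_\tau\subset\Fil^{[i]}D_\tau$, with a Newton-polygon quantity; this is not a consequence of general principles (a single matrix coefficient of a $\sigma$-linear map can have valuation strictly smaller or larger than the relevant Newton slope sum), so everything rests on your Hodge--Newton argument. That argument requires choosing $\eta=e_{d+1}\wedge\dots\wedge e_h$ inside $\bigwedge^{h-d}M_{1,\tau}$ \emph{within a basis adapted to the Pappas--Rapoport step} $\Fil^{[i-1]}D_\tau\subset\Fil^{[i]}D_\tau$, i.e.\ it requires the PR filtration to be compatible with the Hodge--Newton splitting $M_\tau=M_{1,\tau}\oplus M_{2,\tau}$, with the graded piece $\Fil^{[i]}/\Fil^{[i-1]}$ supported on the block $M_{2,\tau}$. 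Nothing in the hypotheses (a single contact point, not $\mu$-ordinarity) guarantees this, and you do not prove it. The converse direction ("iterating as in the rank-one step of Theorem \ref{thr119} produces a $V^f$-stable piece") is only gestured at: the line $\det\Gr^{[i]}\omega_{G,\tau}$ is a \emph{quotient} of $\bigwedge^{d}\Fil^{[i]}D_\tau$, not a subobject, so invertibility of the induced map does not directly hand you a $V^f$-stable submodule.

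The paper's proof avoids both difficulties with one structural observation that is absent from your proposal. Set $\Lambda=\bigwedge^{d_{\tau,i}}\Fil^{[i]}D_\tau$ (a lattice, since $pD_\tau\subset\Fil^{[i]}D_\tau\subset V_{\sigma\tau}D_{\sigma\tau}$) and $\phi=\pi^{-k_{\tau,i}}V^f$, and let $N=\Ker\bigl(\Lambda\to\det\Gr^{[i]}\omega_{G,\tau}\bigr)$. The divisibility established in the proof of Theorem \ref{divparf} gives $\phi(N)\subset\pi\Lambda$, whence $N\subset\Lambda^{\mathrm{nilp}}+\pi\Lambda$ for the \'etale/nilpotent decomposition $\Lambda=\Lambda^{et}\oplus\Lambda^{\mathrm{nilp}}$; since $\Lambda/N$ is a line, $\Ha_\tau^{[i]}$ is invertible if and only if $\Lambda^{et}\neq 0$, which via Proposition \ref{ext} is exactly the condition $\Newt_{O_L}(G)(h-d_{\tau,i})=\PR(\mu)(h-d_{\tau,i})$. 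This replaces your unproven compatibility of filtrations by a containment that follows from the very construction of the invariant, and it handles both directions at once. I would encourage you to either prove the filtration compatibility you are implicitly using (I do not believe it holds in the required generality), or to adopt the lattice-theoretic argument.
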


\begin{proof}
Soit $(M,F,V)$ le module de Dieudonné, $(D,F,V)$ le cristal de Dieudonné sur $W(k)$ associé à $G$. Comme remarqué après la définition \ref{defPRG},
\[ M/FM = \omega_G = VD/pD\]
et $V : M \fleche D$ est une isogénie, donc préserve les polygones de Newton. Puisque nous utilisons le Verschiebung $V$ pour construire les invariants de Hasse, nous allons relier le polygone de Newton de $G$ au polygone du $\sigma^{-1}$-cristal $(D,V)$. Les relations $FV = VF = p$, impliquent
$$\Newt_{O_L} (D,V) (x) = \Newt_{O_L} (G) (h-x) + x - \Newt_{O_L} (G) (h) $$
Notons que 
$$\PR(\mu) (h - d_{\tau,i}) = \frac{1}{ef}  \sum_{\tau' \in \T} \sum_{j=1}^e \max(d_{\tau',j} - d_{\tau,i},0)$$
La condition $\Newt_{O_L} (G) (h - d_{\tau,i}) = \PR(\mu) (h - d_{\tau,i})$ est donc équivalente à $\Newt_{O_L} (D,V) (d_{\tau,i}) = k_{\tau,i} / (ef)$, avec
$$k_{\tau,i} := \sum_{\tau' \in \T} \sum_{j=1}^e \max(d_{\tau,i} - d_{\tau',j},0)$$
Il suffit donc de prouver que $\Ha^{[i]}_\tau$ est inversible si et seulement si $\Newt_{O_L}(D,V)(d_{\tau,i}) = k_{\tau,i}/(ef)$. Remarquons que $k_{\tau,i}$ est exactement la puissance de $\pi$ par laquelle on divise le Verschiebung à la puissance $f$ pour construire $\Ha^{[i]}_\tau$. \\
Regardons alors le cristal 
$(\bigwedge^{d_{\tau,i}} D_\tau, V^f)$, le produit extérieur étant pris sur $W_{O_L,\tau}(k)$. Il suffit de voir à quelle condition le polygone de Newton de $(\bigwedge^{d_{\tau,i}} D_\tau, V^f)$ a pour valeur $k_{\tau,i}/e$ en 1, 
d'après la proposition \ref{ext} (plus exactement son analogue pour $(D,V)$). 
Pour cela regardons le réseau $(\bigwedge^{d_{\tau,i}} \Fil^{[i]} D_\tau, V^f)$, c'est bien un réseau puisque $pD_\tau \subset \Fil^{[i]}D_\tau \subset VD_{\sigma\tau}$. Or sur ce réseau $V^f$ est divisible par $\pi^{k_{\tau,i}}$ d'après le théorème \ref{divparf}, on peut donc regarder le cristal
\[ (\bigwedge^{d_{\tau,i}}\Fil^{[i]}D_\tau,\frac{1}{\pi^{k_{\tau,i}}}V^f ) =: (\Lambda, \phi)\]
Il suffit donc de voir à quelle condition le polygone de Newton du cristal $(\Lambda,\phi)$ (qui est  celui de $(\bigwedge^{d_{\tau,i}}D_\tau,V^f)$, dont on a soustrait $k_{\tau,i}/e$ à toutes les
pentes) a une pente 0, c'est à dire qu'il existe un sous-cristal sur lequel $\phi$ est inversible.
Mais si on regarde la projection
\[ \bigwedge^{d_{\tau,i}}\Fil^{[i]}D_\tau \overset{q}{\fleche} \bigwedge^{d_{\tau,i}} \omega^{[i]}_\tau/\omega^{[i-1]}_\tau\]
son noyau $N = \Ker q = \im(\Fil^{[i-1]}D_\tau\otimes \bigwedge^{d_i-1}\Fil^{[i]}D_\tau \fleche \bigwedge^{d_i}\Fil^{[i]}D_\tau)$ vérifie (c'est ce qu'on a vérifié pour voir que 
$\Ha_\tau^{[i]}$ passe au quotient) $\phi(N) \subset \pi \Lambda$.
Maintenant on peut écrire $\Lambda = \Lambda^{et} \oplus \Lambda^{nilp}$ (partie où $\phi$ est inversible, et partie où $\phi$ est nilpotent), et on a donc montré que 
\[ N \subset \Lambda^{nilp} + \pi \Lambda\]
Mais maintenant comme $\bigwedge^{d_{\tau,i}} \omega_\tau^{[i]}/\omega_\tau^{[i-1]}$ est de dimension 1 sur $k$, il est équivalent de demander qu'il existe un sous espace 
non nul sur lequel $\phi$ est inversible ou de demander que $\Ha_\tau^{[i]}$ soit inversible. On a donc
\[ \Ha_\tau^{[i]} \text{ inversible } \Leftrightarrow \Lambda^{et} \neq 0 \Leftrightarrow \Newt_{O_L,\tau}(\bigwedge^{d_{\tau,i}}D_\tau, V^f)(1) = k_{\tau,i}/e \Leftrightarrow \Newt_{O_L}(G)(h-d_{\tau,i}) = PR(\mu)(h-d_{\tau,i}).\qedhere\]
\end{proof}

\begin{coro} 
Le groupe $p$-divisible $G$ est $\mu$-ordinaire si et seulement si ${^\mu}\Ha(G)$ est non nul.
\end{coro}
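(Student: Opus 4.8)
Le plan est de ramener l'énoncé, qui porte sur l'invariant total ${^\mu}\Ha(G)$, à une information sur chacun des facteurs $\Ha_\tau^{[i]}$, puis d'exploiter la proposition \ref{pro38} et la convexité des polygones. Comme $G$ est défini sur le corps $k$, une section d'un fibré inversible sur $\Spec k$ n'est autre qu'un élément du corps (après trivialisation) ; le produit tensoriel ${^\mu}\Ha(G) = \bigotimes_{\tau,i} \Ha_\tau^{[i]}$ est donc non nul si et seulement si chacun des facteurs $\Ha_\tau^{[i]}$ est non nul. Il suffit ainsi de caractériser la $\mu$-ordinarité par la non-nullité simultanée de tous les $\Ha_\tau^{[i]}$.

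Première étape : traiter les facteurs de bord. Si $d_{\tau,i} = 0$, on a $\Ha_\tau^{[i]} = 1$ par convention, donc non nul. Si $d_{\tau,i} = h$, l'abscisse associée $h - d_{\tau,i} = 0$ est le point initial commun ; la preuve de la proposition \ref{pro38} s'étend à ce cas de bord (le module $\bigwedge^h \Fil^{[i]}D_\tau$ est de rang $1$), et montre que la non-nullité de $\Ha_\tau^{[i]}$ équivaut à une égalité portant sur le point terminal commun des polygones $\Newt_{O_L}(G)$ et $\PR(\mu)$ (via la relation $FV = VF = p$), ce qui est automatique puisque ces polygones ont mêmes extrémités. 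On en retient que les facteurs de bord sont toujours non nuls. La vérification que la puissance $\pi^{k_{\tau,h}}$ utilisée dans la construction est exactement $v(\det V^f)$, de sorte que le morphisme divisé est inversible, est un calcul de routine à partir de $k_{\tau,h}/(ef) = \Newt_{O_L}(D,V)(h)$.

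Deuxième étape : les facteurs intérieurs et la convexité. Pour $0 < d_{\tau,i} < h$, la proposition \ref{pro38} donne directement $\Ha_\tau^{[i]} \neq 0 \Leftrightarrow \Newt_{O_L}(G)(h - d_{\tau,i}) = \PR(\mu)(h - d_{\tau,i})$. D'après la formule explicite $\PR(\mu)(j) = \frac{1}{ef}\sum_{\tau,i}\max(j - h + d_{\tau,i},0)$, les abscisses de rupture intérieures du polygone convexe $\PR(\mu)$ sont précisément les valeurs $h - d_{\tau,i}$ avec $0 < d_{\tau,i} < h$. Il reste à invoquer le lemme de convexité élémentaire suivant : si $P \geq Q$ sont deux polygones convexes sur $[0,h]$ de mêmes extrémités, alors $P = Q$ si et seulement si $P$ et $Q$ coïncident en chaque sommet de $Q$. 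En effet, entre deux sommets consécutifs de $Q$, ce dernier est affine et égal à sa corde ; $P$ étant convexe et coïncidant avec $Q$ aux deux extrémités du segment, $P$ reste sous cette corde, donc $P = Q$ sur le segment.

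Conclusion. On applique ce lemme à $P = \Newt_{O_L}(G) \geq Q = \PR(\mu)$, qui ont mêmes points initial et terminal d'après la section précédente. Ainsi $G$ est $\mu$-ordinaire, c'est-à-dire $\Newt_{O_L}(G) = \PR(\mu)$, si et seulement si les deux polygones coïncident en chaque sommet de $\PR(\mu)$, donc en chaque abscisse $h - d_{\tau,i}$ avec $0 < d_{\tau,i} < h$ ; par la proposition \ref{pro38}, cela équivaut à la non-nullité de tous les facteurs intérieurs, et comme les facteurs de bord sont toujours non nuls, à la non-nullité de tous les $\Ha_\tau^{[i]}$, soit encore à ${^\mu}\Ha(G) \neq 0$. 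L'essentiel du travail analytique étant déjà contenu dans la proposition \ref{pro38}, la principale difficulté restante est purement combinatoire : identifier exactement les sommets de $\PR(\mu)$ avec les abscisses $h - d_{\tau,i}$ et traiter séparément, mais sans peine, les cas de bord $d_{\tau,i} \in \{0,h\}$.
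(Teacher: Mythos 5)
Votre démonstration est correcte et suit essentiellement l'argument que l'article laisse implicite : le corollaire y est déduit directement de la proposition \ref{pro38} jointe au fait, rappelé juste avant celle-ci, que les points de rupture de $\PR(\mu)$ ont pour abscisses les $h-d_{\tau,i}$, la convexité et l'égalité des extrémités de $\Newt_{O_L}(G)\geq\PR(\mu)$ faisant le reste. Vous explicitez en outre soigneusement les cas de bord $d_{\tau,i}\in\{0,h\}$ (exclus de l'énoncé de la proposition \ref{pro38}), ce qui est un complément utile mais ne change pas la nature de l'argument.
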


\begin{coro}
Le groupe $p$-divisible $G$ est $\mu$-ordinaire si et seulement si $G[p]$ est isomorphe à $X^{ord}[p]$.
\end{coro}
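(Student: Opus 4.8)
\emph{Plan.} The statement is the equivalence $(1)\Leftrightarrow(4)$ of the big theorem, and I would deduce it entirely from the results already in place: Theorem \ref{equimuord} (over the algebraically closed $k$ fixed in this subsection, $\mu$-ordinarity is equivalent to $G\simeq X^{ord}$) and the corollary just above, which says that $G$ is $\mu$-ordinaire if and only if ${^\mu}\Ha(G)$ is non-zero. The decisive extra input is that the construction of ${^\mu}\Ha$ (Proposition \ref{prop215}, in the ``Cas général'' setting) only uses the evaluated crystal $\mathcal E$ of the $\mathcal{BT}_1$ $G[p]$ together with its $O_L$-action and its Pappas--Rapoport filtration; by Remark \ref{remBTO1} the hypothesis (BTO) holds here because $G[p]$ is the $p$-torsion of an honest $p$-divisible group. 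Thus ${^\mu}\Ha(G)$ is a functorial invariant of the triple $(G[p],\iota,\Fil^{\bullet})$.

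The forward direction $(1)\Rightarrow(4)$ is immediate: if $G$ is $\mu$-ordinaire, Theorem \ref{equimuord} furnishes an $O_L$-linear isomorphism $G\simeq X^{ord}$, and restricting to $p$-torsion gives $G[p]\simeq X^{ord}[p]$.

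For the converse $(4)\Rightarrow(1)$, suppose $\phi:G[p]\overset{\sim}{\fleche}X^{ord}[p]$ is an $O_L$-linear isomorphism of $\mathcal{BT}_1$'s. First I would upgrade $\phi$ to an isomorphism of the filtered data. Since $\phi$ is $O_L$-linear it is $O_{L^{nr}}$-linear, hence induces a $\T$-graded, $\pi$-equivariant isomorphism $\omega_G=\omega_{G[p]}\simeq\omega_{X^{ord}[p]}=\omega_{X^{ord}}$ of $k\otimes_{\ZZ_p}O_L$-modules. Because the Hodge polygon is read off from the $W_{O_L,\tau}(k)$-module structure of $\omega_{G,\tau}\simeq M_\tau/F_\tau M_{\sigma^{-1}\tau}$, this yields $\Hdg_{O_L}(G)=\Hdg_{O_L}(X^{ord})=\PR(\mu)$, so $G$ satisfies the generalized Rapoport condition. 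As noted after its definition, the Pappas--Rapoport filtration is then uniquely determined by the $O_L$-module structure of $\omega_G$ (and likewise for $X^{ord}$), so $\phi$ automatically carries $\Fil^{\bullet}\omega_G$ to $\Fil^{\bullet}\omega_{X^{ord}}$. Consequently $\phi$ identifies all the data entering the construction, whence an isomorphism of line bundles carrying ${^\mu}\Ha(G)$ to ${^\mu}\Ha(X^{ord})$; in particular one vanishes iff the other does. Since $X^{ord}$ is $\mu$-ordinaire, the preceding corollary gives ${^\mu}\Ha(X^{ord})\neq 0$, hence ${^\mu}\Ha(G)\neq 0$, and applying that same corollary to $G$ shows $G$ is $\mu$-ordinaire.

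The main obstacle is precisely the step in the converse where an \emph{a priori} abstract isomorphism of $\mathcal{BT}_1$'s must be shown to respect the Pappas--Rapoport filtrations, since without this the comparison of Hasse invariants is not transparent. My resolution hinges on the uniqueness of the filtration under the generalized Rapoport condition, so the point I would verify most carefully is that $G[p]\simeq X^{ord}[p]$ really does force generalized Rapoport — which it does, through the equality of the Hodge polygon of $\omega$ — thereby making the filtration canonical and hence preserved by any $O_L$-equivariant isomorphism.
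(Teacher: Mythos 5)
Your proof is correct and follows essentially the same route as the paper: both reduce the converse to the previous corollary via the observation that ${^\mu}\Ha$ depends only on $G[p]$. You in fact supply a detail the paper leaves implicit --- that an $O_L$-linear isomorphism $G[p]\simeq X^{ord}[p]$ automatically matches the Pappas--Rapoport filtrations, because it forces $\Hdg_{O_L}(G)=\PR(\mu)$ and the filtration is then uniquely determined by the generalized Rapoport condition --- which is a worthwhile clarification of the paper's one-line justification.
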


\begin{proof}
On a déjà montré dans le théorème \ref{equimuord} que si $G$ est $\mu$-ordinaire, alors il est isomorphe à $X^{ord}$, donc a fortiori $G[p] \simeq X^{ord}[p]$.
Supposons donc que $G[p] \simeq X^{ord}[p]$. Alors ${^\mu}\Ha(G) = {^\mu}\Ha(X^{ord})$, puisque l'invariant ${^\mu}\Ha$ ne dépend que de la $p$-torsion, donc ${^\mu}\Ha(G)$ est inversible, 
et par le corollaire précédent $G$ est $\mu$-ordinaire.
\end{proof}

\subsection{Groupes $p$-divisibles $\mu$-ordinaires sur un anneau artinien}

Dans cette partie, nous étudions les groupes $p$-divisibles sur un anneau artinien de caractéristique $p$. Soit $k$ un corps algébriquement clos de caractéristique $p$, et soit $R$ une $k$-algèbre locale artinienne de corps résiduel $k$. Soit $G$ un groupe $p$-divisible sur $R$ muni d'une action de $O_L$. On suppose que $G$ satisfait la condition PR pour la donnée $\mu$.

\begin{defi}
On dit que $G$ est $\mu$-ordinaire si $G \times_{R} k$ l'est.
\end{defi}

\begin{prop} \phantomsection\label{filt}
On suppose que $G$ est $\mu$-ordinaire. Alors il existe une unique filtration de $G$ par des groupes $p$-divisibles
$$0=G_0 \subset G_1 \subset \dots \subset G_r \subset G_{r+1}=G  $$
telle que $ (G_j / G_{j-1}) \times_{R} k \simeq X_{\alpha_j}^{D_j - D_{j-1}}$ pour $1 \leq j \leq r+1$.
\end{prop}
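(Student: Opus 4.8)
The plan is to identify the required filtration with the \emph{slope filtration} of $G$ and to read off everything else from the structure of the special fibre. First I would record the slopes. By the proposition computing $\Newt_{O_L}(X^{ord})$, the isoclinic block $X_{\alpha_j}$ has ($O_L$-normalised, equivalently usual) slope $s_j = \frac{1}{ef}\sum_{\tau\in\T}\alpha_{j,\tau}$; since each value $D_j$ is attained by some $d_{\tau_0,i_0}$ one has $\alpha_{j,\tau_0}\geq \alpha_{j+1,\tau_0}+1$, whence $s_1 > s_2 > \dots > s_{r+1}$. Reducing modulo $\mathfrak m_R$ and applying Theorem \ref{equimuord} gives $G\times_R k \simeq X^{ord} = \prod_{j=1}^{r+1} X_{\alpha_j}^{D_j-D_{j-1}}$, so the special fibre is a $p$-divisible group whose Newton polygon has exactly the $r+1$ distinct slopes $s_1 > \dots > s_{r+1}$, the isoclinic constituent of slope $s_j$ being $X_{\alpha_j}^{D_j-D_{j-1}}$, of height $ef(D_j-D_{j-1})$.

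Since $R$ is artinian local, $\Spec(R)$ is a single point, so $G$ has constant Newton polygon over $R$, equal to that of its special fibre. I would then invoke the slope filtration theorem for $p$-divisible groups of constant Newton polygon (Grothendieck's conjecture, proved by Zink and Oort--Zink): $G$ carries a unique filtration $0=G_0\subset G_1\subset\dots\subset G_{r+1}=G$ by $p$-divisible subgroups such that $G_j/G_{j-1}$ is isoclinic of slope $s_j$, with $s_1>\dots>s_{r+1}$. Because this filtration is canonical, it is preserved by every endomorphism of $G$, in particular by $\iota(O_L)$; hence each $G_j$ is stable under the $O_L$-action and the filtration is one with action, as demanded.

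It then remains to identify the graded pieces on the special fibre and to settle uniqueness. The slope filtration commutes with base change, the subquotients staying isoclinic of slope $s_j$; reducing modulo $\mathfrak m_R$ therefore yields the slope filtration of $G\times_R k\simeq X^{ord}$, whose $j$-th graded piece is the slope-$s_j$ constituent $X_{\alpha_j}^{D_j-D_{j-1}}$, $O_L$-equivariantly. For uniqueness, any filtration as in the statement has $j$-th subquotient with special fibre $X_{\alpha_j}^{D_j-D_{j-1}}$, which is isoclinic of slope $s_j$; as the Newton polygon is constant over $R$ the subquotient is itself isoclinic of slope $s_j$, so the filtration has isoclinic subquotients of strictly decreasing slopes and hence coincides with the slope filtration.

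The genuine content — and the only real obstacle — is the existence of the slope filtration over the possibly non-reduced base $R$. Being $\mu$-ordinary over $R$ is a condition on the special fibre alone and supplies no extra structure over $R$, so the filtration cannot be produced by a naive Grothendieck--Messing lifting of the splitting of $X^{ord}$: the extensions between distinct-slope constituents do not split over $R$ in general, which is exactly why one obtains a filtration rather than a direct sum. It is precisely the constancy of the Newton polygon that forces the filtration to exist and to be unique, and everything else — distinctness of the $s_j$, functoriality for the $O_L$-action, and the base-change identification of the graded pieces — is formal.
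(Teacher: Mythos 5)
There is a genuine gap, and it is fatal: your argument never uses the hypothesis that $G$ satisfies the Pappas--Rapoport condition \emph{over $R$}, only that its special fibre is $\mu$-ordinary. But the statement you are then actually proving is false. The remark immediately following Proposition \ref{filt} in the paper exhibits (for $e=3$, $h=2$, $(d_1,d_2,d_3)=(0,1,2)$) an $O_L$-linear deformation of $X^{ord}=X_1\times X_2$ over $k[\varepsilon]$, obtained by an admissible tilt of the Hodge filtration, which admits no filtration of the required type. That deformation has ``constant Newton polygon'' in the only available sense ($\Spec k[\varepsilon]$ has a single point), so constancy of the Newton polygon cannot by itself force the filtration to exist over a non-reduced artinian base. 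Concretely, the slope filtration theorems you invoke do not say what you need: Zink and Oort--Zink produce a completely slope divisible group only \emph{up to isogeny}, and under hypotheses on the base (regular, resp.\ normal integral Noetherian) that exclude a non-reduced artinian local ring. Over a perfect field the slope filtration (indeed decomposition) exists on the nose, but it does not lift canonically through square-zero thickenings --- that is exactly the phenomenon the counterexample displays.

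The missing idea is that the Pappas--Rapoport filtration on $\omega_{G,\tau}$ over $R$ rigidifies the Hodge filtration enough to cut out the sub-$p$-divisible group. The paper's proof reduces to a square-zero extension $R\to k$, evaluates the crystal of $G\times_R k$ on this thickening to get $\mathcal D=\oplus_j\mathcal D_j$ with $\mathcal D_j$ the crystal of $X_{\alpha_j}^{D_j-D_{j-1}}$, and uses the PR condition to show that $\Fil_\tau$ contains $\pi^{e-\alpha_{r+1,\tau}}\mathcal D_\tau$ (because $\Fil_\tau^{[\alpha_{r+1,\tau}]}$ is a direct factor of the correct rank sitting inside $\pi^{e-\alpha_{r+1,\tau}}\mathcal D_\tau$), whence $\Fil_\tau\cap\mathcal D_{r+1,\tau}=\pi^{e-\alpha_{r+1,\tau}}\mathcal D_{r+1,\tau}$ is a direct factor; Grothendieck--Messing then produces $G_r\subset G$, and one concludes by induction on the slope blocks and on a chain of square-zero thickenings. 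Your observations that the slopes $s_1>\dots>s_{r+1}$ are strictly decreasing and that any filtration as in the statement must have isoclinic graded pieces are correct and do give uniqueness once existence is established, but existence is precisely the non-formal part and requires the PR structure over $R$.
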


\begin{proof}
Dans le cas non-ramifié, ce résultat est la proposition 2.1.9 de \cite{Moo}, et est prouvé en utilisant les anneaux de déformations universels. Nous allons plutôt utiliser la théorie de Grothendieck-Messing. Rappelons le résultat qui nous interresse,
\begin{theo}[Grothendieck-Messing, \cite{Mess} Theorem V.1.6]
Soit $S$ un schéma, $\mathcal I$ un faisceau d'ideaux localement nilpotents muni de puissances divisées qui définit un sous-schéma fermé $S_0$.
Soit $\mathcal{BT}(S)$ la catégorie des groupes $p$-divisibles sur $S$ et $\mathcal C(S)$ la catégorie formée des couples $(G_0,\Fil)$ où $G_0$ est un groupe $p$-divisible sur $S_0$ et $\Fil \subset \mathbb D(G_0)_{(S_0 \hookrightarrow S)}$ est une filtration admissible (i.e. localement facteur direct et qui relève $\omega_{G_0} \subset \mathbb D(G_0)_{(S_0 \to S_0)}$). Alors le foncteur,
\[ 
\begin{array}{ccc}
 \mathcal{BT}(S)  & \fleche  & \mathcal C(S)  \\
G  & \longmapsto   &   (G\times_S S_0,\omega_G \subset \mathbb D(G_0)_{(S_0 \hookrightarrow S)})
\end{array}
\]
est une anti-équivalence de catégories.
\end{theo}

Démontrons maintenant la proposition \ref{filt}. Soit $I$ l'unique idéal maximal de $R$, et supposons dans un premier temps que $I$ est de carré nul. Cet idéal est en particulier muni de puissances divisées, et soit $\mathcal{D}$ le cristal de Dieudonné (contravariant) de $G \times_{R} k$ évalué sur l'épaississement $R \to k$. Alors $\mathcal{D}$ est un $R \otimes_{\mathbb{Z}_p} O_L$-module libre de rang $h$ ; de plus comme $G$ est $\mu$-ordinaire, on a 
$$\mathcal{D} = \bigoplus_{j=1}^{r+1} \mathcal{D}_j$$
où $\mathcal{D}_j$ est le cristal de Dieudonné de $X_{\alpha_j}^{D_j - D_{j-1}}$ évalué sur l'épaississement $R \to k$ pour tout $1 \leq j \leq r+1$. Le module $\mathcal{D}$ a une action de $O_L$, et se décompose suivant les éléments de $\T$ en $\mathcal{D} = \oplus_{\tau \in \T} \mathcal{D}_\tau$. De même, on a la décomposition $\mathcal{D}_j = \oplus_{\tau \in \T} \mathcal{D}_{j,\tau}$ pour tout $j$. \\
Soient $\overline{\mathcal{D}_\tau}$ et $\overline{\mathcal{D}_{j,\tau}}$ les réductions de $\mathcal{D}_\tau$ et $\mathcal{D}_{j,\tau}$ modulo $I$ pour tout $\tau \in \T$. Soit $\overline{\Fil_\tau} \subset \overline{\mathcal{D}_\tau}$ la filtration de Hodge. On a 
$$\overline{\Fil_\tau} = \bigoplus_{j=1}^{r+1} \pi^{e-\alpha_{j,\tau}} \overline{\mathcal{D}_{j,\tau}}  $$
Comme le groupe $p$-divisible $G$ est défini sur $R$, on dispose de la filtration de Hodge $\Fil_\tau \subset \mathcal{D}_\tau$ relevant le module $\overline{\Fil_\tau}$. Le $R$-module $\Fil_\tau$ est un facteur direct de $\mathcal{D}_\tau$, pour tout $\tau \in \T$. D'après la théorie de Grothendieck-Messing, prouver l'existence de $G_r \subset G$ revient à prouver que le $R$-module $\Fil_\tau \cap \mathcal{D}_{r+1,\tau}$ est un facteur direct de $\mathcal{D}_{r+1,\tau}$ pour tout $\tau \in \T$. \\
Fixons un élément $\tau \in \T$. Comme $G$ satisfait la condition de Pappas-Rapoport, le module $\Fil_\tau$ contient $\pi^{e-\alpha_{r+1,\tau}} \mathcal{D}_\tau$. En effet, $\alpha_{r+1,\tau}$ est égal au nombre d'entiers $d_{\tau,i}$ ($1 \leq i \leq e$) égaux à $h$. Le module $\Fil_\tau$ est filtré en 
$$0 \subset \Fil_\tau^{[1]} \subset \dots \subset \Fil_\tau^{[e]} = \Fil_\tau$$
Le $R$-module $\Fil_\tau^{[\alpha_{r+1,\tau}]}$ est un facteur direct de $\mathcal{D}_\tau$ de rang $h \alpha_{r+1,\tau}$. De plus, il est contenu dans $\pi^{e-\alpha_{r+1,\tau}} \mathcal{D}_\tau$, qui est également un $R$-module libre de rang $h \alpha_{r+1,\tau}$. D'où l'égalité $\Fil_\tau^{[\alpha_{r+1,\tau}]} = \pi^{e-\alpha_{r+1,\tau}} \mathcal{D}_\tau$. \\
Montrons que $\Fil_\tau \cap \mathcal{D}_{r+1,\tau} = \pi^{e-\alpha_{r+1,\tau}} \mathcal{D}_{r+1,\tau}$. Quitte à quotienter $\mathcal{D}_\tau$ par $\pi^{e-\alpha_{r+1,\tau}} \mathcal{D}_\tau$, on se ramène au cas où $\alpha_{r+1,\tau} = 0$, et nous devons montrer que $\Fil_\tau \cap \mathcal{D}_{r+1,\tau} = 0$. Soit $e_1, \dots, e_l$ une base du $R$-module $\Fil_\tau$, et projetons ces éléments dans $\mathcal{D}_\tau / \mathcal{D}_{r+1,\tau}$. Comme la réduction modulo $I$ de cette famille est libre, elle est également libre sur $R$.  Soit 
$$x = \sum_{k=1}^l x_k e_k \in \Fil_\tau \cap \mathcal{D}_{r+1,\tau}$$
En projetant $x$ sur $\mathcal{D}_\tau / \mathcal{D}_{r+1,\tau}$, on voit que $x_1 = \dots = x_l = 0$, soit $x=0$. \\
Nous avons prouvé que pour tout $\tau \in \T$, $\Fil_\tau \cap \mathcal{D}_{r+1,\tau} = \pi^{e-\alpha_{r+1,\tau}} \mathcal{D}_\tau$. Cela prouve l'existence (et l'unicité) du groupe $p$-divisible $G_r$. En raisonnant par récurrence, en considérant $G_r$ au lieu de $G$, on en déduit l'existence et l'unicité de tous les $G_j$, $1 \leq j \leq r$. Nous avons également démontré que chacun des $X_{\alpha_j}^{D_j - D_{j-1}}$ était déformé trivialement, i.e. $G_j / G_{j-1} \simeq X_{\alpha_j}^{D_j - D_{j-1}} \times_k R$ pour $1 \leq j \leq r+1$. \\
Décrivons maintenant comment prouver le cas général, lorsque $I$ n'est plus supposé de carré nul. On peut trouver un entier $N \geq 1$, et des anneaux $R_1, \dots, R_N$ tels $R_1 = k$, $R_N = R$, et pour $1 \leq n \leq N-1$, il existe un morphisme surjectif $R_{n+1} \to R_n$, dont le noyau est de carré nul. En utilisant le même raisonnement que précédemment, on montre alors par récurrence sur $n$ qu'il existe un unique relèvement des groupes $p$-divisibles $G_j$ à $R_n$, tels que $G_j / G_{j-1}$ est déformé trivialement pour $1 \leq j \leq r+1$. Cela termine la preuve.
\end{proof}

\begin{rema}
La proposition précédente est fausse si on ne suppose pas que $G$ satisfait la condition de Pappas-Rapoport sur $R$. En effet, supposons que $L$ est totalement ramifié de degré $e=3$, et considérons la donnée $h=2$, $(d_1, d_2, d_3)=(0,1,2)$. Le groupe $p$-divisible $\mu$-ordinaire sur $k = \overline \FP$ pour cette donnée est $G = X_1 \times X_2$, o\`u  les polygones de Newton de $X_1$ et $X_2$ ont une pente respectivement égale \`a $2/3$ et $1/3$.
% sous les notations de la sous-section \ref{sect21}. C'est à dire que le module de Dieudonné est donné par $M = W(k)^6$,
%\[ V =
%\left(
%\begin{array}{cc}
% \left(
%\begin{array}{ccc}
% 0 & p  &   \\
%  & 0  & p  \\
%1  &   & 0  
%\end{array}
%\right) &   \\
%  &   \left(
%\begin{array}{ccc}
% 0 &   & p  \\
% 1 &  0 &   \\
%  & 1  &   0
%\end{array}
%\right)
%\end{array}
%\right)
%\quad \text{et}\quad  \pi =
%\left(
%\begin{array}{cc}
% \left(
%\begin{array}{ccc}
% 0 &   & p  \\
% 1 &  0 &   \\
%  & 1  &   0
%\end{array}
%\right)&   \\
%  &   \left(
%\begin{array}{ccc}
% 0 &   & p  \\
% 1 &  0 &   \\
%  & 1  &   0
%\end{array}
%\right)
%\end{array}
%\right)
%\]
Le cristal de Dieudonné de $G$ évalué sur $k$ est un $k \otimes_{\mathbb{Z}_p} O_L$-module libre de rang $2$. Soit $e_1, e_2$ la base naturelle de ce module correspondant \`a la décomposition $X_1 \times X_2$. Une base de la filtration de Hodge sur $k$ est alors $\overline{\Fil} = (\pi e_1,\pi^2 e_1,\pi^2 e_2)$. \\
Soit $k[\varepsilon]$ les nombres duaux. On peut alors considérer le relèvement admissible et stable par $O_L$ de la filtration de Hodge
\[ \Fil = (\pi e_1 + x \varepsilon \pi e_2 ,\pi^2 e_1,\pi^2 e_2+  y \varepsilon e_1), \quad x,y \dans k\]
Il correspond à un groupe $p$-divisible avec action de $O_L$ sur $k[\varepsilon]$, mais celui-ci n'a pas de filtration canonique si $x$ et $y$ sont non nuls. De plus, on vérifie facilement que ce groupe $p$-divisible ne satisfait pas de condition PR pour la donnée étudiée.
\end{rema}

\begin{rema}
Si les entiers $d_{\tau,i}$ sont tous égaux à $0$ ou $h$, alors $r=0$. La preuve de la proposition précédente implique alors que les déformations du groupe $p$-divisible $X_{\alpha_1}^h$ satisfaisant la condition de Pappas-Rapoport sont triviales.
\end{rema}

\begin{rema}
Si le corps $k$ n'est pas algébriquement clos, le groupe $G \times_R k$ n'est plus nécessairement isomorphe à $X^{ord}$, mais on a $G \times_R k \simeq \prod_{j=1}^{r+1} X_j$, où $X_j$ est un groupe $p$-divisible sur $k$ avec $X_j \times_k \overline{k} \simeq X_{\alpha_j}^{D_j - D_{j-1}}$ pour $1 \leq j \leq r+1$. Un argument de descente permet alors de montrer qu'il existe une filtration 
$$0=G_0 \subset G_1 \subset \dots \subset G_r \subset G_{r+1}=G  $$
telle que $ (G_j / G_{j-1}) \times_{R} k \simeq X_j$ pour $1 \leq j \leq r+1$. 
\end{rema}

%Bibliographie. Vous pouvez laisser ces lignes m√™me s'il n'y a pas de bibliographie : il vous suffira alors de ne pas utiliser BibTeX.
 \nocite{*}
\bibliographystyle{alpha-fr} % D'autres styles sont disponibles. Notez que les distributions LaTeX n'incluent g√©n√©ralement pas de styles de bibliographies francis√©s ; vous aurez donc des bibliographies en anglais.

\bibliography{biblio} % Remplacer "biblio" par le nom de votre fichier de r√©f√©rences bibliographiques.

\end{document}